\documentclass[en,oneside,bbfont,article]{amsart} 
\usepackage[lmargin = 30mm, rmargin = 30mm, bmargin = 25mm, tmargin=25mm]{geometry}
\usepackage[utf8]{inputenc}
\usepackage{amsmath,amssymb,amsthm}
\usepackage{geometry}
\usepackage{hyperref}
\usepackage{graphicx,float}
\usepackage{tikz}
\usepackage{dsfont,animate}
\usepackage{comment}
\usepackage{tablefootnote}
\usepackage{bbm,bm}
\usepackage{todonotes}
\usepackage{mleftright}

\providecommand{\examplename}{Example}

\newtheorem{theorem}{Theorem}[section]
\newtheorem{proposition}[theorem]{Proposition}

\newtheorem{lemma}[theorem]{Lemma}
\theoremstyle{remark}
\newtheorem{remarkx}[theorem]{Remark}
\newenvironment{remark}
  {\pushQED{\qed}\remarkx}
  {\popQED\endremarkx}
\theoremstyle{definition}
\newtheorem*{example*}{\protect\examplename}

\theoremstyle{plain}

\newtheorem*{assumption*}{Assumption}

\DeclareMathOperator{\sech}{sech}
\newcommand{\D}[1]{\mathop{\mathrm{d}#1}}
\newcommand{\R}{\mathbb{R}}
\newcommand{\conv}{\mathrm{conv}}
\newcommand\E{\mathds{E}}
\newcommand\p{\mathds{P}}

\newcommand{\Var}{\mathrm{Var}}

\newcommand\N{\mathbb{N}}

\newcommand\1{\mathds{1}}

\newcommand{\sfr}{\mathsf{r}}
\newcommand{\sfP}{\mathsf{P}}
\newcommand{\sfA}{\mathsf{A}}
\newcommand{\sfD}{\mathsf{D}}
\newcommand{\sfR}{\mathsf{R}}
\newcommand{\rmH}{\mathcal{H}}

\usepackage{xcolor}

\usepackage{bm}
\usepackage{mathtools}
\mathtoolsset{showonlyrefs}

\usepackage[backend=biber,style=ieee,firstinits=true,citestyle=numeric,sorting=nyt,maxbibnames=99,dashed=false,sortcites=true]{biblatex}
\renewbibmacro{in:}{}
\appto{\bibsetup}{\sloppy}
\usepackage[foot]{amsaddr}

\title[Bounds on the inradius and inverse inradius of the planar Brownian convex hull]{Improved bounds on the inradius and inverse inradius process of the convex hull of planar Brownian motion}
\author{David Kramer-Bang$^{\dag}$ \& Hugo Panzo$^{*}$ \& Stjepan \v{S}ebek$^{\ddag}$}

\address{$^\dag$Department of Mathematics, Aarhus University, DK}
\address{$^*$Department of Mathematics and Statistics, Saint Louis University, USA}
\address{$^\ddag$University of Zagreb Faculty of Electrical Engineering and Computing, HR}

\email{bang@math.au.dk}
\email{hugo.panzo@slu.edu}
\email{stjepan.sebek@fer.unizg.hr}

\begin{document}

\begin{abstract}
        We study the inradius and inverse inradius of the convex hull of standard planar Brownian motion. Our main goals are to improve the existing bounds on the expected values of these quantities and to establish variance bounds. The main contributions of the paper, however, are the following intermediate results we obtain in order to compute our bounds. We find closed-form expressions for the expected values of the minimum and maximum of the ranges of two independent one-dimensional Brownian motions. Besides improving the upper bound on the expected inradius, this result also gives an explicit expression for a quantity that has been investigated repeatedly in the literature and for which only a numerical evaluation has been known so far. Furthermore, we study the exit time of planar Brownian motion from an unbounded region that we call the \emph{hourglass domain} and compute its first two moments. This result leads to a nearly fivefold improvement of the previous upper bound on the expected inverse inradius and plays an essential role in establishing the corresponding upper bound on the variance.
\end{abstract}

\subjclass[2020]{Primary 60D05, 60J65; Secondary 52A10}
\keywords{Brownian motion, convex hull, exit time, inradius.}

\maketitle

\section{Introduction}

Let $\bm W = (\bm W(t) : t\ge 0)$  denote planar Brownian motion. More precisely, $\bm W(t)= (W_1(t), W_2(t))$, where the two coordinates $W_i(t)$, $i=1,2$, are independent, one-dimensional Brownian motions. Typically, $\bm W$ will start at the origin, otherwise the starting point $(x, y)\neq (0, 0)$ will be specified by the path measure $\p_{(x, y)}$ or expectation operator $\E_{(x, y)}$. For a set $\mathcal{A}\subset \R^2$, denote by $\conv \mathcal{A}$ the convex hull spanned by the set $\mathcal{A}$, i.e.,\ the smallest convex subset of $\R^2$ containing $\mathcal{A}$. Let $\rmH(t) = \conv \{\bm W(s): s\in [0,t]\}$ be the convex hull spanned by a path of $\bm W$ run up to time $t$. The convex hull of planar Brownian motion has been investigated for many years. Questions concerning the smoothness properties of its boundary were considered already by P.\ L\'{e}vy in~\cite{Levy}. He conjectured that the boundary of $\rmH(t)$ should be of class $C^1$. This conjecture was proved in~\cite{ElBachir_PhD_thesis}; see also~\cite{Cranston}.

In this paper, we are concerned with questions about a particular geometric functional of the planar Brownian convex hull, namely its inradius, which is the radius of the largest circle contained in $\rmH(t)$. Different geometric functionals of the planar Brownian convex hull have already been studied in the literature. Let $\sfP(t)$ denote the perimeter of $\rmH(t)$. Throughout we (usually) abandon the time index whenever $t=1$, that is we write $\sfP$ for $\sfP(1)$, and similarly for other functionals. The expected value of $\sfP(t)$ was computed in~\cite{Letac}; see also~\cite{Majumdar} and~\cite{MR2551685}. It holds that $\E[\sfP] = \sqrt{8\pi}$. Let $\sfA(t)$ denote the area of $\rmH(t)$. In~\cite{ElBachir_PhD_thesis} it was computed that $\E[\sfA] = \pi/2$. These results were later generalized to the standard planar Brownian bridge~\cite{goldman, Majumdar}, the union of independent standard planar Brownian motions~\cite{Majumdar}, the union of independent standard planar Brownian bridges~\cite{Majumdar}, and various combinations of independent standard planar Brownian motions and bridges~\cite{sebek}. 

When it comes to geometric functionals of the convex hull of standard planar Brownian motion, perimeter and area are the only ones for which explicit expressions for their expectation were computed. Let $\sfD(t)$ denote the diameter of $\rmH(t)$. The authors in~\cite{McRedmond_Xu} found lower and upper bounds for $\E[\sfD]$. Later, the lower bound was improved in~\cite{Jovalekic, Garbit-Raschel}. In~\cite{CPS}, the authors studied the inradius of $\rmH(t)$ as well as its circumradius, which is the radius of the smallest circle that contains $\rmH(t)$.

The authors of~\cite{CPS} also initiated the study of the inverse processes of all the mentioned geometric functionals. Using the notation
\begin{equation}\label{eq:gen_inv_proc}
	\Theta^X(y) = \inf\{t\ge 0 : X(t) > y\}, \quad y \ge 0,
\end{equation}
for the inverse process of a one-dimensional non-decreasing stochastic process $X(t)$, they found upper and lower bounds for the expected value of quantities $\Theta^{\sfP}, \Theta^{\sfA}, \Theta^{\sfD}, \Theta^{\sfR}$ and $\Theta^{\sfr}$, where we again drop the dependence on $y$ in the case $y = 1$.

The novelty of the present paper is twofold. First, we evaluate explicitly the expected value of the minimum of two independent ranges of standard one-dimensional Brownian motions. We do this by first expressing the expected minimum as the integral of the square of Feller’s range-tail expansion from \cite{Feller1951}. The obtained exact expression enables us to improve the upper bound on the expected inradius of the planar Brownian convex hull from \cite{CPS}, and to provide an explicit formula for the expected value of the maximum of two independent ranges of standard one-dimensional Brownian motions which is an object studied in \cite{McRedmond_Xu, Garbit-Raschel, Jovalekic} and for which only numerical approximations are known. We then repeat this calculation for the second moment of the minimum and obtain an explicit formula containing an infinite series. This formula is then used to get an upper bound on the variance of the inradius of the planar Brownian convex hull. Second, we study the exit time of planar Brownian motion from the interior of a carefully calibrated hyperbola, the so-called \emph{hourglass domain}. We find the first and second moments of this exit time, which proves to be a strong tool that allows us to significantly improve the upper bound on the expected inverse inradius of the planar Brownian convex hull from \cite{CPS}, and to provide a variance upper bound for the same object.

\subsection{Main contributions}
Let us denote by $\sfr(t)$ the inradius of $\rmH(t)$. In~\cite{CPS} it was shown that
\begin{equation*}
	0.393 \le \E[\sfr] \le 0.7072.
\end{equation*}
In this paper we improve on the upper bound for $\E[\sfr]$. In order to state this result, we first introduce some notation. Denote by
\begin{equation}\label{eq_riemann_zeta_function_def}
    \zeta(s)=\sum_{n=1}^\infty n^{-s}
\end{equation}
the Riemann zeta function, and by
\begin{equation}\label{eq:defn_beta_diri}
    \beta(s)=\sum_{k=0}^\infty (-1)^k(2k+1)^{-s}
\end{equation} 
the Dirichlet beta function. We are now ready to state our main result related to the improved upper bound for $\E[\sfr]$.
\begin{theorem}\label{thm:ub_exp_inr}Let $\zeta(s)$ be the Riemann zeta function from~\eqref{eq_riemann_zeta_function_def} and $\beta(s)$ the Dirichlet beta function from~\eqref{eq:defn_beta_diri}. Then, for the inradius $\sfr$ of the standard planar Brownian motion $\bm W$, it holds that
\[
\E[\sfr] \le \frac{2(4-\sqrt2)}{\pi^{5/2}}
\zeta \mleft(\frac32\mright)\beta \mleft(\frac32\mright)\le 0.667652.
\]
\end{theorem}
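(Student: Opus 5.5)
The plan is to bound the inradius by a geometric quantity whose law is explicitly known. Let $R_i=\max_{s\le 1}W_i(s)-\min_{s\le 1}W_i(s)$, $i=1,2$, be the ranges of the two coordinates. These are independent and each has the law of the range of a standard one-dimensional Brownian motion. The orthogonal projection of $\rmH(1)$ onto the $i$-th coordinate axis is an interval of length $R_i$, so the width of $\rmH(1)$ in direction $e_i$ equals $R_i$. Any disc of radius $\rho$ inside $\rmH(1)$ projects onto an interval of length $2\rho$, so $2\sfr\le R_i$ for each $i$. Hence
\[
\sfr\le \tfrac12\min(R_1,R_2), \qquad \E[\sfr]\le \tfrac12\,\E[\min(R_1,R_2)].
\]
The whole task is therefore to compute $\E[\min(R_1,R_2)]$ in closed form.

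By independence and the tail formula for nonnegative variables,
\[
\E[\min(R_1,R_2)]=\int_0^\infty \p(R>x)^2\,\D x .
\]
For the tail I would use Feller's expansion of the range distribution from \cite{Feller1951}. One version is the rapidly convergent Gaussian series: with density $f(x)=\frac{8}{\sqrt{2\pi}}\sum_{k\ge1}(-1)^{k-1}k^2e^{-k^2x^2/2}$, integration gives a series for $\p(R>x)$ whose terms are combinations of $e^{-k^2x^2/2}$ and Gaussian tails. The other version is the theta-type series suited to small $x$; the two are related by Jacobi's transformation. To avoid squaring Gaussian tails, I would integrate by parts to write
\[
\E[\min]=2\int_0^\infty x f(x)\,\p(R>x)\,\D x .
\]
Inserting the two series then produces a double sum. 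Its terms are elementary Gaussian integrals of the form $\int_0^\infty x\,e^{-(j^2+k^2)x^2/2}(\cdots)\,\D x$, which yield powers of $(j^2+k^2)^{-1/2}$ and $(j^2+k^2)^{-3/2}$ with signs $(-1)^{j+k}$ and weights $j^2$.

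The final step is to identify the resulting lattice sums. The key identity is that the Dirichlet series of $r_2(n)=4\sum_{d\mid n}\chi_{-4}(d)$ gives
\[
\sum_{(j,k)\ne(0,0)}(j^2+k^2)^{-s}=4\,\zeta(s)\beta(s).
\]
The sign pattern $(-1)^{j+k}$ and the restriction to $j,k\ge1$ are handled by splitting into parity classes: even–even, odd–odd and mixed. Each class is a rescaled copy of the full lattice sum, for instance $(j,k)\mapsto(2j,2k)$ contributes $2^{-s}$ and $(j+k,j-k)$ contributes $2^{-s/2}$. With $s=3/2$ this is presumably where the factor $4-\sqrt2$ comes from. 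Terms carrying a weight $j^2$ are symmetrised over $j\leftrightarrow k$, which turns $j^2$ into $\tfrac12(j^2+k^2)$ and lowers the exponent by one, so every term reduces to the $s=3/2$ sum. Combining the pieces and halving gives the constant $\frac{2(4-\sqrt2)}{\pi^{5/2}}\zeta(3/2)\beta(3/2)$. The numerical bound then follows from $\zeta(3/2)\approx2.612375$ and $\beta(3/2)\approx0.864503$.

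The main obstacle is the analytic bookkeeping in the middle step. The alternating series are only conditionally convergent near $x=0$, and the lattice sums at $s=3/2$ are not absolutely summable once the alternating signs are removed, so the exchange of sum and integral has to be justified. I would first try to use the Gaussian-decaying series for $x$ large and its Jacobi-transformed version for $x$ small, splitting the integral at a point where both converge uniformly. If that fails, I would introduce a damping parameter, for example computing $\int_0^\infty e^{-\varepsilon x^2}\p(R>x)^2\,\D x$, and let $\varepsilon\downarrow0$ using Abel summation of the resulting lattice sums.
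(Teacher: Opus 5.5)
Your reduction is correct and is the paper's. The projection argument gives $\sfr\le\frac12\min\{R_1,R_2\}$, then $\E[\min\{R_1,R_2\}]=\int_0^\infty \p(R>x)^2\,\D x$, and then Feller's Gaussian series. The gap is in the final step.

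If you carry out your Gaussian integrals with $f(x)=8\sum_j(-1)^{j-1}j^2\phi(jx)$ and $\p(R>x)=8\sum_k(-1)^{k-1}k\,\overline\Phi(kx)$, you get $\int_0^\infty x\phi(jx)\overline\Phi(kx)\,\D x=\frac{1}{2\sqrt{2\pi}\,j^2}\bigl(1-k(j^2+k^2)^{-1/2}\bigr)$. After your $j\leftrightarrow k$ symmetrisation the formal result is
\[
\E[\min\{R_1,R_2\}]\;=\;\frac{32}{\sqrt{2\pi}}\sum_{j,k\ge1}(-1)^{j+k}\Bigl(j+k-\sqrt{j^2+k^2}\Bigr),
\]
which is the undamped form of the paper's $64\lim_{r\to1^-}T(r)$. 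Symmetrising a weight $k^2$ sends $(j^2+k^2)^{-s}$ to $(j^2+k^2)^{-(s-1)}$. So you land at the lattice sum with $s=-\tfrac12$, plus the non-lattice piece $\sum(-1)^{j+k}(j+k)$, and never at $s=\tfrac32$. To reach $\tfrac32$ you would need $(j^2+k^2)^{-5/2}$ terms, and these do not occur.

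This double series diverges: along the diagonal its terms grow like $(2-\sqrt2)k$. It therefore has to be regularised in the indices, as the paper does by replacing $S$ with $S_r$ (coefficients damped by $r^{n^2}$) and proving $\int S_r^2\to\int S^2$ by dominated convergence. Your $e^{-\varepsilon x^2}$ only damps large $x$, whereas the divergence comes from $x\to0$, so for every $\varepsilon>0$ the double series still diverges. Conversely, the obstacle you name is not real: $\sum_{(j,k)\ne(0,0)}(j^2+k^2)^{-3/2}$ converges absolutely.

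Your plan is therefore missing three things:
\begin{enumerate}
\item The Abel limit $\sum(-1)^{j+k}(j+k)r^{j^2+k^2}\to\tfrac14$.
\item The identification of the Abel limit of $\sum(-1)^{j+k}\sqrt{j^2+k^2}\,r^{j^2+k^2}$ with the analytically continued value $E(-\tfrac12)$, where $E(s)=\sum_{j,k\ge1}(-1)^{j+k}(j^2+k^2)^{-s}=\eta(2s)-\eta(s)\beta(s)$. The paper does this via a Mellin--Barnes integral and the converse mapping theorem, using polynomial growth of $\zeta$ and $\beta$ in vertical strips.
\item The functional equations $\zeta(-\tfrac12)=-\zeta(\tfrac32)/(4\pi)$ and $\beta(-\tfrac12)=\beta(\tfrac32)/\pi$. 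On this route, that is how $\zeta(\tfrac32)\beta(\tfrac32)$ enters.
\end{enumerate}

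The factor $4-\sqrt2$ comes from $\eta(-\tfrac12)=(1-2^{3/2})\zeta(-\tfrac12)$, with $\eta(-1)=\tfrac14$ cancelling the $\tfrac14$ from the first item. It cannot come from your parity count at $s=\tfrac32$, where the twisted sum carries the factor $1-2^{-1/2}$. Your rescaling factors are also off: they should be $4^{-s}$ for $(2j,2k)$ and $2^{-s}$ for $(j+k,j-k)$.

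If you want to reach $s=\tfrac32$ directly, you must work with the theta-type (inverse-range) series throughout. That leads to odd--odd lattice sums, for example $\E[\min\{R_1,R_2\}]=\frac{32\sqrt2}{\pi^{5/2}}\sum_{a,b\ge1\ \mathrm{odd}}(a^2+b^2)^{-3/2}$. That route needs its own regularisation and pole-cancellation argument, and it is not what you describe.
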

The result from Theorem~\ref{thm:ub_exp_inr} is obtained by using the inequality $\sfr\le \min\{R_1,R_2\}/2$ a.s., where $R_i$ is the range of the $i$-th coordinate $W_i$ (see Lemma~\ref{lem:r_bound_range}). Taking expectations and using the fact that \(R_1\) and \(R_2\) are independent and identically distributed gives
\[
\E[\sfr]\le \frac12  \E[\min\{R_1,R_2\}]
=\frac12\int_0^\infty \p(R\ge x)^2\D x,
\]
where \(R\) is the range of a standard one-dimensional Brownian motion. Feller's expansion~\cite[Eq.~(3.7) \&~(3.8)]{Feller1951} for \(\p(R\ge x)\) then reduces the problem to an explicit one-dimensional integral, and Proposition~\ref{prop:square-tail-integral} of the present paper evaluates this integral in closed form which provides the elegant formula
\begin{equation}\label{eq:explicit_expression_exp_min_range}
    \E[\min\{R_1,R_2\}] = \frac{4(4-\sqrt2)}{\pi^{5/2}}
\zeta \mleft(\frac32\mright)\beta \mleft(\frac32\mright) \approx 1.33530.
\end{equation}
\begin{remark}
    As a consequence of this result, we easily get a closed form expression for the $\E[\max\{R_1, R_2\}]$. In~\cite{McRedmond_Xu} the authors write that it seems hard to explicitly compute $\E[\max\{R_1, R_2\}]$. This expectation then again appears in~\cite{Garbit-Raschel} and~\cite{Jovalekic} as a lower bound for the expected value of the diameter of the planar Brownian convex hull. In both~\cite{Garbit-Raschel} and~\cite{Jovalekic} the $\E[\max\{R_1, R_2\}]$ is evaluated only numerically. We first recall a trivial observation that
    \begin{equation*}
        R_1 + R_2 = \min\{R_1, R_2\} + \max\{R_1, R_2\}.
    \end{equation*}
    Taking expectations of both sides, using~\eqref{eq:explicit_expression_exp_min_range}, and the precise value of the expected range of a standard one-dimensional Brownian motion (see~\cite[Eq.~(1.4)]{Feller1951}), we get
    \begin{equation*}
        \E[\max\{R_1, R_2\}] = 2\sqrt{\frac{8}{\pi}} - \frac{4(4-\sqrt2)}{\pi^{5/2}}\zeta \mleft(\frac32\mright)\beta \mleft(\frac32\mright) \approx 1.85624. \qedhere
    \end{equation*}
\end{remark}

The improved upper bound for the expected value of the inradius of the planar Brownian convex hull from Theorem~\ref{thm:ub_exp_inr} is not easily sharpened within the same framework because the argument already uses essentially all information available from the two coordinate ranges. In particular, it improves on the cruder area bound \(\sfr\le \sqrt{\sfA/\pi}\) by exploiting two independent one-dimensional constraints simultaneously, rather than a single scalar observable. To do substantially better, one would need geometric information beyond the side lengths of the enclosing rectangle, for example finer information about the shape of the convex hull or about correlations between different extremal directions. Such information is much harder to encode analytically, whereas the present method succeeds precisely because the coordinate ranges are explicit, independent, and admit Feller's series expansion.

Our next set of bounds deal with the inverse inradius process defined in \eqref{eq:gen_inv_proc}. This process was first studied in \cite{CPS}, where the authors showed that
\begin{equation*}
	2 \le \E[\Theta^{\sfr}] \le 83.4.
\end{equation*}
We improve on both of these bounds in the following result.
\begin{theorem}\label{thm:up_low_exp_inv_inr}
    It holds that
    \begin{equation*}
        2.61378 \le \E[\Theta^{\sfr}] \le 18.7460.
    \end{equation*}
\end{theorem}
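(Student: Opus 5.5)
The lower and upper bounds are proved separately. The lower bound comes from the rectangle inequality $\sfr\le \min\{R_1,R_2\}/2$ of Lemma~\ref{lem:r_bound_range}. The upper bound comes from a multi-stage strong Markov argument whose key ingredient is the exit time from the hourglass domain.

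\emph{Lower bound.} Since $\sfr(t)\le \min\{R_1(t),R_2(t)\}/2$ for all $t$, the inradius cannot exceed $1$ before both coordinate ranges exceed $2$. Hence $\Theta^{\sfr}\ge \max\{\tau_1,\tau_2\}$, where $\tau_i=\inf\{t: R_i(t)>2\}$ are independent copies of the first time the range of a one-dimensional Brownian motion reaches $2$. The law of $\tau_i$ is classical: its Laplace transform is $\E[e^{-\lambda \tau}]=\sech^2\bigl(a\sqrt{\lambda/2}\bigr)$ with $a=2$, and $\E[\tau]=a^2/2=2$, which recovers the old bound from \cite{CPS}. I would then compute
\[
\E[\max\{\tau_1,\tau_2\}]=\int_0^\infty \bigl(1-\p(\tau\le t)^2\bigr)\D t .
\]
To do so, I would invert the Laplace transform into a series (a theta-type expansion, analogous to Feller's expansion for the range tail), square it, and integrate term by term. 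This yields an explicit, rapidly convergent series, which evaluates numerically to $2.61378$.

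\emph{Upper bound.} By Brownian scaling, $\Theta^{\sfr}(y)\overset{d}{=}y^2\Theta^{\sfr}$, so constants can be calibrated freely. The plan has three stages.
\begin{enumerate}
\item Run $\bm W$ until the time $T_1$ at which $|\bm W|$ first reaches a level $a$. At that moment the hull contains a segment $[0,P]$ with $|P|=a$, and $\E[T_1]=a^2/2$.
\item Starting from $P$, with the direction of the segment fixed, run until the path reaches a point $Q$ for which $\conv\{0,P,Q\}$ already contains a disc of radius $1$, on either side of the line through the segment. The set of starting points from which this has not yet happened is contained in an unbounded region bounded by two branches of a hyperbola adapted to the segment; this is the hourglass domain.
\item If necessary, add a further exit from a similar domain.
\end{enumerate}
The strong Markov property then gives $\E[\Theta^{\sfr}]\le \E[T_1]+\sup_{z}\E_z[\text{exit time of hourglass}]$, possibly plus a further such term. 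The parameters ($a$ and the hyperbola's axes) are then optimized numerically to reach $18.7460$.

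\emph{The exit-time computation.} This is why a hyperbola is chosen. For the domain $\{x^2/\alpha^2-y^2/\gamma^2<1\}$ with $\gamma>\alpha$, the quadratic
\[
u(x,y)=c\bigl(1-x^2/\alpha^2+y^2/\gamma^2\bigr),
\]
with $c$ chosen so that $\tfrac12\Delta u=-1$, vanishes on the boundary. A localization and martingale argument, using $\gamma>\alpha$ to guarantee that the exit time is integrable, then identifies $u$ as the expected exit time. The second moment follows in the same way by solving $\tfrac12\Delta v=-u$ with a quartic polynomial.

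\emph{Main obstacle.} I expect the hardest step to be the geometric calibration. One must show rigorously that any exit point of the hourglass domain, combined with the existing segment, forces inradius at least $1$. One must also choose the domain so that $\gamma>\alpha$ holds; otherwise the expected exit time is infinite. Balancing the size of the hourglass against $\E[T_1]$ is what determines the final constant.
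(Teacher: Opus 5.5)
Your lower bound is correct and is the paper's argument: $\Theta^{\sfr}\ge\max\{\Theta_1(2),\Theta_2(2)\}\stackrel{d}{=}4\max\{\Theta_1,\Theta_2\}$. The paper evaluates the mean as $4\bigl(1-\E[\min\{\Theta_1,\Theta_2\}]\bigr)$ via \eqref{eq:inv_range_first_mom} instead of inverting the Laplace transform.

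The upper bound has a genuine gap: with your first stage, no hourglass can deliver $18.7460$. Place the hourglass as in the paper, centred at the midpoint of a segment of length $2a$ with transverse axis perpendicular to it. Then your second stage coincides with the paper's, and the only difference is the price of the segment. Exiting the disc of radius $2a$ costs $\E[T_1]=2a^2$. The paper instead stops at $T_a=\min\{\Theta_1(2a),\Theta_2(2a)\}$, the first time \emph{either} coordinate range reaches $2a$. At that time the hull already contains a segment of length $2a$ ending at the current position, and $\E[T_a]=4a^2\,\E[\min\{\Theta_1,\Theta_2\}]\approx 1.386\,a^2$. The excess of about $0.614\,a^2$ cannot be optimized away. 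Feasibility forces two constraints:
\begin{itemize}
\item $a^2(b-2)\ge b$, since the triangle with apex $(b,0)$ must have inradius at least $1$;
\item $m^2<a^2-1$, since the leading coefficient $a^2-1-m^2$ of $Q_{m,b,a}$ must be positive (geometrically, the inradius along the branch tends to $a/\sqrt{1+m^2}$).
\end{itemize}
Hence, with $s=a^2$ (so $b\ge 2s/(s-1)$ and $s>2$),
\[
2a^2+\frac{m^2b^2+a^2}{m^2-1}=2s+b^2+\frac{b^2+s}{m^2-1}>2s+\frac{4s^2}{(s-1)(s-2)}+\frac{s}{s-2}\ge 19.97 .
\]
The missing idea is this cheaper first stage built from the inverse range processes. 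With it the objective becomes $\Psi_1$ in \eqref{eq:defn_object_funct_mean_inradi}, and $m=1.908007$, $a=2.484217$, $b=2.386748$ give $18.7460$.

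Two further points need repair.

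First, $\sup_z\E_z[\tau_{m,b}]$ over the hourglass is $+\infty$, because $\E_{(0,y)}[\tau_{m,b}]=(m^2b^2+y^2)/(m^2-1)$. Your displayed inequality is therefore vacuous. As in Proposition~\ref{prop:two_stage_inradius_upper_bound}, you should recentre and rotate at the end of the first stage. By the strong Markov property and rotation invariance, the second stage then always starts from the endpoint $(0,a)$. It contributes exactly $\E_{(0,a)}[\tau_{m,b}]=(m^2b^2+a^2)/(m^2-1)$.

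Second, you flag the geometric calibration as the main obstacle but do not carry it out. In the paper it is the explicit condition $Q_{m,b,a}(x)\ge 0$ for all $x\ge b$. This condition places the branch $y=m\sqrt{x^2-b^2}$ below the curve $y=\sqrt{H(x)}$, on which the triangle $ABC$ has inradius exactly $1$. Since $|AC|+|BC|$ is nondecreasing in the height of $C$, every exit point then gives inradius at least $1$. The chosen parameters must be checked against this condition. An unspecified third stage does not substitute for either of these steps.
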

While the lower bound follows relatively easily, again using the bounding rectangle from Lemma~\ref{lem:r_bound_range}, the proof of the upper bound uses a refinement of the two-stage construction introduced in~\cite{CPS} and is much more involved. As already mentioned, this construction relies on exact formulas for the moments of the exit time of planar Brownian motion from the interior of a carefully calibrated hyperbola, i.e.\ the hourglass domain. In principle, these exit time moments, as functions of the starting position, can be calculated using the generalized Dynkin's formula. This amounts to recursively solving Poisson’s equation on the interior of the domain with Dirichlet boundary conditions; see~\cite[\S 2]{Dynkin_lemma}. However, the unbounded nature of the hourglass domain complicates matters, so we instead opt for a first-principles approach that uses It\^{o}'s lemma.

Next we give upper and lower bounds on the variances of the inradius and its inverse process. We stress here that the lower bounds we obtained should be treated more as explicit rigorous positive bounds, and not so much as sharp estimates. The precise result for the variance of the inradius is as follows.
\begin{theorem}\label{thm:up_low_var_inr}
    It holds that
    \begin{equation*}
        2.7 \cdot 10^{-6} \le \Var(\sfr) \le 0.315520.
    \end{equation*}
\end{theorem}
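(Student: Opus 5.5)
The upper bound will come from $\Var(\sfr)\le \E[\sfr^2]-(\text{lower bound on }\E[\sfr])^2$, and the lower bound from an anti-concentration estimate.

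For the upper bound I would use the almost sure inequality $\sfr\le \min\{R_1,R_2\}/2$ of Lemma~\ref{lem:r_bound_range}. This gives $\E[\sfr^2]\le \tfrac14\E[\min\{R_1,R_2\}^2]$. By independence,
\[
\E[\min\{R_1,R_2\}^2]=\int_0^\infty 2x\,\p(R\ge x)^2\D x .
\]
I would insert Feller's series for $\p(R\ge x)$, square it, and integrate term by term, exactly as in Proposition~\ref{prop:square-tail-integral} but with the extra factor $2x$. This produces a double series of Gaussian-type integrals, which I would either sum in closed form or evaluate numerically with a rigorous tail estimate. For the subtracted term I would take the known lower bound $\E[\sfr]\ge 0.393$ from \cite{CPS}, so that $\Var(\sfr)\le \tfrac14\E[\min\{R_1,R_2\}^2]-0.393^2$. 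I expect this to land near $0.3155$. The main technical obstacle is controlling the double series: it converges slowly for small $x$, so I would split the integral and use the alternative theta-function form of the tail near $0$ (Feller's second expansion).

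For the lower bound, since only an explicit positive number is required, I would use the inequality
\[
\Var(\sfr)\ge (b-a)^2\,\p(\sfr\le a)\,\p(\sfr\ge b)\qquad\text{for any } a<b .
\]
To bound $\p(\sfr\le a)$ from below, I would use $\sfr\le \min\{R_1,R_2\}/2\le R_1/2$ together with Feller's explicit law of $R$, so that $\p(\sfr\le a)\ge \p(R\le 2a)$. To bound $\p(\sfr\ge b)$ from below, I would exhibit an event forcing a large inscribed disc. For example, require the path to visit in turn three small balls placed at the vertices of a large equilateral triangle, without ever leaving the triangle's neighbourhood; the convex hull then contains a smaller triangle with a known inradius. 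The probability of this event is bounded below using tube or exit estimates for Brownian motion, such as the probability of staying in a strip while travelling a given distance, computed from the exit-time distribution of a one-dimensional interval. Multiplying these crude explicit bounds and optimizing over $a$, $b$ and the size of the triangle should give a positive number of order $10^{-6}$.
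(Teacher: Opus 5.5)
Your upper bound is the paper's argument, $\Var(\sfr)\le\tfrac14\E[\min\{R_1,R_2\}^2]-0.393^2$. Only the evaluation of $\E[\min\{R_1,R_2\}^2]$ differs.

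\begin{itemize}
\item \textbf{Your route.} Squaring Feller's tail series against $2x$ is not summable term by term, because the diagonal terms $64n^2\int_0^\infty 2x\,\overline\Phi(nx)^2\D x$ do not depend on $n$. So your splitting plus rigorous numerics is genuinely needed, accurate to about $10^{-6}$ in $\E[\min\{R_1,R_2\}^2]$ to certify $0.315520$.
\item \textbf{The paper's route.} It writes $\E[\min\{R_1,R_2\}^2]=2\E[R^2]-2\int_0^\infty x^2f_R(x)F_R(x)\D x$ and pairs Feller's density series with the dual theta series for $F_R$. Every cross term is then an explicit $\int x^k e^{-ax^2-b/x^2}\D x$. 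The result is $8\log2-64\sum_m A_m$ with $A_m>0$, so any truncation is automatically a rigorous upper bound.
\end{itemize}

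The lower bound has a genuine gap. The inequality $\Var(\sfr)\ge(b-a)^2\p(\sfr\le a)\p(\sfr\ge b)$ is fine. But everything rests on an explicit lower bound for $\p(\sfr\ge b)$ that you never quantify. Your claim that the result is of order $10^{-6}$ is unsupported, and with the estimates you describe it falls well short of $2.7\cdot10^{-6}$.

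\begin{itemize}
\item $F_R$ is tiny at small arguments ($F_R(0.8)\approx6\cdot10^{-3}$, $F_R(1)\approx0.063$), so $a$ cannot be much below $0.5$, and hence $b$ must exceed $0.5$.
\item For example, $a=0.5$, $b=0.6$ with your bound $\p(\sfr\le a)\ge F_R(2a)$ requires $\p(\sfr\ge0.6)\ge4\cdot10^{-3}$.
\item Forcing your equilateral triangle of inradius $b$ means travelling two sides of length $2\sqrt3\,b$ in unit time. Heuristically, at the Gaussian level this already costs about $e^{-24b^2}\approx2\cdot10^{-4}$ at $b=0.6$.
\item That is before the further exponential losses from small target balls and tube confinement. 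A strip of half-width $\epsilon$ held for time $t$ costs about $e^{-\pi^2t/(8\epsilon^2)}$.
\end{itemize}

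The missing idea is that no upper-tail event is needed. You already use $\E[\sfr]\ge0.393$, and for any $a<0.393$,
\[
\Var(\sfr)\ge\E\bigl[(\sfr-\E\sfr)^2\1_{\{\sfr\le a\}}\bigr]\ge(0.393-a)^2\,\p(\sfr\le a)\ge(0.393-a)^2\bigl[1-(1-F_R(2a))^2\bigr].
\]
This is the paper's Chebyshev step with $a=0.9\cdot0.393$. Keeping only the first (positive) term of the series for $F_R$ gives about $2.704\cdot10^{-6}$.

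Both coordinates are needed here, via $\p(\min\{R_1,R_2\}\le 2a)$. With your one-coordinate bound $\p(\sfr\le a)\ge F_R(2a)$, this argument peaks at about $1.35\cdot10^{-6}$, which is below the stated constant.
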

The analogous result for the variance of the inverse inradius process is formulated in the following theorem.
\begin{theorem}\label{thm:up_low_var_inv_inr}
    It holds that
    \begin{equation*}
        5.2 \cdot 10^{-11} \le \Var(\Theta^{\sfr}) \le 709.599.
    \end{equation*}
\end{theorem}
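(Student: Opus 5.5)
The plan is to prove the two bounds separately. The upper bound follows from the decomposition $\Var(\Theta^{\sfr}) \le \E[(\Theta^{\sfr})^2]$, and the lower bound from an anti-concentration argument.

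\textbf{Upper bound.} We bound the second moment $\E[(\Theta^{\sfr})^2]$ by reusing the two-stage construction behind the upper bound of Theorem~\ref{thm:up_low_exp_inv_inr}. In the first stage, $\bm W$ runs until it exits a disc, or segment-type region, that fixes a first extremal point. In the second stage, started from that point, the motion runs until it exits the hourglass domain. Upon that exit, the convex hull contains a triangle or quadrilateral whose inradius exceeds $1$. This gives $\Theta^{\sfr} \le \tau_1 + \tau_2$, where $\tau_1$ is the first-stage exit time and $\tau_2$ is the hourglass exit time started from $\bm W(\tau_1)$. If the construction fails, it is restarted with fresh independent trials.

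Next we apply the strong Markov property, $(a+b)^2 = a^2 + 2ab + b^2$, and the exit-time moments. The first moment of the disc exit time is $\rho^2/2$, and its second moment is $\E[\tau^2] = 5\rho^4/24$ for a planar disc of radius $\rho$. The first two moments of the hourglass exit time come from the earlier computation via It\^o's lemma. Since these depend on the starting point, we integrate them against the exit distribution of stage one, using its uniform or explicit law.

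If a geometric number of restarts is needed, with success probability $q$ per trial, we use Wald-type identities for the second moment of a random sum of i.i.d.\ cycle lengths. The identity is $\E[S_N^2] = \E[N]\Var(C) + \E[N^2](\E C)^2$, combined with the first-moment bound $18.7460$.

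\textbf{Lower bound.} We need an explicit positive lower bound on the variance, and it does not need to be sharp. We use $\Var(X) \ge \p(X\le a)\,\p(X\ge b)\,(b-a)^2/\ldots$. More concretely, if $\p(X\le a)\ge p$ and $\p(X\ge b)\ge p'$ with $a<b$, then $\Var(X)\ge \frac{pp'}{p+p'}(b-a)^2$, or more crudely $\Var(X) \ge \min(p,p')(b-a)^2/4$.

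For small $a$, we take the event that $\bm W$ exits the disc of radius $2$ before time $a$, contributes $\{\Theta^{\sfr}\le a\}$ after a suitable shape condition, and compare via scaling $\Theta^{\sfr}(y) \overset{d}{=} y^2\Theta^{\sfr}$. A simpler route uses $\{\Theta^{\sfr} \ge b\} \supseteq \{\min(R_1,R_2)(b) < 2\}$ through Lemma~\ref{lem:r_bound_range}. Its probability is $1-\p(R\ge 2/\sqrt b)^2$, which Feller's expansion evaluates rigorously. For the event $\{\Theta^{\sfr} \le a\}$ we need an explicit scenario forcing a large inscribed circle early. Examples are the path visiting small neighbourhoods of three vertices of a large equilateral triangle in order, or staying in a tube. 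Lower bounds for such tube probabilities come from eigenfunction expansions for Brownian motion in strips or discs.

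The main obstacle is the tube/shape event in the lower bound. Getting any rigorous explicit positive probability requires chaining several crude estimates, which explains the tiny constant $5.2\cdot 10^{-11}$. The upper bound, by contrast, is bookkeeping with already-computed hourglass moments.
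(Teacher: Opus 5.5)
Both halves of your plan have genuine gaps.

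\textbf{Lower bound.} Your two-point inequality is fine, but it needs an explicit lower bound on $\p(\Theta^{\sfr}\le a)$. The tube/shape scenario you sketch for it is never made quantitative, and you yourself flag it as the main obstacle. The missing observation is that this tail is not needed. Theorem~\ref{thm:up_low_exp_inv_inr} already gives $2.61378\le\E[\Theta^{\sfr}]\le 18.7460$, so Chebyshev centered at the mean, using only the upper tail, suffices:
\[
\Var(\Theta^{\sfr})\ge\alpha^2\big(\E[\Theta^{\sfr}]\big)^2\,\p\big(\Theta^{\sfr}\ge(1+\alpha)\E[\Theta^{\sfr}]\big),\qquad \p(\Theta^{\sfr}\ge u)\ge 1-\big(1-F_R(2/\sqrt{u})\big)^2 .
\]
The second inequality is exactly your event $\{\min\{R_1(u),R_2(u)\}\le 2\}$ combined with Brownian scaling. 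Put $2.61378$ in the prefactor. Since $y\mapsto F_R(2/\sqrt{(1+\alpha)y})$ is decreasing, put $18.7460$ inside $F_R$. Then $\alpha=0.09$ and the first term of the series for $F_R$ give $5.2\cdot 10^{-11}$. Within your own framework, Markov's inequality $\p(\Theta^{\sfr}\le a)\ge 1-18.7460/a$ would also dispose of the lower tail for free. So the tiny constant comes from $F_R(2/\sqrt{1.09\cdot 18.7460})\approx 5\cdot 10^{-10}$, not from any tube estimate.

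\textbf{Upper bound.} There are three problems.

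First, $\Var(\Theta^{\sfr})\le\E[(\Theta^{\sfr})^2]$ cannot yield $709.599$. The construction's second-moment bound is $\inf_{\mathfrak F(2)}\Psi_2\le 716.430$, and the stated constant is $716.430-(2.61378)^2$. You must subtract the square of the lower bound on the mean.

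Second, the first stage is not a disc exit but $T_a=\min\{\Theta_1(2a),\Theta_2(2a)\}$. A rotation then puts $\bm W(T_a)$ at $A=(0,a)$ with the segment $AB$ in the hull, so stage two always starts at $(0,a)$. The condition $Q_{m,b,a}\ge 0$ on $[b,\infty)$ makes every hourglass exit point produce a triangle of inradius at least $1$. Hence there is no exit law to integrate over, and no failure or restart. Your Wald-type identity would in any case need $N$ independent of the cycle lengths, which fails when success is decided by the cycle itself.

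A disc first stage is also genuinely worse. With $\rho=2a$ one has $\E\tau_1=2a^2$ and $\E\tau_1^2=3\rho^4/8=6a^4$. Your value $5\rho^4/24$ is wrong; apply optional stopping to the martingale $|\bm W(t)|^4-8t|\bm W(t)|^2+8t^2$. Compare these with $4a^2M_\Theta(1)\approx 1.39a^2$ and $16a^4M_\Theta(2)\approx 2.39a^4$. Re-optimizing under just the necessary constraints does not rescue it:
\begin{itemize}
\item $m>1+\sqrt2$;
\item $a^2\ge m^2+1$, from the leading coefficient of $Q_{m,b,a}$;
\item $b\ge 2a^2/(a^2-1)$, from $Q_{m,b,a}(b)\ge0$.
\end{itemize}
Under these, the disc analogue of $\Psi_2$ stays above roughly $830$.

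Third, this is not mere bookkeeping with hourglass moments. You need $M_\Theta(2)=\E[\min\{\Theta_1,\Theta_2\}^2]\le 0.149321$, which takes its own series computation (Lemma~\ref{lem:est_sec_mom_invers_range}). The parameters must also be re-chosen in $\mathfrak F(2)$, because Lemma~\ref{lem:second_moment} requires $m>1+\sqrt2$. The first-moment value $18.7460$ plays no role in this step.
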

Both proofs of the lower bounds rely on some basic ideas like Chebyshev's inequality, and bounding rectangles. However, for the upper bound in Theorem~\ref{thm:up_low_var_inr}, we build upon the already nontrivial approach used for the upper bound of $\E[\sfr]$ in Theorem~\ref{thm:ub_exp_inr}. Moreover, for the upper bound in Theorem~\ref{thm:up_low_var_inv_inr} we need to further develop the aforementioned two-stage construction used in the proof of the upper bound for $\E[\Theta^{\sfr}]$ in Theorem~\ref{thm:up_low_exp_inv_inr}.

When it comes to the already known results about the variance of the other geometric functionals, there are again some explicit results, and some bounds. An exact expression for $\Var(\sfP)$ can be found in~\cite{Wade_Xu-SPA}, and an exact expression for the variance of the perimeter of the convex hull of a standard planar Brownian bridge was shown in~\cite{goldman}. The authors in~\cite{Wade_Xu-SPA} also found bounds for $\Var(\sfA)$, and for the variance of the area of the convex hull spanned by the time-space trajectory of a standard one-dimensional Brownian motion. As the authors state in~\cite{Wade_Xu-SPA}, the main interest of the lower bounds they found was that they are positive, which established the non-degeneracy of the corresponding random variables. They write that those bounds are certainly not sharp and can surely be improved, although they have been unable to improve any of them sufficiently to warrant reporting the details. They give an idea for the improvement of the lower bound for the variance of the area of the convex hull spanned by the time-space trajectory of a standard one-dimensional Brownian motion, but they don't go into details. As in our paper here, their bounds are very small (on the order of magnitude $10^{-7}$, and $10^{-6}$), but it seems that it is not easy to get better bounds.

It is worth mentioning here that there are some other interesting results in the literature dealing with convex hulls of time-space trajectories of random processes. In the already mentioned paper~\cite{Wade_Xu-SPA}, the authors provide an explicit expression for the expected value of the area of the convex hull spanned by the time-space trajectory of a standard one-dimensional Brownian motion. This result was generalized to the expected volume, in an arbitrary dimension $d$, of the convex hull of the time-space trajectory of a standard $(d-1)$-dimensional Brownian motion; see~\cite{Cygan-Sandric-Sebek}. Many other questions were studied for convex hulls of time-space trajectories of random processes; see the papers~\cite{MR4828847,MR4718444,MR4684056,AHL_2022__5__779_0,MR2898714,Molchanov-Wespi,MR2978134}.

It is important to mention the study of convex hulls of random walks, which is also a very active field of current research. Many different questions are addressed. Most related to our work are the papers dealing with computations of the expected value of geometric functionals of the convex hull. The first paper in this direction was~\cite{Spitzer-Widom}, where the authors found a formula for the expected perimeter; see also~\cite{Baxter}. Later, a number of works concentrated on several closely related questions; see e.g.~\cite{Snyder-Steele, Vysotsky-Zapor, Cygan-Sandric-Sebek, CSSW, Akopyan_Vysotsky, KVZ, LH_W, McR_W, Wade_Xu-PAMS}.

In higher dimensions, the typical geometric functionals studied are the intrinsic volumes; see~\cite{Kabluchko-Zapor-TAMS, Eldan} for the case of the multidimensional Brownian convex hull, and~\cite{Molchanov-Wespi, Molchanov} for the case of convex hulls spanned by L\'{e}vy processes. Many of the results about particular geometric functionals and their inverse processes from~\cite{CPS} have also been generalized to higher dimensions in~\cite{high_dim_hulls}.

We accompany each of our main results with the corresponding simulation results. More precisely, each quantity for which we show rigorous bounds, we also estimate via the Monte Carlo method. Since we are dealing with inradius, the trickiest part of the simulations was to find the center of the largest circle inscribed inside the convex hull, the so-called Chebyshev's center. All the details about finding the Chebyshev's center, and performing simulations in general can be found in~\cite[Section 5]{CPS}, where authors carried out a simulation study and provided Monte Carlo estimates for all the quantities studied in that paper. The summary of the simulation results, as well as the summary of all the main theorems proved in this paper, can be found in the following Table~\ref{Table-simulations}.
\begin{table}[!h]
\begin{center}
\renewcommand{\arraystretch}{1.3}
	\begin{tabular}{ |c|c|c|c|c| } 
		\hline
		Quantity & Lower bound & MC estimate & Upper bound & Corresponding result \\ 
		\hline \hline
		$\E[\sfr]$& $0.393$\tablefootnote{This is the only bound that is not a result from this paper, but from~\cite{CPS}.} & $0.513$ & $0.667652$ & Theorem~\ref{thm:ub_exp_inr} \\
		\hline
		$\E[\Theta^{\sfr}]$& $2.61378$ & $4.176$ & $18.7460$ & Theorem~\ref{thm:up_low_exp_inv_inr} \\
		\hline
		$\Var[\sfr]$ & $2.7 \cdot 10^{-6}$ & $8.4 \cdot 10^{-3}$ & $0.315520$ & Theorem~\ref{thm:up_low_var_inr} \\
		\hline
		$\Var(\Theta^{\sfr})$ & $5.2\cdot 10^{-11}$ & $2.234$ & $709.599$ & Theorem~\ref{thm:up_low_var_inv_inr}\\
		\hline
	\end{tabular}
	\renewcommand{\arraystretch}{1}
	\vspace{0.3cm}
	\caption{Our bounds on the mean and the variance of the inradius and the inverse inradius process, compared with estimated values coming from the Monte Carlo simulations.}
	\label{Table-simulations}
\end{center}
\end{table}

\begin{remark}
All numerical constants appearing in the bounds below are reported to six decimal places. This convention is not intrinsic, i.e. the individual components of the estimates are known, or can be computed, to arbitrary precision, and increasing the working precision would lead to marginally sharper numerical bounds than the ones reported in Table~\ref{Table-simulations}. Since these improvements are insignificant for the conclusions and would reduce readability, we keep a uniform six-decimal precision throughout.
\end{remark}

The rest of the article is organized as follows. In Section~\ref{sec:hour_glass} we study the exit time of standard planar Brownian motion from the hourglass domain. In Section~\ref{sec:ub_exp_inr} we provide the upper bound on $\E[\sfr]$. Section~\ref{sec:bb_exp_inv_inr} deals with bounds on $\E[\Theta^{\sfr}]$. In Section~\ref{sec:var_bounds} we provide bounds on the variances related to both the inradius and inverse inradius process.

\section{Exit time from the hourglass domain}\label{sec:hour_glass}

Fix the parameters $m>0$ and $b>0$, and consider the hyperbola
\begin{equation}\label{eq:hyperbola}
x^2-\frac{y^2}{m^2}=b^2,
\end{equation}
with asymptotes $y=\pm m x$ and horizontal axis intercepts of $\pm b$. The \emph{hourglass domain} $\mathbb{H}_{m,b}$ is the region strictly between the two hyperbola branches, that is,
\begin{equation}\label{eq:hourglass}
\mathbb{H}_{m,b}:=\mleft\{(x,y)\in\mathbb{R}^2: x^2-\frac{y^2}{m^2}<b^2\mright\}.
\end{equation}
See Figure~\ref{fig:hyperbola} for a depiction of $\mathbb{H}_{m,b}$ with $m=1$ and $b=1$.

\begin{figure}[h!]
\centering
\includegraphics[scale=1]{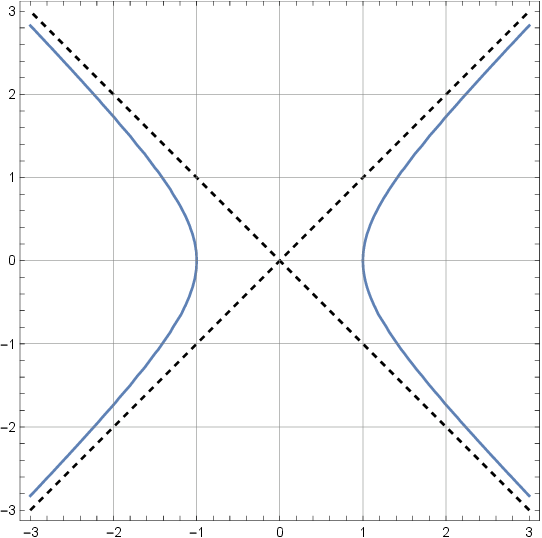}
\caption{The hyperbola from~\eqref{eq:hyperbola} with $m=1$ and $b=1$, and dashed lines for asymptotes. The hourglass domain $\mathbb{H}_{1,1}$ is the region between the two hyperbola branches.}
\label{fig:hyperbola}
\end{figure}

Define the \emph{first exit time} of $\bm{W}$ from $\mathbb{H}_{m,b}$ by
\begin{equation}\label{eq:exit_time}
\tau_{m,b}:=\inf\big\{t\ge  0: \bm{W}(t)\notin \mathbb{H}_{m,b}\big\}.
\end{equation}
Note that $\tau_{m,b}$ is a stopping time with respect to the natural filtration of $\bm{W}$. We need formulas for the first two moments of $\tau_{m,b}$ as functions of the starting point of $\bm{W}$. We begin with a lemma that gives a precise condition on $m>0$ that will ensure these moments are finite. Our proof relies on the fact that the domain $\mathbb{H}_{m,b}$ is \emph{star-like with center} $\boldsymbol{0}$. This means that for every $\boldsymbol{z}\in \mathbb{H}_{m,b}$, the line segment connecting $\boldsymbol{0}$ to $\boldsymbol{z}$ is contained in $\mathbb{H}_{m,b}$. Of particular importance is the \emph{opening angle} $\alpha_m$ of $\mathbb{H}_{m,b}$, which is defined as the angle between the upper asymptote rays. A straightforward calculation gives
\begin{equation}\label{eq:opening_angle}
\alpha_m=2\arctan (1/m).
\end{equation}

\begin{lemma}\label{lem:integrability}
Let the exit time $\tau_{m,b}$ and opening angle $\alpha_m$ be defined as in~\eqref{eq:exit_time} and~\eqref{eq:opening_angle}, respectively, with parameters $m>0$ and $b>0$. Then for all powers $p\in\R$ and all starting points $(x,y)\in \mathbb{H}_{m,b}$, we have 
\[
\E _{(x,y)}[\tau_{m,b}^p]<\infty~\text{ if and only if }~p<\frac{\pi}{2\alpha_m}.
\]
In particular, $\E _{(x,y)}[\tau_{m,b}^p]<\infty$ for all $m>0$ when $p\leq\frac{1}{2}$, while for $p>\frac{1}{2}$, we have 
\[
\E _{(x,y)}[\tau_{m,b}^p]<\infty~\text{ if and only if }~m>\cot\left(\frac{\pi}{4p}\right).
\]
\end{lemma}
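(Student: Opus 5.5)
The strategy is to compare $\tau_{m,b}$ with exit times from two-sided wedges (infinite double cones) and use Burkholder's classical result. The result says that for a cone $C$ with vertex at the origin and opening angle $\theta$, the exit time $T$ of planar Brownian motion started inside $C$ satisfies $\E[T^p]<\infty$ if and only if $p<\pi/(2\theta)$. The same criterion holds for the union of two opposite wedges, since before exiting, the motion stays in one connected component.

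The hourglass $\mathbb{H}_{m,b}$ consists of a bounded neck region and two unbounded ends, the upper and lower lobes near $y\to\pm\infty$. Each lobe is asymptotic to the wedge of opening angle $\alpha_m$ bounded by the rays $y=\pm m x$.

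\emph{Upper bound on moments.} Since $x^2-y^2/m^2<b^2$ implies $|x|<b+|y|/m$, the domain $\mathbb{H}_{m,b}$ is contained in the union of two wedges with vertices shifted along the $y$-axis. More precisely, it lies inside $\{|x|<(|y|+mb)/m\}$, which is a "bowtie" made of two opposite wedges of angle $\alpha_m$ with apices at $(0,\mp mb)$. This set is not itself a cone, so I would instead use a single larger enclosing set. For any $\alpha'>\alpha_m$, the domain $\mathbb{H}_{m,b}$ is contained in the union of a large ball $B_\rho$ and the double wedge $D_{\alpha'}$ of angle $\alpha'$ with vertex $\bm 0$.

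To bound $\E[\tau^p]$ for $p<\pi/(2\alpha_m)$, pick $\alpha'$ with $p<\pi/(2\alpha')$. A standard renewal/strong Markov argument then applies. Each time the path is in $B_\rho$, it exits $B_\rho\cup D_{\alpha'}$ within a time that has exponential tails uniformly in the start point. Once outside $B_\rho$, it is in $D_{\alpha'}$ and its remaining lifetime is dominated by the exit time of the wedge, started from a point at distance $\ge\rho$, which has tail $\p(T>t)\lesssim (|z|^2/t)^{\pi/(2\alpha')}$. Combining gives $\p_{(x,y)}(\tau>t)\le C t^{-\pi/(2\alpha')}$, hence finiteness of the $p$-th moment.

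A cleaner alternative is to use the harmonic-function approach directly. The function $u(r,\phi)=r^{\pi/\alpha'}\cos(\pi(\phi-\pi/2)/\alpha')$ is positive harmonic on the upper wedge and comparable to $r^{\pi/\alpha'}$ on the lobe far out. Via $\E[\sup_{t\le\tau}|\bm W_t|^{2p}]<\infty$ and the Burkholder–Davis–Gundy inequality, this gives the moment of $\tau$.

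\emph{Lower bound (infinite moments).} For $p\ge\pi/(2\alpha_m)$, note that for any $\alpha''<\alpha_m$ the upper lobe contains a truncated wedge $\{ \phi\in(\pi/2-\alpha''/2,\pi/2+\alpha''/2),\ r>\rho'\}$. By starting-point independence, which holds since the motion reaches any open set with positive probability before exit, it suffices to start deep inside this truncated wedge, say at $(0,y_0)$. The exit time from $\mathbb{H}_{m,b}$ dominates the exit time from the shifted full wedge with vertex $(0,\rho'')$ of angle $\alpha''$ that is contained in the lobe (choosing the vertex high enough). By Burkholder, that exit time has infinite $p$-th moment when $p\ge\pi/(2\alpha'')$.

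Letting $\alpha''\uparrow\alpha_m$ handles $p>\pi/(2\alpha_m)$. The boundary case $p=\pi/(2\alpha_m)$ is the main obstacle, since an inscribed wedge of angle strictly less than $\alpha_m$ is not enough. Here I would use that the hyperbola lies \emph{outside} its asymptotes, so the lobe actually contains the full wedge of angle exactly $\alpha_m$ with vertex at $\bm 0$: indeed $|x|<m^{-1}|y|$ implies $x^2-y^2/m^2<0<b^2$. Hence the exit time from $\mathbb{H}_{m,b}$ dominates the exit time from the wedge of angle $\alpha_m$ itself, whose $\pi/(2\alpha_m)$-th moment is infinite by Burkholder. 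This also makes the lower bound immediate without the limiting argument, so I would present it this way.

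The upper bound then carries the real work: showing that the extra thickness of the hyperbola beyond its asymptotes does not change the tail exponent. I expect to need $\alpha'>\alpha_m$ but arbitrarily close, which suffices since the condition $p<\pi/(2\alpha_m)$ is strict.

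Finally, negative and small $p$ are trivial: for $p\le0$, the moment is finite since $\tau>0$ a.s. with $\E[\tau^{p}]<\infty$, which follows by comparison with the exit time from a small disc around the start point, whose negative moments are all finite. The "in particular" statement is algebra. Since $\alpha_m<\pi$, we have $\pi/(2\alpha_m)>1/2$, so every $p\le 1/2$ qualifies. For $p>1/2$, the condition $2\arctan(1/m)<\pi/(2p)$ is equivalent to $m>\cot(\pi/(4p))$.
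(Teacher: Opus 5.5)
Your route differs from the paper's. The paper settles the equivalence in one step. It cites Burkholder to make finiteness independent of the starting point, and then Markowsky's theorem for spiral-like domains (a star-like domain being spiral-like of order $0$). That theorem reads off the critical exponent $\pi/(2\alpha_m)$ from the asymptotic opening angle and handles the bounded neck internally. You instead reduce to the classical wedge criterion (Spitzer, Burkholder), which is more elementary and shows directly why only the asymptotes matter.

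Your divergence half is complete and is the best part of the proposal. Since the hyperbola lies outside its asymptotes, the open wedge $\{y>m|x|\}$ of angle exactly $\alpha_m$ sits inside $\mathbb{H}_{m,b}$, so the critical case $p=\pi/(2\alpha_m)$ needs no limiting argument. The transfer to an arbitrary starting point is also correct: apply the strong Markov property at the hitting time of a small closed disc inside that wedge, which is reached before $\tau_{m,b}$ with positive probability. The case $p\le 0$ and the algebra for the ``in particular'' part are fine too.

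The finiteness half, however, does not work as written, and it carries the real content. Two of your claims are false.
\begin{itemize}
\item From a point of $B_\rho\cap D_{\alpha'}$, the exit time of $B_\rho\cup D_{\alpha'}$ dominates a wedge exit time, so it has no exponential tail. Only the exit time of a ball does.
\item After leaving $B_\rho$, the remaining lifetime is \emph{not} dominated by the wedge exit time. The path can come back into $B_\rho$, cross $\partial D_{\alpha'}$ where it passes through the neck, and keep living in $\mathbb{H}_{m,b}$.
\end{itemize}
What is missing is control of the number of such returns.

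A standard repair goes as follows. Enlarge $\rho$ so that $\mathbb{H}_{m,b}\setminus B_\rho\subset D_{\alpha'}$ and $B_{2\rho}\setminus\mathbb{H}_{m,b}$ contains a fixed closed disc $K$. Use two radii, since with a single radius the path re-enters $B_\rho$ infinitely often near each crossing. Set $\sigma_0=0$, $\eta_k=\inf\{t\ge\sigma_k:|\bm W(t)|\ge 2\rho\}$ and $\sigma_{k+1}=\inf\{t\ge\eta_k:|\bm W(t)|\le\rho\}$, all truncated at $\tau_{m,b}$. Then:
\begin{itemize}
\item $\eta_k-\sigma_k$ is dominated by the exit time of $B_{2\rho}$.
\item $\sigma_{k+1}-\eta_k$ is dominated by the exit time of $D_{\alpha'}$ from a point of modulus $2\rho$, whose $p$-th moment is uniformly bounded for $p<\pi/(2\alpha')$.
\item From every point of $\overline{B}_\rho$ the path hits $K$ before leaving $B_{2\rho}$ with probability at least some $q>0$. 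Hence $\p(\sigma_k<\tau_{m,b})$ decays geometrically in $k$.
\end{itemize}
Minkowski's inequality (or subadditivity of $x\mapsto x^p$ when $p<1$) then gives $\E[\tau_{m,b}^p]<\infty$.

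Your ``cleaner alternative'' has the same blind spot. Burkholder's criterion needs a harmonic majorant of $|z|^{2p}$ on all of $\mathbb{H}_{m,b}$. Your wedge function is harmonic and positive only on the upper wedge, not across the neck or in the lower lobe. A candidate such as $\Re\big((c-z^2)^{\pi/(2\alpha')}\big)$ with $c$ large would have to be checked on the whole domain.
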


\begin{proof}
First note that $\E _{(x,y)}[\tau_{m,b}^p]$ is finite for some $(x,y)\in \mathbb{H}_{m,b}$ if and only if $\E _{(x,y)}[\tau_{m,b}^p]$ is finite for all $(x,y)\in \mathbb{H}_{m,b}$; see~\cite[Eq.~(3.13)]{Burkholder}. Hence, the particular starting point of $\bm{W}$ is irrelevant as long as it starts from the interior of $\mathbb{H}_{m,b}$. Next, we apply \cite[Thm.~2]{Markowsky}. This theorem establishes an equivalent condition for the integrability of the $p$th power of the exit time from a \emph{spiral-like} domain of order $\sigma\geq 0$ with center $\boldsymbol{0}$, in terms of $\sigma$ and a quantity analogous to the maximum opening angle of the domain. As noted in \cite[\S3]{Markowsky}, a star-like domain with center $\boldsymbol{0}$ is also a spiral-like domain of order $0$ with center $\boldsymbol{0}$. It is easy to see that $\mathbb{H}_{m,b}$ is a star-like domain with center $\boldsymbol{0}$, hence the theorem applies in this case with $\sigma=0$ and opening angle $\alpha_m$. This establishes the first if and only if statement, from which the rest of the lemma can be deduced using the formula for $\alpha_m$.
\end{proof}

\begin{remark}
The integrability condition from Lemma~\ref{lem:integrability} is closely related to Spitzer's integrability condition for the exit time of wedges; see~\cite[Thm.~2]{Spitzer}.
\end{remark}


\subsection{First moment of the exit time}

\begin{lemma}\label{lem:torsion}
Consider the exit time $\tau_{m,b}$ of the hourglass domain $\mathbb{H}_{m,b}$, as defined by~\eqref{eq:exit_time} and~\eqref{eq:hourglass}. Then for any $m>1$ and $b>0$, we have
\begin{equation}\label{eq:defn_u(x,y)_expectation}
\E _{(x,y)}[\tau_{m,b}]=\frac{m^2(b^2-x^2)+y^2}{m^2-1}, \quad \text{for all }(x,y)\in\mathbb{H}_{m,b}.
\end{equation}
\end{lemma}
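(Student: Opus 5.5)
Let $u(x,y)$ denote the right-hand side of~\eqref{eq:defn_u(x,y)_expectation}. The plan is a direct verification with It\^o's lemma and a localization argument, using Lemma~\ref{lem:integrability} to control the unbounded part of the domain.

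First, $u$ solves the right Poisson problem. We have
\[
\Delta u = \frac{-2m^2+2}{m^2-1} = -2,
\]
so $\tfrac12\Delta u=-1$. On the hyperbola $x^2-y^2/m^2=b^2$ we get $m^2(b^2-x^2)+y^2=0$, so $u$ vanishes on $\partial\mathbb{H}_{m,b}$. Moreover $u>0$ inside $\mathbb{H}_{m,b}$, because $m^2>1$.

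Next, apply It\^o's lemma to $u(\bm W)$ stopped at $\tau_n:=\tau_{m,b}\wedge n\wedge \sigma_n$, where $\sigma_n$ is the exit time from the ball of radius $n$. This gives
\[
\E_{(x,y)}[u(\bm W(\tau_n))]=u(x,y)-\E_{(x,y)}[\tau_n].
\]
The stochastic integral is a true martingale since $\nabla u$ is bounded on the ball. As $n\to\infty$ we have $\tau_n\uparrow\tau_{m,b}$, which is a.s.\ finite. By monotone convergence $\E[\tau_n]\to\E[\tau_{m,b}]$.

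It remains to show $\E[u(\bm W(\tau_n))]\to \E[u(\bm W(\tau_{m,b}))]=0$, and this is the main obstacle because the domain is unbounded. Since $u\ge 0$ on $\overline{\mathbb{H}}_{m,b}$, the identity already gives $\E[\tau_{m,b}]\le u(x,y)<\infty$ via Fatou. This is also consistent with Lemma~\ref{lem:integrability}: $m>1$ means $\alpha_m<\pi/2$, so $p=1<\pi/(2\alpha_m)$. For the reverse inequality we need uniform integrability of $u(\bm W(\tau_n))$. On the domain, $u(\bm W(t))\le C(1+|\bm W(t)|^2)$, and $0\le u\le (y^2+m^2b^2)/(m^2-1)$. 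Hence it suffices to dominate by $\sup_{t\le\tau_{m,b}}W_2(t)^2$ plus a constant. Doob's $L^2$ maximal inequality applied to the martingale $W_2(t\wedge\tau_{m,b})$ bounds $\E[\sup_{t\le \tau_{m,b}}W_2(t)^2]$ by $4\E[W_2(\tau_{m,b})^2]=4(y^2+\E[\tau_{m,b}])$, which is finite by the previous step.

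Dominated convergence then gives $\E[u(\bm W(\tau_n))]\to 0$, because $u(\bm W(\tau_{m,b}))=0$ by continuity of paths and the vanishing of $u$ on the boundary. This yields $\E_{(x,y)}[\tau_{m,b}]=u(x,y)$.
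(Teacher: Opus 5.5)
Your proof is correct, but it justifies the key limit $\E_{(x,y)}[u(\bm W(\tau_n))]\to 0$ differently from the paper.

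The paper stops at $\tau_{m,b}\wedge n$ and proves uniform integrability via $L^p$-boundedness for some $p>1$. It appeals to Lemma~\ref{lem:integrability} (Markowsky's criterion for star-like domains) to get $\E_{(x,y)}[\tau_{m,b}^p]<\infty$ for $p\in(1,\pi/(2\alpha_m))$. It then uses the Burkholder--Davis--Gundy inequality to control $\sup_n\E_{(x,y)}|W_i(T_n)|^{2p}$.

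You instead exploit two structural features of $u$:
\begin{itemize}
\item Its nonnegativity on $\overline{\mathbb H}_{m,b}$ turns the stopped identity into the a priori bound $\E_{(x,y)}[\tau_{m,b}]\le u(x,y)$. So finiteness of $\tau_{m,b}$ and of its mean come for free, with no external integrability theorem.
\item The one-sided bound $u\le (m^2b^2+y^2)/(m^2-1)$ on $\overline{\mathbb H}_{m,b}$ depends on the second coordinate only. This lets you dominate $u(\bm W(\tau_n))$ by a constant plus $\sup_{t\le\tau_{m,b}}W_2(t)^2$. That variable is integrable by Doob's $L^2$ inequality, since $\E_{(x,y)}[W_2(t\wedge\tau_{m,b})^2]=y^2+\E_{(x,y)}[t\wedge\tau_{m,b}]\le y^2+u(x,y)$.
\end{itemize}

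Your route is more elementary and self-contained: it needs only the first moment, which it generates itself. The paper's route is a generic template (polynomial solution plus a moment of order slightly above half its degree). It ties the admissible range of $m$ directly to the sharp integrability threshold, and it is reused for Lemma~\ref{lem:second_moment}.

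Your idea adapts to Lemma~\ref{lem:second_moment} as well. For $m>1+\sqrt2$ one checks that $v\ge 0$ and $v\le C(1+y^2)^2$ on $\overline{\mathbb H}_{m,b}$. This gives $\E_{(x,y)}[\tau_{m,b}^2]\le v(x,y)$ a priori, and a dominating variable via a fourth-moment maximal inequality.

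Two minor points. The a.s.\ finiteness of $\tau_{m,b}$ that you assert before the Fatou step is actually a consequence of that step. The extra localization by $\sigma_n$ is harmless but unnecessary, since $\nabla u(\bm W)$ is already square-integrable on bounded time intervals.
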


\begin{proof}
We start by defining
\begin{equation}\label{eq:u_formula}
    u(x,y) = \frac{m^2(b^2-x^2)+y^2}{m^2-1},
\end{equation}
for notational convenience. Since $u$ is twice continuously differentiable, we can use It\^{o}'s lemma to write the almost sure equality
\begin{equation}\label{eq:Ito}
u\big(\bm{W}(t)\big)=u\big(\bm{W}(0)\big)+\int_0^t \nabla u\big(\bm{W}(s)\big)\cdot\D{\bm{W}(s)}+\frac{1}{2}\int_0^t\Delta u\big(\bm{W}(s)\big)\D{s}, \quad \text{for all }t\ge  0.
\end{equation}

The stochastic integral term appearing in~\eqref{eq:Ito} can be expanded as 
\begin{equation}\label{eq:martingales}
\int_0^t \nabla u\big(\bm{W}(s)\big)\cdot\D{\bm{W}(s)}=\frac{-2m^2}{m^2-1}\int_0^t W_1(s)\D{W_1(s)}+\frac{2}{m^2-1}\int_0^t W_2(s)\D{W_2(s)},\quad \text{for all }t \ge 0.
\end{equation}
Since both integrands are square-integrable, each term on the right-hand side of~\eqref{eq:martingales} is a genuine martingale with respect to the natural filtration of $\bm{W}$. The Riemann integral term in~\eqref{eq:Ito} can also be simplified by writing
\begin{equation}\label{eq:Laplacian}
\frac{1}{2}\int_0^t\Delta u\big(\bm{W}(s)\big)\D{s}=\frac{1}{2}\int_0^t\frac{-2m^2+2}{m^2-1}\D{s}=-t,\quad \text{for all }t \ge 0.
\end{equation}
To apply the optional stopping theorem to~\eqref{eq:martingales}, we need to work with the sequence of bounded stopping times $T_n:=\tau_{m,b}\wedge n$, with $n\in\mathbb{N}$. Now, taking expectations of both sides of~\eqref{eq:Ito} at time $T_n$ and using~\eqref{eq:Laplacian} results in 
\begin{equation}\label{eq:prelimit}
\E _{(x,y)}\mleft[u\big(\bm{W}(T_n)\big)\mright]=u(x,y)-\E _{(x,y)}[T_n],\quad \text{for all }n \in\mathbb{N}.
\end{equation}

At this juncture, we'd like to show that the left-hand side of~\eqref{eq:prelimit} converges to $0$ as $n\to\infty$. Towards this end, note from~\eqref{eq:hyperbola} that $(x,y)\in\partial\mathbb{H}_{m,b}$ implies $u(x,y)=0$, that is, $u$ vanishes on the boundary of the hourglass. Moreover, the assumption $m>1$ together with Lemma~\ref{lem:integrability} implies that $\tau_{m,b}<\infty$ almost surely. Now it follows from the path continuity of $\bm{W}$ that 
\begin{equation}\label{eq:almost_sure}
\lim_{n\to\infty}u\big(\bm{W}(T_n)\big)=0~~\text{almost surely.}
\end{equation}

We need to combine this almost sure convergence with uniform integrability to get the desired $0$ limit on the left-hand side of~\eqref{eq:prelimit}. We do this by establishing the existence of $p>1$ satisfying
\begin{equation}\label{eq:sup_condition}
\sup_{n\in\mathbb{N}}\E _{(x,y)}\Big[\mleft|u\big(\bm{W}(T_n)\big)\mright|^p\Big]<\infty.
\end{equation}

Since $u(\bm{W}(T_n))$ is a polynomial of degree $2$ in the components of $\bm{W}(T_n)$, there exists a constant $c_{m,b}>0$, depending only on the parameters, such that 
\begin{equation}\label{eq:polynomial_growth}
\mleft|u\big(\bm{W}(T_n)\big)\mright|\le  c_{m,b}\mleft(1+W_1(T_n)^2+W_2(T_n)^2\mright).
\end{equation}
For any $p>1$, we can use~\eqref{eq:polynomial_growth} with Jensen's inequality to write
\[
\mleft|u\big(\bm{W}(T_n)\big)\mright|^p \le  3^{p-1}c_{m,b}^p\mleft(1+|W_1(T_n)|^{2p}+|W_2(T_n)|^{2p}\mright),\quad \text{for all }n \in\mathbb{N}.
\]
Hence, the condition~\eqref{eq:sup_condition} and the desired uniform integrability will follow if we can find $p>1$ such that 
\begin{equation}\label{eq:UI_bound}
\sup_{n\in\mathbb{N}}\E _{(x,y)}\big[|W_1(T_n)|^{2p}\big]<\infty\quad \text{and}\quad \sup_{n\in\mathbb{N}}\E _{(x,y)}\big[|W_2(T_n)|^{2p}\big]<\infty.
\end{equation}

Since the $W_1$ and $W_2$ components of $\bm{W}$ are independent, they are both one-dimensional Brownian motions in the natural filtration of $\bm{W}$. In particular, the components are continuous martingales with respect to that filtration. This allows us to use the Burkholder--Davis--Gundy inequality~\cite[Thm.~3.3.28]{Karatzas_Shreve} at the stopping times $\{T_n\}_{n\ge  1}$. More specifically, for any $p>0$, there exists a positive constant $c_p$, depending only on $p$, such that for all $n\in\mathbb{N}$ we have
\begin{equation}\label{eq:BDG}
\E _{(x,y)}\mleft[\mleft(\sup_{0\le  t\le  T_n}|W_1(t)-x|\mright)^{2p}\mright]\le  c_p\,\E _{(x,y)}\mleft[T_n^p\mright].
\end{equation}
Of course,~\eqref{eq:BDG} also holds with $|W_1(t)-x|$ replaced by $|W_2(t)-y|$.

Assuming that $2p>1$, we can use the triangle and Jensen's inequality to write
\begin{equation}
|W_1(T_n)|^{2p}\le \big(|W_1(T_n)-x|+|x|\big)^{2p}\le  2^{2p-1}\mleft(|W_1(T_n)-x|^{2p}+|x|^{2p}\mright).\label{eq:pre_BDG}
\end{equation}
After taking expectations and then supremums over $n\in\mathbb{N}$ on both sides of~\eqref{eq:pre_BDG}, we can use~\eqref{eq:BDG} along with monotone convergence to write
\begin{equation}\label{eq:BDG_bound}
\sup_{n\in\mathbb{N}}\E _{(x,y)}\big[|W_1(T_n)|^{2p}\big]\le  2^{2p-1}\mleft(c_p\,\E _{(x,y)}\mleft[\tau_{m,b}^p\mright]+|x|^{2p}\mright).
\end{equation}

Next, we appeal to Lemma~\ref{lem:integrability} to show that the right-hand side of~\eqref{eq:BDG_bound} is finite. To see this, note that $m>1$ implies that $\alpha_m<\pi/2$, where $\alpha_m$ is the opening angle defined in~\eqref{eq:opening_angle}. This in turn implies that $\pi/(2\alpha_m)>1$. Hence, $(1,\pi/(2\alpha_m))$ is a nonempty interval, and any $p$ therein will satisfy $\E_{(x,y)}[\tau_{m,b}^p]<\infty$. Since the same argument clearly works for the $Y$ component, we have shown that~\eqref{eq:UI_bound}, and therefore~\eqref{eq:sup_condition}, hold for some $p>1$. Now letting $n\to\infty$ on both sides of~\eqref{eq:prelimit} while using uniform integrability and~\eqref{eq:almost_sure} on the left-hand side and monotone convergence on the right-hand side results in $\E _{(x,y)}[\tau_{m,b}]=u(x,y)$ as desired.
\end{proof}


\subsection{Second moment of the exit time}

\begin{lemma}\label{lem:second_moment}
Consider the exit time $\tau_{m,b}$ of the hourglass domain $\mathbb{H}_{m,b}$, as defined by~\eqref{eq:exit_time} and~\eqref{eq:hourglass}. Then for any $m>1+\sqrt{2}$ and $b>0$, we have
\begin{equation*}
\E _{(x,y)}\mleft[\tau_{m,b}^2\mright] =\frac{\E _{(x,y)}[\tau_{m,b}]}{3(m^4-6m^2+1)}\mleft(\frac{m^2(m^2-5)}{m^2-1}\Big((5m^2-1)b^2-(m^2-1)x^2\Big)+(5m^2-1)y^2\mright)
\end{equation*}
for all $(x,y)\in\mathbb{H}_{m,b}$, where $\E_{(x,y)}[\tau_{m,b}]$ is explicitly calculated in~\eqref{eq:defn_u(x,y)_expectation}.
\end{lemma}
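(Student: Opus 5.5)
We follow the same first-principles scheme as in the proof of Lemma~\ref{lem:torsion}, now one level higher. The idea is the standard one for second moments: if $v$ solves $\tfrac12\Delta v=-2u$ in $\mathbb{H}_{m,b}$ with $v=0$ on $\partial\mathbb{H}_{m,b}$, where $u$ is the function from~\eqref{eq:u_formula}, then $v(x,y)=\E_{(x,y)}[\tau_{m,b}^2]$. Indeed, $\E[\tau^2]=2\E\int_0^\tau(\tau-s)\D s=2\E\int_0^\tau u(\bm W(s))\D s$ by the Markov property and Lemma~\ref{lem:torsion}.

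\textbf{Step 1: the candidate $v$.} Since $u$ vanishes on the hyperbola, we make the ansatz $v=u\cdot q$, where $q(x,y)=A+Bx^2+Cy^2$ is an even quadratic. Then $v$ is a quartic polynomial vanishing on $\partial\mathbb{H}_{m,b}$. We compute
\[
\Delta v=q\Delta u+2\nabla u\cdot\nabla q+u\Delta q,
\]
using $\Delta u=-2$. We then match coefficients of $1,x^2,y^2$ in $\tfrac12\Delta v=-2u$. This gives a $3\times3$ linear system in $A,B,C$. Its determinant should be proportional to $m^4-6m^2+1$, which is positive exactly when $m>1+\sqrt2$. Solving it should reproduce the stated bracket, which is divided by $3(m^4-6m^2+1)$. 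This step is routine algebra.

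\textbf{Step 2: It\^o and localization.} Apply It\^o's lemma to $v(\bm W(t))+2\int_0^t u(\bm W(s))\D s$. The stochastic integral has polynomial integrands, so stopped at $T_n=\tau_{m,b}\wedge n$ it is a true martingale. This gives
\[
\E_{(x,y)}[v(\bm W(T_n))]=v(x,y)-2\E_{(x,y)}\mleft[\int_0^{T_n}u(\bm W(s))\D s\mright].
\]
Since $u\ge0$ on $\mathbb{H}_{m,b}$, monotone convergence handles the right-hand integral as $n\to\infty$. The left side tends to $0$ almost surely, because $v$ vanishes on the boundary.

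\textbf{Step 3: uniform integrability (the main obstacle).} We need
\[
\sup_n\E|v(\bm W(T_n))|^p<\infty
\]
for some $p>1$. Since $v$ is a quartic, this requires bounds on $\E|W_i(T_n)|^{4p}$. By Burkholder--Davis--Gundy, as in~\eqref{eq:BDG}, these reduce to $\E[\tau_{m,b}^{2p}]$. By Lemma~\ref{lem:integrability}, $\E[\tau_{m,b}^{2p}]<\infty$ requires $2p<\pi/(2\alpha_m)$. For $m>1+\sqrt2$ we have $\alpha_m=2\arctan(1/m)<2\arctan(\sqrt2-1)=\pi/4$, so $\pi/(2\alpha_m)>2$ and some $p>1$ works. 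This is precisely where the threshold $m>1+\sqrt2$ enters.

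\textbf{Step 4: identify the limit.} We obtain $v(x,y)=2\E\int_0^{\tau}u(\bm W(s))\D s$. By the Markov property and Lemma~\ref{lem:torsion},
\[
\E\mleft[\int_0^\tau u(\bm W(s))\D s\mright]=\E\mleft[\int_0^\tau \E[\tau-s\mid\mathcal F_s]\D s\mright]=\E\mleft[\int_0^\tau(\tau-s)\D s\mright]=\tfrac12\E[\tau^2].
\]
Fubini is justified because $\E[\tau^2]<\infty$, which holds by Lemma~\ref{lem:integrability} since $m>1+\sqrt2$. Hence $v=\E_{(x,y)}[\tau_{m,b}^2]$, which is the claimed formula.
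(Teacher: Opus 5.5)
Your proposal is correct and follows the paper's proof essentially step for step: the same It\^o argument at $T_n=\tau_{m,b}\wedge n$, and the same BDG-based uniform integrability, in which $m>1+\sqrt2=\cot(\pi/8)$ is exactly what gives $\E_{(x,y)}[\tau_{m,b}^{2p}]<\infty$ for some $p>1$. The limit is also identified the same way, via the Markov property: $2\E_{(x,y)}\bigl[\int_0^{\tau_{m,b}}u(\bm W(s))\D{s}\bigr]=\E_{(x,y)}[\tau_{m,b}^2]$. The only difference is that you derive $v=u\,(A+Bx^2+Cy^2)$ by solving the linear system instead of just verifying $\tfrac12\Delta v=-2u$. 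Carrying out that algebra gives $C=\frac{5m^2-1}{3(m^4-6m^2+1)}$, $B=-\frac{m^2(m^2-5)}{3(m^4-6m^2+1)}$ and $A=\frac{m^2(m^2-5)(5m^2-1)b^2}{3(m^2-1)(m^4-6m^2+1)}$, which is exactly the stated formula.
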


\begin{proof}
The proof of Lemma~\ref{lem:second_moment} closely resembles that of Lemma~\ref{lem:torsion}, so we omit some of the more repetitive details. We start by defining
\begin{equation*}
    v(x,y) = \frac{u(x,y)}{3(m^4-6m^2+1)}\mleft(\frac{m^2(m^2-5)}{m^2-1}\Big((5m^2-1)b^2-(m^2-1)x^2\Big)+(5m^2-1)y^2\mright),
\end{equation*}
where $u(x,y)$ is the formula for the first moment of $\tau_{m,b}$ that was established in Lemma~\ref{lem:torsion}. Notice that neither of the denominators in the expression for $v(x,y)$ can be zero when $m>1+\sqrt{2}$. This is easy to verify by writing 
\begin{equation*}
m^4-6m^2+1=\mleft(m^2-3+2\sqrt{2}\mright)\mleft(m^2-3-2\sqrt{2}\mright)=\mleft(m^2-\big(1-\sqrt{2}\big)^2\mright)\mleft(m^2-\big(1+\sqrt{2}\big)^2\mright).
\end{equation*}

Similarly to the proof of Lemma~\ref{lem:torsion}, we can use It\^{o}'s lemma to write the almost sure equality
\begin{equation}\label{eq:Ito_v}
v\big(\bm{W}(t)\big)=v\big(\bm{W}(0)\big)+\int_0^t \nabla v\big(\bm{W}(s)\big)\cdot\D{\bm{W}(s)}+\frac{1}{2}\int_0^t\Delta v\big(\bm{W}(s)\big)\D{s},\quad \text{for all }t \ge 0.
\end{equation}
Moreover, since the partial derivatives $v_x(W_1(s),W_2(s))$ and $v_y(W_1(s),W_2(s))$ are polynomials in the components of $\bm{W}(s)$, both of which are one-dimensional Brownian motions adapted to the natural filtration of $\bm{W}$, it follows that the stochastic integral term appearing on the right-hand side of~\eqref{eq:Ito_v} is a genuine martingale with respect to this filtration.

For the Riemann integral term, a direct calculation shows that 
\begin{equation}\label{eq:v_Laplacian}
\frac{1}{2}\Delta v(x,y)=-2u(x,y),\quad \text{for all }(x,y)\in\mathbb{R}^2,
\end{equation}
where $u(x,y)$ is given in~\eqref{eq:u_formula}. Hence, taking the expected value of both sides of~\eqref{eq:Ito_v} evaluated at the bounded stopping times $T_n:=\tau_{m,b}\wedge n$ and using~\eqref{eq:v_Laplacian} results in
\begin{equation}\label{eq:v_prelimit}
\E _{(x,y)}\mleft[v\big(\bm{W}(T_n)\big)\mright]=v(x,y)-2\E _{(x,y)}\mleft[\int_0^{T_n}u\big(\bm{W}(s)\big)\D{s}\mright],\quad \text{for all }n \in\mathbb{N}.
\end{equation}

At this point we would like to take the limit as $n\to\infty$ of both sides of~\eqref{eq:v_prelimit}. We start with the term on the left-hand side. We know from Lemma~\ref{lem:torsion} that the function $u$ vanishes on the boundary of $\mathbb{H}_{m,b}$. Since the expression for $v(x,y)$ has a factor of $u(x,y)$, we must have $v$ vanish on the boundary of $\mathbb{H}_{m,b}$ as well. Similarly to the proof of Lemma~\ref{lem:torsion}, we can use this fact along with Lemma~\ref{lem:integrability} and path continuity to argue that $v(\bm{W}(T_n))$ converges to $0$ almost surely as $n\to\infty$. To conclude that the left-hand side of~\eqref{eq:v_prelimit} converges to $0$, we can again follow the proof of Lemma~\ref{lem:torsion}. However, now we are dealing with $v(\bm{W}(T_n))$, which is a polynomial of degree $4$ in the components of $\bm{W}(T_n)$. In this case, Lemma~\ref{lem:integrability} shows that 
\[
m>1+\sqrt{2}=\cot\mleft(\frac{\pi}{4\cdot 2}\mright)
\]
is precisely the condition we need in order to invoke the Burkholder--Davis-Gundy inequality and establish uniform integrability.

It remains to identify the limit of the expected value of the Riemann integral term in~\eqref{eq:v_prelimit}. Since the function $u$ is positive on $\mathbb{H}_{m,b}$ when $m>1$, it follows from monotone convergence that
\begin{equation}\label{eq:u_monotone}
\lim_{n\to\infty}2\E _{(x,y)}\mleft[\int_0^{T_n}u\big(\bm{W}(s)\big)\D{s}\mright]=2\E _{(x,y)}\mleft[\int_0^{\tau_{m,b}}u\big(\bm{W}(s)\big)\D{s}\mright].
\end{equation} 
Moreover, by~\eqref{eq:u_formula} and the strong Markov property, we have that 
\begin{equation}\label{eq:strong_markov}
\E _{(x,y)}\mleft[(\tau_{m,b}-s)\1_{\{s<\tau_{m,b}\}}\middle| \mathcal{F}_s\mright]=u\big(\bm{W}(s)\big)\1_{\{s<\tau_{m,b}\}}~~\text{almost surely.}
\end{equation}
We can use~\eqref{eq:strong_markov} with Tonelli's theorem twice, to write
\begin{align}
2\E _{(x,y)}\mleft[\int_0^{\tau_{m,b}}u\big(\bm{W}(s)\big)\D{s}\mright] &=2\int_0^\infty \E _{(x,y)}\mleft[u\big(\bm{W}(s)\big)\1_{\{s<\tau_{m,b}\}}\mright]\D{s}\nonumber \\
&=2\int_0^\infty\E _{(x,y)}\Bigg[\E _{(x,y)}\mleft[(\tau_{m,b}-s)\1_{\{s<\tau_{m,b}\}}\middle|\mathcal{F}_s\mright]\Bigg]\D{s}\nonumber \\
&=2\E _{(x,y)}\mleft[\int_0^{\tau_{m,b}}(\tau_{m,b}-s)\D{s}\mright]=\E _{(x,y)}\mleft[\tau_{m,b}^2\mright].\label{eq:second_moment}
\end{align}

Now we can combine~\eqref{eq:u_monotone} with~\eqref{eq:second_moment} to conclude that 
\[
\lim_{n\to\infty}2\E _{(x,y)}\mleft[\int_0^{T_n}u\big(\bm{W}(s)\big)\D{s}\mright]=\E _{(x,y)}\mleft[\tau_{m,b}^2\mright].
\]
Since we have established that the left-hand side of~\eqref{eq:v_prelimit} converges to $0$, this shows that $\E _{(x,y)}[\tau_{m,b}^2]=v(x,y)$.
\end{proof}


\section{Upper bound on $\E[\sfr]$}\label{sec:ub_exp_inr}
In this section the main goal is to prove the explicit upper bound from Theorem~\ref{thm:ub_exp_inr}, namely that $\E[\sfr] \le 2(4-\sqrt2)\pi^{-5/2}\zeta\mleft(3/2\mright)\beta\mleft(3/2\mright) \le 0.667652$. The proof is split into some technical propositions. 
\begin{remark}
    Monte Carlo estimation of the value $\E[\sfr]$ is $0.513$.
\end{remark}

The main approach relies on bounding $\sfr$ in terms of the ranges $R_1$ and $R_2$ of the Brownian motions $W_1$ and $W_2$ which are defined by
\begin{equation}\label{eq:def_of_range}
    R_i\coloneqq \sup_{0\le t\le 1} W_i(t)-\inf_{0\le t\le 1} W_i(t), \quad \text{for }\, i=1,2.
\end{equation}
More precisely, we prove the following lemma that is going to be used throughout the paper.

\begin{lemma}\label{lem:r_bound_range}
Let the ranges $R_1$ and $R_2$ be as in \eqref{eq:def_of_range}. It holds that
\begin{equation*}
    \sfr \le \min\{R_1, R_2\}/2, \quad \text{a.s.}
\end{equation*}
\end{lemma}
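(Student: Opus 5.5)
The plan is to show that the convex hull $\rmH=\rmH(1)$ lies inside the axis-parallel bounding rectangle
\[
Q=\Big[\inf_{0\le t\le 1}W_1(t),\,\sup_{0\le t\le 1}W_1(t)\Big]\times\Big[\inf_{0\le t\le 1}W_2(t),\,\sup_{0\le t\le 1}W_2(t)\Big],
\]
and then to bound the radius of any disc contained in $Q$ by half its shorter side. Since paths are continuous on $[0,1]$, all the suprema and infima are finite and attained almost surely, so $Q$ is a genuine compact rectangle with side lengths $R_1$ and $R_2$ as defined in~\eqref{eq:def_of_range}.

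For the inclusion, every point $\bm W(s)$ with $s\in[0,1]$ has first coordinate between $\inf W_1$ and $\sup W_1$, and likewise for the second coordinate. Hence the trace $\{\bm W(s):s\in[0,1]\}$ is contained in $Q$. Because $Q$ is convex and $\rmH$ is the smallest convex set containing the trace, we get $\rmH\subset Q$.

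Next take any closed disc $B(\bm c,\rho)\subset \rmH$; the inradius $\sfr$ is the supremum of such $\rho$. Then $B(\bm c,\rho)\subset Q$, so the two points $\bm c\pm\rho\bm e_1$ lie in $Q$. Their first coordinates differ by $2\rho$, which cannot exceed the horizontal extent $R_1$ of $Q$, giving $2\rho\le R_1$. The same argument with $\bm e_2$ gives $2\rho\le R_2$, so $\rho\le\min\{R_1,R_2\}/2$. Taking the supremum over admissible discs yields $\sfr\le\min\{R_1,R_2\}/2$ almost surely.

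There is no real obstacle here. The only points that need care are the almost sure finiteness of the ranges and the degenerate case where the hull has empty interior, in which case $\sfr=0$ and the bound is trivial.
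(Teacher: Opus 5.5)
Your proof is correct and follows the same route as the paper: you place the hull inside the axis-parallel bounding rectangle with side lengths $R_1$ and $R_2$, then note that any disc inside that rectangle has radius at most half its shorter side. Your argument with the two opposite points of the disc in each coordinate direction just makes explicit the fact about rectangles that the paper states without proof.
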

\begin{proof}
    Denote by $m_i = \inf_{0\le t \le 1}W_i(t)$, and by $M_i = \sup_{0\le t \le 1}W_i(t)$, $i = 1, 2$. Since the Brownian path is contained in the axis-parallel rectangle $[m_1,M_1]\times [m_2,M_2]$, its convex hull $\rmH$ is also contained in that rectangle. Therefore, every closed disk contained in $\rmH$ must also be contained in that rectangle. The largest disk contained in a rectangle of side lengths $R_1$ and $R_2$ has radius $\min\{R_1,R_2\}/2$. Hence, $\sfr\le \min\{R_1,R_2\}/2$ a.s. Notice also that taking expectations yields $\E[\sfr]\le \E[\min\{R_1,R_2\}]/2$.
\end{proof}

Throughout this subsection, we use the notation
\begin{equation}\label{eq:overline_phi_defn}
    \overline\Phi(u)=\int_u^\infty (2\pi)^{-1/2}e^{-v^2/2} \D{v}, \qquad \text{ for all }u>0.
\end{equation}

\begin{proposition}\label{prop:first_bound_red}
Let $\bm W(t)=(W_1(t),W_2(t))$ be a standard planar Brownian motion on the time interval $[0,1]$, and let $R_1$ and $R_2$ be the corresponding ranges. Then, for $\overline{\Phi}$ as in~\eqref{eq:overline_phi_defn}, it follows that
\[
\E[\sfr]\le \frac{1}{2} \E[\min\{R_1,R_2\}]= \frac12 \int_0^\infty \p(R_1\ge x)^2\D x=\frac12 \int_0^\infty
\mleft(
8\sum_{n=1}^\infty (-1)^{n+1} n  \overline\Phi(nx)
\mright)^2 \D x\eqqcolon I.
\]
\end{proposition}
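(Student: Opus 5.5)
The argument has three steps: the inequality from Lemma~\ref{lem:r_bound_range}, the tail-integral formula for the expected minimum, and Feller's series for the tail of the range.

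First, Lemma~\ref{lem:r_bound_range} gives $\sfr\le \min\{R_1,R_2\}/2$ almost surely. Since both sides are nonnegative, taking expectations yields $\E[\sfr]\le \frac12\E[\min\{R_1,R_2\}]$. The right-hand side is finite, because $\min\{R_1,R_2\}\le R_1$ and $\E[R_1]=2\sqrt{2/\pi}$.

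Second, for the nonnegative random variable $M=\min\{R_1,R_2\}$ we use the layer-cake formula $\E[M]=\int_0^\infty \p(M\ge x)\D x$, which follows from Tonelli's theorem. The event $\{M\ge x\}$ equals $\{R_1\ge x\}\cap\{R_2\ge x\}$. Here $R_i$ is a functional of $W_i$ alone, and $W_1,W_2$ are independent and identically distributed. Hence $\p(M\ge x)=\p(R_1\ge x)\p(R_2\ge x)=\p(R_1\ge x)^2$, which is the second equality.

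Third, we insert Feller's expansion \cite[Eq.~(3.7) \&~(3.8)]{Feller1951} for the tail of the range of standard Brownian motion on $[0,1]$:
\[
\p(R_1\ge x)=8\sum_{n=1}^\infty (-1)^{n+1} n\,\overline\Phi(nx),\qquad x>0.
\]
Squaring this and integrating gives the last expression. Feller states his distribution in terms of the range at time $t$. One should check the normalization: by Brownian scaling the density at $t=1$ involves $\overline\Phi(nx)$ rather than $\overline\Phi(nx/\sqrt t)$. One should also check that $\p(R_1\ge x)$ and $\p(R_1>x)$ coincide, which holds because $R_1$ has a density, so the value at single points of $x$ is irrelevant.

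The main obstacle is the convergence of the series. The expansion is alternating, and it converges only conditionally for small $x$, since the terms $n\overline\Phi(nx)$ decay only once $nx$ is large. So one should confirm that Feller's identity holds pointwise for every $x>0$. The terms eventually decay like $n\,e^{-n^2x^2/2}$, which gives absolute convergence for each fixed $x>0$, and the point $x=0$ is a Lebesgue-null set. No interchange of sum and integral is needed at this stage. Because the series is only substituted pointwise inside the integral, the equality holds as an identity of integrals of a bounded function with values in $[0,1]$, and $I$ is well defined.
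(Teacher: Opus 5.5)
Your proposal is correct and follows the paper's proof step for step: the bounding-rectangle inequality of Lemma~\ref{lem:r_bound_range}, then the layer-cake formula combined with the independence of $R_1$ and $R_2$, then pointwise substitution of Feller's expansion. One inaccuracy to fix: for each fixed $x>0$ the series $\sum_n n\,\overline\Phi(nx)$ converges absolutely, as you note a line later, so the remark that it ``converges only conditionally for small $x$'' is wrong; what degenerates as $x\to0^+$ is only the uniformity, since the absolute sum grows like $x^{-2}/4$.
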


\begin{remark}
\noindent\normalfont{(i)} The gain over the area bound $r\le \sqrt{\sfA/\pi}$ comes from using two independent one-dimensional constraints simultaneously. The convex hull must fit inside the random rectangle determined by the two coordinate ranges. In contrast, the area bound uses only the single scalar quantity $\sfA$.
\medskip 

\noindent\normalfont{(ii)} The argument used to prove Proposition~\ref{prop:first_bound_red} extends directly to Brownian motion in \(\mathbb R^d\), for any \(d\ge2\). Let $\bm W=(W_1,\dots,W_d)$ be a standard \(d\)-dimensional Brownian motion on the time interval \([0,1]\), and let $R_i
$, for $i=1,\dots,d$ be the one-dimensional coordinate ranges. If $\sfr_d$ denotes the inradius of the convex hull of the Brownian path, then $\sfr_d\le \min_{1\le i\le d}R_i/2$ a.s. Indeed, the Brownian path is contained in the axis-parallel box $\prod_{i=1}^d [m_i,M_i]$, for $m_i\coloneqq \inf_{0\le t\le 1}W_i(t)$, and $M_i\coloneqq \sup_{0\le t\le 1}W_i(t)$, and hence so is its convex hull. Any Euclidean ball contained in this box has radius at most half the shortest side length, which is \(\min_{1 \le i \le d} R_i/2\).

Taking expectations yields $\E[\sfr_d]\le \E[\min_{1\le i\le d}R_i]/2$. Since the coordinate processes \(W_1,\dots,W_d\) are independent standard one-dimensional Brownian motions, the random variables \(R_1,\dots,R_d\) are iid. Therefore, for any \(x\ge0\), we have $\p (\min_{1\le i\le d}R_i\ge x)
=
\prod_{i=1}^d \p(R_i\ge x)
=
\p(R\ge x)^d$, where \(R\) denotes the range of a standard one-dimensional Brownian motion on \([0,1]\). As in the proof of $d=2$, it follows that 
\[
\E[\sfr_d]
\le
\frac12\int_0^\infty
\bigg(
8\sum_{n=1}^\infty (-1)^{n+1}n  \overline\Phi(nx)
\bigg)^d \D x.
\]
Thus, the method extends to every dimension \(d\ge2\), reducing the problem to a one-dimensional integral involving the tail distribution of the one-dimensional Brownian range.

What appears to be special to the planar case is the subsequent exact evaluation of this integral in terms of classical special values of \(\zeta\) and \(\beta\). In higher dimensions, the same reduction remains valid and useful for effective upper bounds, but one should not expect an equally simple closed-form evaluation.
\end{remark}

\begin{proposition}\label{prop:square-tail-integral}
For $\overline\Phi(x)
$ as in~\eqref{eq:overline_phi_defn}, define $ S(x)\coloneqq 8\sum_{n=1}^\infty (-1)^{n+1}n  \overline\Phi(nx)$ for all $x>0$, and $I(n,m)\coloneqq \int_0^\infty \overline\Phi(nx)  \overline\Phi(mx)\D x$ for all $m,n \ge 1$. Then, it follows that
\begin{equation}\label{eq:abel-previous}
16\lim_{r\to1^-}\sum_{m,n=1}^\infty (-1)^{m+n}mn  I(n,m)  r^{m^2+n^2}
=
\frac{4-\sqrt2}{\pi^{5/2}}
\zeta \mleft(\frac32\mright)\beta \mleft(\frac32\mright),
\end{equation}
and consequently
\[
I=\frac12\int_0^\infty S(x)^2\D x
=
\frac{2(4-\sqrt2)}{\pi^{5/2}}
\zeta \mleft(\frac32\mright)\beta \mleft(\frac32\mright)
< 0.667652.
\]
\end{proposition}

\begin{proof}[Proof of Proposition~\ref{prop:first_bound_red}]
Since $W_1$ and $W_2$ are independent standard one-dimensional Brownian motions, their respective ranges
$R_1$ and $R_2$ are independent and identically distributed. For any nonnegative random
variable $Y$, it holds that $\E[Y]=\int_0^\infty \p(Y\ge x)\D x$. Hence, for $Y=\min\{R_1,R_2\}$, it holds that
\[
\E[\min\{R_1,R_2\}]
=
\int_0^\infty \p(\min\{R_1,R_2\}\ge x)\D x.
\]
By independence of $R_1$ and $R_2$, we see that
\[
\p(\min\{R_1,R_2\}\ge x)
=
\p(R_1\ge x)\p(R_2\ge x)
=
\p(R_1\ge x)^2, \quad \text{ for all }x\ge 0.
\]
Therefore, together with Lemma~\ref{lem:r_bound_range}, it follows that
\begin{equation}\label{eq:range-square-bound}
\E[\sfr] \le \frac12\int_0^\infty \p(R_1\ge x)^2\D x=\frac12\int_0^\infty \p(R\ge x)^2\D x,
\end{equation} where $R_1$ has the same law as the range $R\coloneqq \sup_{0\le t\le 1} W(t)-\inf_{0\le t\le 1} W(t)$ of a standard one-dimensional Brownian motion $W=\{W(t):t \ge 0\}$. By Feller's expansion~\cite[Eq.~(3.7) \&~(3.8)]{Feller1951}, we have that $\p(R\ge x)
=
8\sum_{n=1}^\infty (-1)^{n+1}n  \overline\Phi(nx)$, for all $x>0$. Hence,
\[
\E[\sfr]
\le
\frac12\int_0^\infty
\mleft(
8\sum_{n=1}^\infty (-1)^{n+1}n  \overline\Phi(nx)
\mright)^2\D x
= I.\qedhere
\]
\end{proof}

\begin{proof}[Proof of Proposition~\ref{prop:square-tail-integral}]
We divide the proof into three steps.\medskip

\noindent\textbf{Step 1:} We start by considering the evaluation of \(I(n,m)\). For \(n,m>0\), we note that
\[
\overline\Phi(nx)=\int_{nx}^\infty \phi(u)\D u,
\quad \text{and}\quad
\overline\Phi(mx)=\int_{mx}^\infty \phi(v)\D v,
\quad \text{ for } \quad 
\phi(t)\coloneqq \frac{1}{\sqrt{2\pi}}e^{-t^2/2}.
\]
By Tonelli's theorem, it follows that
\begin{equation}\label{eq:I(n,m)_polar}
    I(n,m) =
\int_0^\infty
\int_{nx}^\infty \phi(u)\D u
\int_{mx}^\infty \phi(v)\D v\D x =
\int_0^\infty  \int_0^\infty
\min \mleft(\frac{u}{n},\frac{v}{m}\mright)\phi(u)\phi(v)\D u\D v .
\end{equation}
Changing to polar coordinates \(u=r\cos\theta\), \(v=r\sin\theta\) in the first quadrant, and writing $\theta_0\coloneqq \arctan(m/n)$, we obtain that
\[
\min \mleft(\frac{u}{n},\frac{v}{m}\mright)
=
\begin{dcases}
\dfrac{r\sin\theta}{m}, & \text{for }0\le \theta\le \theta_0,\\
\dfrac{r\cos\theta}{n}, & \text{for }\theta_0\le \theta\le \dfrac{\pi}{2}.
\end{dcases}
\]
Hence, using the polar coordinates in~\eqref{eq:I(n,m)_polar}, it holds that
\[
I(n,m)=\frac{1}{2\pi}\int_0^\infty r^2e^{-r^2/2}\D r
\mleft(
\frac1m\int_0^{\theta_0}\sin\theta\D \theta
+
\frac1n\int_{\theta_0}^{\pi/2}\cos\theta\D{\theta}
\mright).
\]
Since $\int_0^\infty r^2e^{-r^2/2}\D r=\sqrt{\pi/2}$, $\cos\theta_0=n/\sqrt{m^2+n^2}$, and $\sin\theta_0=m/\sqrt{m^2+n^2}$, it follows that
\begin{equation}\label{eq:I-explicit}
I(n,m)=\frac{1}{2\sqrt{2\pi}}
\mleft(
\frac1n+\frac1m-\frac{\sqrt{m^2+n^2}}{mn}
\mright).
\end{equation}
\medskip

\noindent\textbf{Step 2:} We now evaluate the quadratically regularized sum corresponding to~\eqref{eq:abel-previous}. Define
\[
T(r)\coloneqq \sum_{m,n=1}^\infty (-1)^{m+n}mn  I(n,m)  r^{m^2+n^2},
\quad \text{ for all } 0<r<1.
\]
Using~\eqref{eq:I-explicit}, we obtain $mn  I(n,m)=(2\sqrt{2\pi})^{-1}(m+n-\sqrt{m^2+n^2})$,
and hence
\begin{align}
T(r)&=\frac{1}{2\sqrt{2\pi}}\Bigl(A(r)-B(r)\Bigr), \quad \text{ for all }0<r<1, \text{ where }\label{eq:T-split}\\
A(r)&\coloneqq \sum_{m,n\ge1}(-1)^{m+n}(m+n)r^{m^2+n^2}, \quad \text{and}\quad 
B(r)\coloneqq \sum_{m,n\ge1}(-1)^{m+n}\sqrt{m^2+n^2}  r^{m^2+n^2}. \nonumber
\end{align}

We first evaluate \(A(r)\) as \(r\to1^-\). The series defining \(A(r)\) is absolutely convergent, since
\begin{equation}\label{eq:abs_sum_A}
\sum_{m,n\ge1}(m+n)r^{m^2+n^2}
\le
2\bigg(\sum_{m\ge1}mr^{m^2}\bigg)\bigg(\sum_{n\ge1}r^{n^2}\bigg)<\infty,
\end{equation} and therefore $A(r)
=
2\big(\sum_{m\ge1}(-1)^m m r^{m^2}\big)
\big(\sum_{n\ge1}(-1)^n r^{n^2}\big)$. By writing \(r=e^{-t}\) with \(t>0\), it follows that
\begin{equation}\label{eq:decomp_A_GF}
A(e^{-t})=2F(t)G(t),
\quad \text{for} \quad 
F(t)\coloneqq \sum_{n\ge1}(-1)^n n e^{-tn^2},
\quad \text{and}\quad 
G(t)\coloneqq \sum_{n\ge1}(-1)^n e^{-tn^2}.
\end{equation} Next, we evaluate \(G(t)\) from~\eqref{eq:decomp_A_GF}. Set $\Theta(t)\coloneqq \sum_{n\in\mathbb Z}(-1)^n e^{-tn^2}=1+2G(t)$. By Poisson summation formula~\cite[Ch.~5, Thm~3.1]{MR1970295} applied to \(x\mapsto e^{-tx^2}e^{i\pi x}\), it follows that
\[
\Theta(t)
=
\sqrt{\frac{\pi}{t}}
\sum_{k\in\mathbb Z} e^{-\pi^2(k-1/2)^2/t}.
\]
Since \((k-1/2)^2\ge 1/4+|k|-1\) for all \(k\in\mathbb Z\), we have $\sum_{k\in\mathbb Z} e^{-\pi^2(k-1/2)^2/t}
\le
2e^{-\pi^2/(4t)}\sum_{j=0}^\infty e^{-\pi^2 j/t}$, and therefore \(\Theta(t)\to0\) as \(t\to0^+\). Hence
\begin{equation}\label{eq:G-limit}
\lim_{t\to0^+}G(t)=-1/2.
\end{equation}

Next, we evaluate \(F(t)\) from~\eqref{eq:decomp_A_GF}. Define $h_t(x)\coloneqq xe^{-tx^2}$ for all $x \ge 1$, implying that $-F(t)=\sum_{n=1}^\infty (-1)^{n+1}h_t(n)$. Since $\lim_{n\to\infty}h_t(2n)
=
\lim_{n\to\infty}h_t'(2n)
=0$, and \(h_t'''\in L^1(1,\infty)\),~\cite[Cor.~4.8]{Lampret2022} with \(p=3\) and \(m=1\), yields
\[
-F(t)
=
h_t(1)-\frac12h_t(2)+\frac14h_t'(2)+R_t, \quad \text{where}\quad |R_t|
\leq
\frac18\int_2^\infty |h_t'''(x)|\D x.
\]
Writing \(h(y)=ye^{-y^2}\), we have that $h_t(x)=t^{-1/2}h(\sqrt t x)$, $h_t'''(x)=t h'''(\sqrt t x)$,
and hence
\[
\int_2^\infty |h_t'''(x)|\D x
\leq
\sqrt t\int_0^\infty |h'''(y)|\D y
\to 0, \quad \text{as }t \to 0^+,
\] since $h'''(y)=(-6+24y^2-8y^4)e^{-y^2}\in L^1(0,\infty)$. Thus, \(R_t \to0\) as $t \to 0^+$, and $h_t(1)\to1$, $h_t(2)/2\to1$, and $h_t'(2)\to1$ as $t \to 0^+$. Consequently, $\lim_{t\to0^+}F(t)
=
-1+1-1/4 =
-1/4$. Combining this with~\eqref{eq:decomp_A_GF} and~\eqref{eq:G-limit}, we
conclude that
\begin{equation}\label{eq:A-limit}
\lim_{r\to1^-}A(r)=\frac14.
\end{equation}

Next we analyze \(B(r)\). Consider the Dirichlet series $E(s)$, defined as 
\begin{align*}
E(s)&\coloneqq \sum_{m,n=1}^\infty \frac{(-1)^{m+n}}{(m^2+n^2)^s},
\quad \text{ and } \quad \widetilde E(s)\coloneqq 
\sum_{(m,n)\in\mathbb Z^2\setminus\{(0,0)\}}
\frac{(-1)^{m+n}}{(m^2+n^2)^s},
\quad \text{for } \Re s>1.
\end{align*}
Since \(\Re s>1\), the series defining \(\widetilde E(s)\) is absolutely convergent, so we may regroup terms according to the decomposition
\[
\mathbb Z^2\setminus\{(0,0)\}
=
\{(m,n)\in\mathbb Z^2:m,n\neq0\}
\cup
\{(m,0):m\neq0\}
\cup
\{(0,n):n\neq0\}.
\]
The off-axis part splits into the four open quadrants, and each quadrant contributes $\sum_{m,n\ge1}(-1)^{m+n}(m^2+n^2)^{-s}=E(s)$, since replacing \(m,n\) by \(|m|,|n|\) leaves both \(m^2+n^2\) and \((-1)^{m+n}\) unchanged. Hence, the total off-axis contribution is \(4E(s)\). On the coordinate axes, we have
\begin{align*}
    \sum_{m\neq0}\frac{(-1)^m}{m^{2s}}
&=
2\sum_{k=1}^\infty \frac{(-1)^k}{k^{2s}}
=
-2\eta(2s), \quad \text{for} \quad 
\eta(s)\coloneqq \sum_{k=1}^\infty \frac{(-1)^{k-1}}{k^s}
=(1-2^{1-s})\zeta(s),
\end{align*}
where $\eta$ is the Dirichlet eta function and $\zeta(s)$ is the Riemann zeta function defined in~\eqref{eq_riemann_zeta_function_def}. Therefore, $\widetilde E(s)=4E(s)-4\eta(2s)$. On the other hand, the twisted lattice sum \(\widetilde E(s)\) has the classical factorization
(see, e.g.,~\cite[(1.7)]{Burrows2022} or~\cite{MR407483})
\begin{equation}\label{eq:defn_beta_fct}
    \widetilde E(s)=-4  \eta(s)\beta(s),
\quad\text{for all } \Re s>1,
\end{equation} where $\beta(s)$ is the Dirichlet beta function defined in~\eqref{eq:defn_beta_diri}. Hence,
\begin{equation}\label{eq:E-factor}
E(s)=\eta(2s)-\eta(s)\beta(s),
\quad \text{for all } \Re s>1.
\end{equation}

Since \(\eta\) and \(\beta\) are entire, the function $s\mapsto  \eta(2s)-\eta(s)\beta(s)$ is entire. By~\eqref{eq:E-factor}, this entire function agrees with the Dirichlet series \(E(s)\) on the half-plane \(\Re s>1\), where the latter converges absolutely. Hence, \(\eta(2s)-\eta(s)\beta(s)\) defines the unique analytic continuation of \(E(s)\) from \(\Re s>1\) to all \(s\in\mathbb C\). We continue to denote this entire continuation by \(E(s)\). In particular, \(E(s)\) is holomorphic at \(s=-1/2\). This is also consistent with the general analytic continuation theory for twisted Epstein zeta functions; see~\cite[Ch.~I, \S5, Thm.~3 \& Eq.~(61)]{SiegelANT}.

Now write \(r=e^{-t}\) with \(t>0\). Then, $B(e^{-t})
=
\sum_{m,n=1}^\infty 
(-1)^{m+n}\sqrt{m^2+n^2}\,
e^{-t(m^2+n^2)}$. Let \(c>3/2\), and note by the Mellin inversion~\cite[Thm~2]{FlajoletGourdonDumas}, that $e^{-x}
=
(2\pi i)^{-1}
\int_{c-i\infty}^{c+i\infty}
\Gamma(w)x^{-w}\D w$ for all $x>0$, and therefore
\[
\sqrt{m^2+n^2}\,e^{-t(m^2+n^2)}
=
\frac{1}{2\pi i}
\int_{c-i\infty}^{c+i\infty}
\Gamma(w)t^{-w}
(m^2+n^2)^{-(w-1/2)}
\D w.
\]
Since \(c>3/2\), it follows directly that $\sum_{m,n=1}^\infty 
(m^2+n^2)^{-(c-1/2)}
<\infty$. Moreover, by Stirling's formula on vertical lines~\cite[Eq.~(5.11.9)]{NIST:DLMF}, it follows that $|\Gamma(c+\mathrm{i} y)| \sim \sqrt{2 \pi}|y|^{c-(1 / 2)} e^{-\pi|y| / 2}$ as $|y| \to \infty$ uniformly for bounded real values of $c$, and hence $\int_{-\infty}^{\infty}
|\Gamma(c+iy)|\D y<\infty$. Thus, Fubini's theorem applies, and summing over \(m,n\ge1\), gives
\begin{equation}\label{eq:B-Mellin-representation}
B(e^{-t})
=
\frac{1}{2\pi i}
\int_{c-i\infty}^{c+i\infty}
\Gamma(w)t^{-w}
E\mleft(w-\frac12\mright)
\D w.
\end{equation} We now apply the converse mapping theorem for Mellin transforms
\cite[Thm~4(i)]{FlajoletGourdonDumas}.
To verify its hypotheses, fix any \(\delta\in(0,1)\). Recall that $E(s)=\eta(2s)-\eta(s)\beta(s)$ gives the entire continuation of \(E\). Hence, $M(w)
\coloneqq
\Gamma(w)E(w-1/2)$ is meromorphic in the strip $-\delta<\Re w<c$,  and, since \(E\) is entire, its only possible pole in this strip is at \(w=0\), coming from the
simple pole of \(\Gamma(w)\). It remains to verify the decay condition required in~\cite[Thm~4(i)]{FlajoletGourdonDumas}. First, Stirling's formula uniformly on bounded vertical strips~\cite[Eq.~(5.11.9)]{NIST:DLMF} gives $|\Gamma(\sigma+\mathrm{i} y)| \sim \sqrt{2 \pi}|y|^{\sigma-(1 / 2)} e^{-\pi|y| / 2}$ as $|y| \to \infty$ uniformly for \(\sigma\) in compact intervals. Convexity bounds for the Riemann zeta function~\cite[Ch.~V, \S5.1]{TitchmarshZeta} imply that \(\zeta(s)\) has at most polynomial growth as \(|\Im s|\to\infty\), uniformly when \(\Re s\) ranges over a fixed compact interval. Likewise, the convexity bound for Dirichlet
\(L\)-functions due to~\cite{Davenport1931}, together with the functional equation for \(L(s,\chi_4)\), implies the corresponding polynomial growth for \(\beta(s)=L(s,\chi_4)\). Since
\[
\eta(s)=(1-2^{1-s})\zeta(s),
\]
and the factor \(1-2^{1-s}\) is bounded when \(\Re s\) ranges over a fixed
compact interval, \(\eta(s)\) and \(\beta(s)\) have at most polynomial growth
as \(|\Im s|\to\infty\), uniformly in every fixed vertical strip. It follows from $E(s)=\eta(2s)-\eta(s)\beta(s)$ that, for every fixed strip \(a\le \Re s\le b\), there exist constants
\(C,N>0\) such that $|E(s)|\le C(1+|\Im s|)^N$. Consequently, uniformly for $-\delta\le\Re w\le\eta$, where \(\eta\in(3/2,c)\) is fixed, we have
\[
\left|
\Gamma(w)
E\mleft(w-\frac12\mright)
\right|
=
O\mleft(
(1+|\Im w|)^N e^{-\pi|\Im w|/2}
\mright), \quad \text{ as } |\Im w| \to \infty.
\]
In particular, $\Gamma(w)E (w-1/2)
=
O(|w|^{-q})$ for every \(q>0\) as \(|\Im w|\to\infty\), uniformly in the above strip. Thus, the decay hypothesis of~\cite[Thm~4(i)]{FlajoletGourdonDumas}
is satisfied. The singular expansion of \(M(w)\) in
\(-\delta<\Re w<\eta\) therefore consists only of the pole at \(w=0\). Since $\mathrm{Res}_{w=0}\Gamma(w)=1$, we have $\mathrm{Res}_{w=0}
[
\Gamma(w)E(w-1/2)
]
=
E(-1/2)$. Here, \(\operatorname{Res}_{w=w_0} f(w)\) denotes the residue of a meromorphic
function \(f\) at \(w_0\), i.e., the coefficient of \((w-w_0)^{-1}\) in the
Laurent expansion of \(f\) around \(w_0\).
Applying
\cite[Thm~4(i)]{FlajoletGourdonDumas}
to~\eqref{eq:B-Mellin-representation}, then yields
\[
B(e^{-t})
=
E\mleft(-\frac12\mright)
+
O(t^\delta),
\qquad t\to0^+.
\]
Since \(\delta\in(0,1)\) was arbitrary, in particular
\begin{equation}\label{eq:B-limit}
\lim_{r\to1^-}B(r)
=
E\mleft(-\frac12\mright).
\end{equation}

Applying~\eqref{eq:E-factor} at \(s=-1/2\), we obtain $E (-1/2)=\eta(-1)-\eta (-1/2)\beta (-1/2)$. Since \(\eta(-1)=1/4\), \(\zeta(-1/2)=-(4\pi)^{-1}\zeta(3/2)\), and
\(\beta(-1/2)=\pi^{-1}\beta(3/2)\), it follows that
\begin{align}
    \eta \mleft(-\frac12\mright)
&=
\bigl(1-2^{3/2}\bigr)\zeta \mleft(-\frac12\mright)
=
\frac{2^{3/2}-1}{4\pi}  \zeta \mleft(\frac32\mright), \text{ and hence}\nonumber\\
E \mleft(-\frac12\mright)
&=
\frac14-\frac{2^{3/2}-1}{4\pi^2}  
\zeta \mleft(\frac32\mright)\beta \mleft(\frac32\mright).\label{eq:E-minus-half-evaluated}
\end{align}

Combining~\eqref{eq:T-split},~\eqref{eq:A-limit},~\eqref{eq:B-limit}, and~\eqref{eq:E-minus-half-evaluated}, we obtain~\eqref{eq:abel-previous}:
\begin{align*}
16\lim_{r\to1^-}T(r)
&=
\frac{8}{\sqrt{2\pi}}
\mleft(
\frac14-E \mleft(-\frac12\mright)
\mright)=
\frac{8}{\sqrt{2\pi}}
\cdot
\frac{2^{3/2}-1}{4\pi^2}  
\zeta\mleft(\frac32\mright)\beta \mleft(\frac32\mright)=
\frac{4-\sqrt2}{\pi^{5/2}}
\zeta\mleft(\frac32\mright)\beta \mleft(\frac32\mright).
\end{align*}
\medskip

\noindent\textbf{Step 3:} In the final step, we will evaluate \(\int_0^\infty S(x)^2\D x\). For \(0<r\le1\) and all $x>0$, define
\[
S_r(x)\coloneqq 8\sum_{n=1}^\infty (-1)^{n+1}n  r^{n^2}  \overline\Phi(nx).
\]For each fixed \(x>0\), the series defining \(S_r(x)\) is absolutely
convergent, uniformly in \(0<r\le1\), since
\[
\sum_{n=1}^\infty n r^{n^2}\overline\Phi(nx)
\le
\sum_{n=1}^\infty n\overline\Phi(nx)
<\infty.
\]
Hence, by dominated convergence for series, $S_r(x)\to S(x)$ as $r \to 1^-$ for every \(x>0\). We next establish an integrable bound for \(S_r(x)\), uniformly in \(0<r\le1\).

First, suppose that \(x\ge 1\), and let $a_n^{(r)}(x)\coloneqq n  r^{n^2}  \overline\Phi(nx)$. We claim that \(n\mapsto a_n^{(r)}(x)\) is decreasing. Indeed, if $h(y)\coloneqq y  \overline\Phi(y)$, then $h'(y)=\overline\Phi(y)-y\phi(y)$. By Mills' ratio, it follows that $\overline\Phi(y)<\phi(y)/y$ for all $y \ge 1$, and hence \(h'(y)<0\), so \(h\) is strictly decreasing. Therefore, it holds that $n\mapsto n  \overline\Phi(nx)=x^{-1}h(nx)$ is decreasing, and so is \(n\mapsto a_n^{(r)}(x)\) because \(0<r\le1\). Since
\(a_n^{(r)}(x)\to0\) as \(n\to\infty\), the Leibniz criterion gives
\begin{equation}\label{eq:Sr-large-x-bound}
|S_r(x)|\le 8  a_1^{(r)}(x)\le 8  \overline\Phi(x),
\quad \text{ for all } x\ge 1,\ 0<r\le1.
\end{equation}

It remains to obtain a uniform bound for \(0<x<1\). Write \(r=e^{-t}\) for 
\(t\ge0\), and set $f_{x,t}(u)\coloneqq u e^{-tu^2}\overline\Phi(xu)$. Since
\(f_{x,t}(u),f'_{x,t}(u)\to0\) as \(u\to\infty\) and
\(f''_{x,t}\in L^1([1,\infty))\),~\cite[Cor.~4.6]{Lampret2022} with $m=1$, implies
\[
\sum_{n=1}^\infty(-1)^{n+1}f_{x,t}(n)
=
f_{x,t}(1)-\frac12f_{x,t}(2)
+\frac14f'_{x,t}(2)+R_{x,t}, \quad \text{where}\quad |R_{x,t}|
\le
\frac5{12}\int_2^\infty |f''_{x,t}(u)|\D u.
\] For \(0<x<1\), the quantities \(f_{x,t}(1)\), \(f_{x,t}(2)\), and
\(f'_{x,t}(2)\) are uniformly bounded in \(x\) and \(t\). Moreover, setting $\lambda=t x^{-2}$, and $G_\lambda(y)=y e^{-\lambda y^2}\overline\Phi(y)$, we have $f''_{x,t}(u)=xG_\lambda''(xu)$, and hence $\int_2^\infty |f''_{x,t}(u)|\D u
\le
\int_0^\infty |G_\lambda''(y)|\D y$. A direct differentiation gives
\begin{equation}\label{eq:defn_g''_lamvda}
    G_\lambda''(y)
=
e^{-\lambda y^2}
\left[
(4\lambda^2y^3-6\lambda y)\overline\Phi(y)
+
\bigl((4\lambda+1)y^2-2\bigr)\phi(y)
\right],
\end{equation}
whose \(L^1(0,\infty)\)-norm is bounded uniformly in \(\lambda\ge0\). Indeed, using \(\overline\Phi(y)\le1/2\),
\(\lambda y^2e^{-\lambda y^2}\le e^{-1}\), $\int_0^\infty y^3e^{-\lambda y^2}\D y=(2\lambda^2)^{-1}$, and $\int_0^\infty ye^{-\lambda y^2}\D y=(2\lambda)^{-1}$
for \(\lambda>0\), together with $\int_0^\infty y^2\phi(y)\D y=1/2$, and $\int_0^\infty\phi(y)\D y=1/2$, we obtain $\sup_{\lambda\ge0}
\int_0^\infty |G_\lambda''(y)|\D y
\le
4+2/e$.

Consequently, since $\sup_{\lambda \ge 0}\|G''_\lambda\|_{L^1(0,\infty )}<\infty$, there exists an absolute constant \(C>0\) such that $|S_r(x)|\le C$ for all $0<x<1$ and $0<r\le1$. Combining this with~\eqref{eq:Sr-large-x-bound}, we obtain
\[
|S_r(x)|
\le
C\1_{(0,1)}(x)
+
8\overline\Phi(x)\1_{[1,\infty)}(x), 
\qquad \text{for all } x>0,\, 0<r\le1.
\]
The right-hand side is in $L^2(0,\infty)$, and since \(S_r(x)\to S(x)\) pointwise as \(r\to1^-\), dominated convergence therefore implies that
\begin{equation}\label{eq:dominated}
\int_0^\infty S(x)^2\D x
=
\lim_{r\to1^-}\int_0^\infty S_r(x)^2\D x.
\end{equation}

We now consider the right-hand side of~\eqref{eq:dominated}. For fixed \(0<r<1\), the series $\sum_{m,n\ge1} mn  r^{m^2+n^2}I(n,m)$ is absolutely convergent. Indeed, by~\eqref{eq:I-explicit}, it follows that $0\le I(n,m)\le (2\sqrt{2\pi})^{-1}(n^{-1}+m^{-1})$, implying that
\[
\sum_{m,n\ge1} mn  r^{m^2+n^2}I(n,m)
\le
\frac{1}{2\sqrt{2\pi}}
\sum_{m,n\ge1}(m+n)r^{m^2+n^2}
<\infty.
\]
Therefore, Fubini's theorem yields
\begin{align}
\int_0^\infty S_r(x)^2\D x
&=
64\sum_{m,n\ge1}(-1)^{m+n}mn  r^{m^2+n^2}
\int_0^\infty \overline\Phi(nx)\overline\Phi(mx)\D x \notag\\
&=
64\sum_{m,n\ge1}(-1)^{m+n}mn  I(n,m)  r^{m^2+n^2}. \label{eq:Sr-square}
\end{align}
Letting \(r\to1^-\) and using~\eqref{eq:abel-previous},~\eqref{eq:dominated}, and~\eqref{eq:Sr-square}, we conclude the proof of the proposition:
\begin{equation*}
\int_0^\infty S(x)^2\D x
=
64\lim_{r\to1^-}\sum_{m,n\ge1}(-1)^{m+n}mn  I(n,m)  r^{m^2+n^2} =
\frac{4(4-\sqrt2)}{\pi^{5/2}}
\zeta \mleft(\frac32\mright)\beta \mleft(\frac32\mright). \qedhere
\end{equation*}
\end{proof}

\begin{proof}[Proof of Theorem~\ref{thm:ub_exp_inr}]
    The proof follows directly from Proposition~\ref{prop:first_bound_red} and Proposition~\ref{prop:square-tail-integral}.
\end{proof}

\section{Bounds on $\E[\Theta^{\sfr}]$}\label{sec:bb_exp_inv_inr}
In this section we prove Theorem~\ref{thm:up_low_exp_inv_inr} which claims that $2.61378 \le \E[\Theta^{\sfr}] \le 18.7460$.

\begin{remark}
    Monte Carlo estimation of the value $\E[\Theta^{\sfr}]$ is $4.176$.
\end{remark}

Let $(R_1(t) : t \ge 0)$ and $(R_2(t) : t \ge 0)$ denote, as in \eqref{eq:def_of_range}, the coordinate-wise range processes, and let $(\Theta_1(y) : y \ge 0)$ and $(\Theta_2(y) : y \ge 0)$ denote the corresponding inverse range processes, i.e.
\begin{equation}\label{eq:Theta_1,2_BM}
    \Theta_i(y) = \inf\{t \ge 0 : R_i(t) > y\}, \quad y \ge 0,\ i = 1,2.
\end{equation}
Note that, for simplicity, we write $\Theta_i$ instead of $\Theta^{R_i}$ (which was the general notation introduced in~\eqref{eq:gen_inv_proc}), and we again write $R_i = R_i(1)$, and $\Theta_i = \Theta_i(1)$, $i = 1, 2$. Using this notation we first prove the following lemma.
\begin{lemma}\label{lem:lb_exp_inv_inr}
    It holds that
    \begin{equation*}
        \E[\Theta^\sfr] \ge 4 \E[\max\{\Theta_1, \Theta_2\}] =2 + \frac{8}{\pi} \int_0^{\infty} \mleft( 1 - \frac{4}{(\cos t + \cosh t)^2} \mright) \frac{1}{t^3} \D t \ge 2.61378.
    \end{equation*}
\end{lemma}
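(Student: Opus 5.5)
The plan has three steps: reduce to the inverse coordinate ranges via Lemma~\ref{lem:r_bound_range} and scaling, evaluate $\E[\max\{\Theta_1,\Theta_2\}]$ through a characteristic-function formula, and finish numerically.

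\textbf{Reduction.} By Lemma~\ref{lem:r_bound_range}, applied at every time $t$ (the argument is the same for $\rmH(t)$ as for $\rmH(1)$), we have $\sfr(t)\le \min\{R_1(t),R_2(t)\}/2$. Hence the event $\{\sfr(t)>1\}$ forces $R_1(t)>2$ and $R_2(t)>2$. This gives $\Theta^{\sfr}(1)\ge \max\{\Theta_1(2),\Theta_2(2)\}$ almost surely. Brownian scaling, applied jointly to $(W_1,W_2)$, gives $(\Theta_1(2),\Theta_2(2))\overset{d}{=}(4\Theta_1,4\Theta_2)$, and therefore $\E[\Theta^\sfr]\ge 4\E[\max\{\Theta_1,\Theta_2\}]$.

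\textbf{Computing the maximum.} I will use the identity $\max\{X,Y\}=(X+Y+|X-Y|)/2$ for the i.i.d.\ variables $X=\Theta_1$, $Y=\Theta_2$. This splits the expectation into two pieces.
\begin{itemize}
\item The first piece is $\E[\Theta_1]$. The classical inverse-range results (Imhof, or~\cite{CPS}) give $\E[\Theta_i(a)]=a^2/2$, so $\E[\Theta_1]=1/2$. After multiplying by $4$, this produces the constant $2$ in the statement.
\item For the second piece I will use the standard formula for independent copies with characteristic function $\varphi$:
\[
\E|X-Y|=\frac{2}{\pi}\int_0^\infty \frac{1-|\varphi(u)|^2}{u^2}\D u .
\]
It follows from $|z|=\frac{2}{\pi}\int_0^\infty (1-\cos(uz))u^{-2}\D u$ together with Fubini, since $\E[\cos(u(X-Y))]=|\varphi(u)|^2$.
\item The law of $\Theta_1$ is determined by its Laplace transform $\E[e^{-\lambda\Theta_1}]=\sech^2\big(\sqrt{\lambda/2}\big)$ (Imhof; also used in~\cite{CPS}). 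By analytic continuation, $\varphi(u)=\sech^2\big(\sqrt{-iu/2}\big)$.
\item Substituting $u=t^2$ gives $\sqrt{-iu/2}=(1-i)t/2$. The identity $\cosh\frac{(1-i)t}{2}\cosh\frac{(1+i)t}{2}=\frac12(\cosh t+\cos t)$ then yields $|\varphi(t^2)|^2=4/(\cos t+\cosh t)^2$.
\item With $\D u/u^2=2t\,\D t/t^4$, the second piece becomes $\frac{2}{\pi}\int_0^\infty\big(1-\frac{4}{(\cos t+\cosh t)^2}\big)t^{-3}\D t$. Halving it and multiplying by $4$ gives exactly $\frac{8}{\pi}\int_0^\infty(\cdots)t^{-3}\D t$, which matches the statement.
\end{itemize}

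\textbf{Numerics.} The final step is to evaluate this integral numerically; the result should give the bound $2.61378$. Near $t=0$ the integrand is bounded, because $1-4/(\cos t+\cosh t)^2=O(t^2)$. At infinity it decays like $t^{-3}$. So the quadrature is routine, and since it only needs to give a lower bound, I will truncate or round downward.

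\textbf{Main obstacle.} I expect the delicate part to be the rigorous justification of the characteristic function. This means citing the Laplace transform of the inverse range correctly and justifying the analytic continuation from $\lambda>0$ to $\lambda=-iu$. The continuation is legitimate because $\Theta_1$ has exponential moments up to the first zero of $\cosh$, so the Laplace transform is analytic on a half-plane $\Re\lambda>-\lambda_0$ with $\lambda_0>0$. Getting the constants exactly right also needs care.
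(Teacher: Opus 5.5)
Your proof is correct. The reduction step is the same as the paper's: Lemma~\ref{lem:r_bound_range} applied at every time $t$, the identification $\inf\{t:\min\{R_1(t),R_2(t)\}>2\}=\max\{\Theta_1(2),\Theta_2(2)\}$, and Brownian scaling.

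You differ from the paper in how you evaluate $\E[\max\{\Theta_1,\Theta_2\}]$. The paper quotes the formula for $\E[\min\{\Theta_1,\Theta_2\}]$ from \cite[Lemma 1.7]{CPS} and combines it with $\min+\max=\Theta_1+\Theta_2$ and $\E[\Theta_1]=1/2$. You instead derive the integral from scratch, using $\max=(X+Y+|X-Y|)/2$, the Fourier representation of $\E|X-Y|$, and the continuation of Imhof's Laplace transform to the imaginary axis. The two computations use the same decomposition, since $\min=(X+Y-|X-Y|)/2$, so in effect you reprove the cited lemma. This makes your argument self-contained and shows where the kernel $4/(\cos t+\cosh t)^2$ comes from. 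The paper's citation is shorter.

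Two slips in your write-up should be fixed, though neither affects the result.

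\textbf{The factor of $2$.} Since $\D u/u^2=2t^{-3}\D t$, you get $\E|X-Y|=\frac{4}{\pi}\int_0^\infty\bigl(1-4/(\cos t+\cosh t)^2\bigr)t^{-3}\D t$. Halving this and multiplying by $4$ gives $\frac{8}{\pi}$. As you wrote it, $\frac{2}{\pi}\cdot\frac12\cdot 4=\frac{4}{\pi}$, so you have either dropped the Jacobian's factor $2$ or halved twice.

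\textbf{Behaviour near $t=0$.} The bound $1-4/(\cos t+\cosh t)^2=O(t^2)$ would only give an integrand of order $1/t$, which is not integrable at $0$. The correct expansion comes from $\cos t+\cosh t=2\sum_{k\ge 0}t^{4k}/(4k)!=2+t^4/12+O(t^8)$. Hence the numerator is $t^4/12+O(t^8)$ and the integrand behaves like $t/12$. The same series gives $\cos t+\cosh t\ge 2$, so the integrand is nonnegative. That nonnegativity is what justifies truncating the domain of integration to obtain a rigorous lower bound.
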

\begin{proof}
    The convex hull of the path up to time $t$ is contained in the axis--aligned rectangle with side lengths $R_1(t)$ and $R_2(t)$, and therefore, by Lemma~\ref{lem:r_bound_range}, its inradius $\sfr(t)$ satisfies $\sfr(t) \le \min\{R_1(t),R_2(t)\}/2$ a.s. Hence, we need to have $\min\{R_1(t), R_2(t)\} > 2$ to be able to have $\sfr(t) > 1$. Therefore
    \begin{equation}\label{eq:Thetar_lb_step1}
        \Theta^{\sfr} \ge \inf\mleft\{t \ge 0 : \min\{R_1(t), R_2(t)\} > 2\mright\} = \max\{\Theta_1(2), \Theta_2(2)\} \stackrel{d}{=} 4 \max\{\Theta_1, \Theta_2\},
    \end{equation}
    where we used the well-known scaling of the inverse range process, which states that $\Theta_1(\lambda y) \stackrel{d}{=} \lambda^2 \Theta_1(y)$ for every $\lambda > 0$ and $y \ge 0$ (see e.g.~\cite{Vallois}). Furthermore, we know that $\E[\Theta_1] = \E[\Theta_2] = 1/2$, which is easily obtained using the Laplace transform of the inverse range process from~\cite{Imhof}. Also, from~\cite[Lemma 1.7]{CPS} we know that
    \begin{equation}\label{eq:inv_range_first_mom}
        \E[\min\{\Theta_1, \Theta_2\}] = \frac{1}{2} - \frac{2}{\pi} \int_0^{\infty} \mleft( 1 - \frac{4}{(\cos t + \cosh t)^2} \mright) \frac{1}{t^3} \D t \approx 0.346554.
    \end{equation}
    Combining this with the trivial relation $\min\{\Theta_1, \Theta_2\} + \max\{\Theta_1, \Theta_2\} = \Theta_1 + \Theta_2$, we get $\E[\max\{\Theta_1, \Theta_2\}] = 1 - \E[\min\{\Theta_1, \Theta_2\}] \approx 0.653446$. Plugging this into~\eqref{eq:Thetar_lb_step1}, we get $\E[\Theta^{\sfr}] \ge 2.61378$.
    \end{proof}

\subsection{Upper bound on expectation of inverse inradius}\label{sec:up_bound_inrad_constr}
We bound the inverse inradius time from above by using a two-stage construction that embeds a triangle with a prescribed minimum inradius into the convex hull of the path of the Brownian motion; see Figure~\ref{fig:construction}. At the instant this embedding is achieved, the inradius of the convex hull of the Brownian path at least the prescribed minimum value, hence the inverse inradius time has already passed. Our construction is a refinement of the method used to prove~\cite[Proposition 1.11]{CPS}, and it significantly improves upon that upper bound. A similar multi-stage construction was used to bound other inverse processes in two and higher dimensions; see~\cite{CPS, high_dim_hulls}.

\begin{proposition}\label{prop:two_stage_inradius_upper_bound}
Fix $a>0$, $b>0$, and $m>1$. Let
\begin{equation}\label{eq:defn_Q_m,b,a}
    Q_{m,b,a}(x)=(x-1)^2\mleft(a^2(x-2)-x\mright)-m^2(x-2)(x^2-b^2),
\end{equation}
and assume that $ Q_{m,b,a}(x)\ge 0$ for all $x \ge b$. Let $(\Theta_1(y) : y\ge 0)$ and $(\Theta_2(y) : y\ge 0)$ be the inverse range processes of the coordinates of $\bm W$ defined in~\eqref{eq:Theta_1,2_BM}, and let $T_a \coloneqq \min\{\Theta_1(2a),\Theta_2(2a)\}$. At time $T_a$, recenter and rotate the path so that $\bm W (T_a)=A=(0,a)$ and the line segment of length $2a$ already contained in the convex hull is sent to the segment with endpoints $A=(0,a)$ and $B=(0,-a)$. Let $\tau_{m,b,a}$ be the first exit time from $\mathbb H_{m,b}$ of this transformed Brownian motion after $T_a$. Then,
\[
    \Theta^{\mathsf r}
    \le
    \min\{\Theta_1(2a),\Theta_2(2a)\}
    +
    \tau_{m,b,a} \quad \text{a.s.}
\]
\end{proposition}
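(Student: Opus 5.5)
The plan is to show that at the random time $T_a+\tau_{m,b,a}$ the convex hull of the path contains a triangle whose inradius is at least $1$. Since the inradius is monotone under inclusion of convex sets, this forces $\Theta^{\sfr}\le T_a+\tau_{m,b,a}$.

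\textbf{Stage one.} At time $T_a=\min\{\Theta_1(2a),\Theta_2(2a)\}$ one coordinate range has just reached $2a$. The current point $\bm W(T_a)$ is a new running maximum or minimum of that coordinate, so there is an earlier path point at distance exactly $2a$ from it along that coordinate axis. The segment joining them lies in $\rmH(T_a)$. Translation and rotation preserve distances, convex hulls and inradii, and by the strong Markov property and rotation invariance the transformed post-$T_a$ process is again a planar Brownian motion, started at $A=(0,a)$. The starting point is admissible because $A\in\mathbb H_{m,b}$: indeed $0-a^2/m^2<b^2$. We have $m>1$, so $\tau_{m,b,a}<\infty$ a.s. by Lemma~\ref{lem:integrability}. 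At the exit time the Brownian motion sits at a point $P=(x,y)\in\partial\mathbb H_{m,b}$. Reflecting in the $y$-axis if necessary (this maps $A,B$ to themselves), we may take $x\ge b$ and $y^2=m^2(x^2-b^2)$. The hull now contains the triangle $ABP$.

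\textbf{Stage two.} Next I check that the triangle $ABP$ has inradius at least $1$. Its area is $ax$ and its perimeter is $2a+|PA|+|PB|$. The condition $\mathrm{inradius}=2\,\mathrm{Area}/\mathrm{Perimeter}\ge1$ is therefore equivalent to
\[
|PA|+|PB|\le 2a(x-1).
\]
This says that $P$ lies in the closed ellipse with foci $A,B$, semi-major axis $a(x-1)$ along the $y$-axis, and semi-minor axis squared $a^2((x-1)^2-1)=a^2x(x-2)$. The description is valid provided $x>2$. Assuming this, the ellipse condition reads
\[
\frac{x^2}{a^2x(x-2)}+\frac{y^2}{a^2(x-1)^2}\le1,
\]
which is equivalent to
\[
x(x-1)^2+y^2(x-2)\le a^2(x-2)(x-1)^2.
\]
Substituting $y^2=m^2(x^2-b^2)$ gives exactly $Q_{m,b,a}(x)\ge0$, which holds by assumption.

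\textbf{The condition $x>2$.} Evaluating at $x=b$ gives $Q_{m,b,a}(b)=(b-1)^2(a^2(b-2)-b)\ge0$, which forces $b>2$ and hence $x\ge b>2$. For $x\le 2$ the inequality $|PA|+|PB|\le 2a(x-1)$ is in any case impossible, by the triangle inequality, so the equivalence is consistent.

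\textbf{Main obstacle.} I expect the hardest point to be the strict inequality in $\Theta^{\sfr}=\inf\{t:\sfr(t)>1\}$, since the construction only yields $\sfr(T_a+\tau_{m,b,a})\ge1$. I would first try to show that, almost surely, $\sfr$ strictly exceeds $1$ immediately after this time. Failing that, I would run the same construction with a slightly enlarged target and use the continuity of $Q_{m,b,a}$ in the parameters together with Brownian scaling, then let the enlargement tend to $0$. The remaining steps — the strong Markov property, invariance under rigid motions, and the elementary triangle geometry — are routine.
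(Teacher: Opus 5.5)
Your geometric reduction is correct and is the paper's argument in cleaner form. The paper shows that $D(v)=\sqrt{x^2+(v-a)^2}+\sqrt{x^2+(v+a)^2}$ is nondecreasing in $v\ge 0$. It then checks that $D(\sqrt{H(x)})=2a(x-1)$ with $H(x)=(x-1)^2\bigl(a^2-\tfrac{x}{x-2}\bigr)$, which is exactly $y^2$ on the boundary of your focal ellipse at abscissa $x$. Your ellipse description turns that one-directional implication into an equivalence. There is, however, a slip in your justification of $x>2$. The inequality $Q_{m,b,a}(b)=(b-1)^2\bigl(a^2(b-2)-b\bigr)\ge 0$ does not force $b>2$, since it holds with equality for $b=1$. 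As you note yourself, exit points with $x\le 2$ would give inradius below $1$, so this case must genuinely be excluded. The correct argument is $Q_{m,b,a}(2)=-2<0$: if $b\le 2$, then $x=2$ lies in $[b,\infty)$ and violates the hypothesis. This is what the paper does.

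The genuine gap is the strict inequality, which you identify but do not resolve; neither proposed route works as stated. Route (a) is not a general property of the hull process. Whenever $\bm W(T)$ lies in the interior of $\rmH(T)$, the hull and hence $\sfr$ stay constant for a positive time after $T$. So you would have to analyse the equality configuration anyway, and it is much simpler to show it has probability zero. Route (b) would need almost sure convergence, as $\rho\downarrow 1$, of $T_{\rho a}+\tau_{m,\rho b,\rho a}$ to $T_a+\tau_{m,b,a}$. The former is built from a different stopping time, segment and rotation, so this is a nontrivial continuity statement about the whole construction, and you have not proved it.

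The missing idea is short and uses your own equivalence. The triangle $ABP$ has inradius exactly $1$ if and only if $P$ lies on the boundary of the ellipse, i.e.\ if and only if $Q_{m,b,a}(x)=0$. Since $Q_{m,b,a}$ is a nonzero polynomial of degree at most $3$ (again $Q_{m,b,a}(2)=-2$), this happens at only finitely many points of $\partial\mathbb H_{m,b}$. The transformed process is a planar Brownian motion started at $A\in\mathbb H_{m,b}$, so it almost surely never hits any prescribed point of $\partial\mathbb H_{m,b}$. Hence almost surely $Q_{m,b,a}>0$ at the exit point, and the triangle has inradius strictly larger than $1$. By monotonicity $\sfr(T_a+\tau_{m,b,a})>1$, and $\Theta^{\sfr}\le T_a+\tau_{m,b,a}$ follows at once. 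This is exactly how the paper concludes.
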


\begin{proof}
At time $T_a$, at least one of the coordinates has range $2a$. Hence, the convex hull of the Brownian path up to time $T_a$ contains a line segment of length at least $2a$, and $\bm{W}(T_a)$ is one endpoint of such a segment. Replacing this segment by a subsegment if necessary, we obtain a segment of length exactly $2a$ contained in $\mathcal{H}(T_a)$ with $\bm{W}(T_a)$ as one endpoint. Let $\mathfrak R$ be an orthogonal transformation, measurable with respect to $\mathcal F_{T_a}$, chosen so that the affine map $z \mapsto \mathfrak R\bigl(z-\bm W(T_a)\bigr)+A$ sends the selected segment of length $2a$, with endpoint $\bm W(T_a)$, onto the vertical segment with endpoints $A=(0,a)$ and $B=(0,-a)$, sending $\bm W(T_a)$ to $A$. Define $\widetilde{\bm W}(t)
\coloneqq
\mathfrak R\bigl(\bm W(T_a+t)-\bm W(T_a)\bigr)+A$, for all $t \ge 0$. Then $\widetilde{\bm W}(0)
=
\mathfrak R(0)+A
=
A$, since $\mathfrak R$ is linear. By the strong Markov property together with rotational invariance, $\widetilde{\bm W}$ is a planar Brownian motion started from $A=(0,a)$, independent of $\mathcal F_{T_a}$. Define the stopping time
\[
    \tau_{m,b,a}
    =
    \inf\{t\ge 0:\widetilde{\bm{W}}(t)\notin \mathbb H_{m,b}\},
\]
and write $ C=\widetilde{\bm{W}}(\tau_{m,b,a})$ for the corresponding exit point. Since rigid motions preserve convex hulls and inradii, the convex hull of the original Brownian path up to time
$T_a+\tau_{m,b,a}$ contains, after the above recentering and rotation, the triangle with vertices $A$, $B$, and $C$, see Figure~\ref{fig:construction}.

It remains to show that every possible such triangle has inradius at least $1$. By symmetry of the hourglass domain with respect to both coordinate axes, it is enough to consider exit points on the upper half of the right branch of $\partial \mathbb H_{m,b}$. Such a point has the form $C=(x,y(x))$ where $y(x)=m\sqrt{x^2-b^2}$ for all $x \ge b$. The area of the triangle $ABC$ is $ax$, and its semiperimeter is
\[
    \frac12\left(
        2a+\sqrt{x^2+(y(x)-a)^2}
        +\sqrt{x^2+(y(x)+a)^2}
    \right).
\]
Therefore, the inradius of the triangle $ABC$ is given by
\begin{equation}\label{eq:inradius}
    r(x)
    =
    \frac{2ax}
    {2a+\sqrt{x^2+(y(x)-a)^2}+\sqrt{x^2+(y(x)+a)^2}} .
\end{equation}
We claim that the $Q$ condition $Q_{m,b,a}(x)\ge 0$ for all $x \ge b$ implies $ r(x)\ge 1$ for all $x \ge b$. We divide the proof of this implication into four steps.\medskip

\noindent\textbf{Step 1:} Here we show that the $Q$ condition forces $b>2$. At $x=b$, it follows from the condition that
\[
Q_{m,b,a}(b)=(b-1)^2(a^2(b-2)-b)\geq 0.
\]
With the possible exception of $b=1$, this immediately rules out $0<b<2$. However, we cannot have $b=1$ or $b=2$ either, since in both of these cases setting $x=2\geq b$ results in $Q_{m,b,a}(2)=-2$, which violates the condition. Therefore, the $Q$ condition implies $b>2$.\medskip

\noindent\textbf{Step 2:} Here we establish an inequality that compares the $y$-coordinate of the upper half of the right hyperbola branch to an intermediate quantity $H(x)$, defined by
\[
H(x)=(x-1)^2\mleft(a^2-\frac{x}{x-2}\mright),
\]
that will facilitate a comparison to the inradius $r(x)$ in a later step. Towards this end, we start with the $Q$ condition 
\[
(x-1)^2\mleft(a^2(x-2)-x\mright)-m^2(x-2)(x^2-b^2)\geq 0,
\]
and then use the hyperbola equation to substitute $y(x)^2$ for the factor of $m^2(x^2-b^2)$ appearing in the second term on the left-hand side. This results in 
\[
(x-1)^2\mleft(a^2(x-2)-x\mright)-(x-2)y(x)^2\geq 0.
\]
From \textbf{Step 1} we know that $x\geq b>2$, hence we can divide both sides by $x-2$ to arrive at
\[
y(x)^2\leq (x-1)^2\mleft(a^2-\frac{x}{x-2}\mright).
\]
Therefore, the $Q$ condition implies $y(x)\leq \sqrt{H(x)}$ for all $x\geq b$.\medskip

\noindent\textbf{Step 3:} Here we convert the bound from \textbf{Step 2} into an upper bound for the sum of the lengths of the sides $AC$ and $BC$ in the triangle $ABC$ depicted in Figure~\ref{fig:construction}. For fixed $x>2$, let $D(v)$ denote the sum of the distances from the points $A$ and $B$ to the point $(x,v)$, namely, 
\[
D(v)=\sqrt{x^2+(v-a)^2}+\sqrt{x^2+(v+a)^2}.
\]
To see that $D(v)$ is a nondecreasing function of $v\geq 0$, it suffices to consider the sign of the derivative
\[
D'(v)=\frac{v-a}{\sqrt{x^2+(v-a)^2}}+\frac{v+a}{\sqrt{x^2+(v+a)^2}}.
\]
It is clear that $D'(v)\geq 0$ when $v\geq a$. When $0\leq v<a$, we can get the same conclusion by rewriting $D'(v)$ as
\[
D'(v)=-\frac{a-v}{\sqrt{x^2+(a-v)^2}}+\frac{a+v}{\sqrt{x^2+(a+v)^2}},
\]
and then applying the increasing function $w\mapsto w/\sqrt{x^2+w^2}$ to both sides of the inequality $a-v\leq a+v$ to obtain
\[
\frac{a-v}{\sqrt{x^2+(a-v)^2}}\leq \frac{a+v}{\sqrt{x^2+(a+v)^2}}.
\]

Since the function $D$ is nondecreasing on the interval $[0,\infty)$, we can apply it to both sides of the inequality from \textbf{Step 2} to yield $0\leq D(y(x))\leq D(\sqrt{H(x)})$. Moreover, the equality $D(\sqrt{H(x)})=2a(x-1)$ for $x>2$ can be verified with a straightforward but tedious calculation that starts by squaring both sides. Therefore, the $Q$ condition implies $D\big(y(x)\big)\leq 2a(x-1)$ for all $x\geq b$.\medskip

\noindent\textbf{Step 4:} Here we finally arrive at the desired inradius inequality. Recalling the expression for $r(x)$ from~\eqref{eq:inradius}, we can transform the inequality 
\[
\sqrt{x^2+\big(y(x)-a\big)^2}+\sqrt{x^2+\big(y(x)+a\big)^2}\leq 2a(x-1)
\]
from \textbf{Step 3} into $r(x)\geq 1$. Therefore, the $Q$ condition implies  $r(x)\geq 1$ for all $x\geq b$.

Moreover, the inequality $r(x)\ge 1$ is strict for the random exit point almost surely. Indeed, equality can occur only when \(Q_{m,b,a}(x)=0\), and since \(Q_{m,b,a}\) is a nonzero polynomial, this corresponds to only finitely many points on \(\partial\mathbb H_{m,b}\), each of which is hit by planar Brownian motion with probability zero.

Hence, at time \(T_a+\tau_{m,b,a}\), the convex hull of the Brownian path contains a triangle whose inradius is strictly larger than \(1\) almost surely. Since the inradius of a convex set is monotone under inclusion, the convex hull itself has inradius strictly larger than \(1\) at that time almost surely. Consequently,
\[
    \Theta^{\mathsf r}
    \le
    T_a+\tau_{m,b,a}
    =
    \min\{\Theta_1(2a),\Theta_2(2a)\}
    +
    \tau_{m,b,a}
    \qquad \text{a.s.} \qedhere
\]
\end{proof}

Define now the first-moment objective function
\begin{equation}\label{eq:defn_object_funct_mean_inradi}
    \Psi_1(m,b,a) = 4M_\Theta(1) a^2+\frac{m^2 b^2+a^2}{m^2-1},
\end{equation}
over the feasible region $\mathfrak{F}(1)$, where $M_\Theta(1)\coloneqq \E[\min\{\Theta_1(1),\Theta_2(1)\}]\le 0.346555
$ (see~\eqref{eq:inv_range_first_mom} or~\cite[Lemma 1.7]{CPS}) and 
\begin{equation}\label{eq:feasible_region_F(r)}
    \mathfrak{F}(p)\coloneqq \{(m,b,a) \in \R^3: m>\cot(\pi/(4p)), \, b>0,\, a >0, \, Q_{m,b,a}(x) \ge 0 \text{ for all } x \ge b \}, \quad \text{ for }p >1/2.
\end{equation}

\begin{figure}[h!]
\centering
\begin{tikzpicture}[scale=1.05]
  \def\a{2.6}
  \def\m{2.0}
  \def\b{1.8}
  \def\Ymax{5.0} 
  \draw[->,thick] (-0.2,0) -- (9.0,0) node[below right] {$x$};
  \draw[->,thick] (0,-5.2) -- (0,5.2) node[left] {$y$};
  \pgfmathsetmacro{\xcut}{\Ymax/\m}
  \draw[dashed,gray,thick] (-0.8,{-\m*0.8}) -- (\xcut,\Ymax) node[above] {$y=mx$};
  \draw[dashed,gray,thick] (-0.8,{\m*0.8}) -- (\xcut,{-\Ymax}) node[below] {$y=-mx$};
  \draw[ultra thick,blue,domain=-5:5,samples=600,variable=\yy]
    plot({sqrt(\a*\a + (\yy*\yy)/(\m*\m))},{\yy});
  \draw[ultra thick,blue,domain=-5:5,samples=600,variable=\yy]
    plot({-sqrt(\a*\a + (\yy*\yy)/(\m*\m))},{\yy});
  \filldraw[red] (0,\b) circle (1.8pt) node[above left, yshift=-5pt] {$A=(0,a)$};
  \filldraw[red] (0,-\b) circle (1.8pt) node[below left, yshift=5pt] {$B=(0,-a)$};
  \draw[red,very thick] (0,-\b) -- (0,\b);
  \def\yC{2.6}
  \pgfmathsetmacro{\xC}{sqrt(\a*\a + (\yC*\yC)/(\m*\m))}
  \filldraw[black] (\xC,\yC) circle (1.8pt) node[right] {$C$};
  \draw[very thick] (0,\b) -- (\xC,\yC) -- (0,-\b) -- cycle;
  \filldraw[black] (\a,0) circle (1.5pt) node[below right] {$(b,0)$};
  \foreach \Y in {-4,-2,0,2,4} \draw (-0.1,\Y) -- (0.1,\Y);
  \foreach \X in {0,2,4,6,8} \draw (\X,-0.1) -- (\X,0.1);
\end{tikzpicture}
\caption{The hourglass domain $\mathbb{H}_{m,b}$ with the line segment $AB$ and the triangle $ABC$ for a generic exit point $C$ on the right branch.}
\label{fig:construction}
\end{figure}

\begin{lemma}\label{lem:bound_first_moment_Inverse_range}
For $\Psi_1$ as in~\eqref{eq:defn_object_funct_mean_inradi} and $\mathfrak{F}(1)$ as in~\eqref{eq:feasible_region_F(r)}, we have
\begin{equation*}
    \E[\Theta^\sfr] \le \inf_{(m,b,a)\in \mathfrak{F}(1)} \Psi_1(m,b,a)\le 18.7460.
\end{equation*} 
\end{lemma}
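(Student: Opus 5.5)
Fix $(m,b,a)\in\mathfrak F(1)$. Since $p=1$ gives $\cot(\pi/4)=1$, membership in $\mathfrak F(1)$ means exactly that $m>1$, $b>0$, $a>0$ and $Q_{m,b,a}\ge 0$ on $[b,\infty)$. These are precisely the hypotheses of Proposition~\ref{prop:two_stage_inradius_upper_bound}. The plan is to take expectations in its almost sure inequality
\[
\Theta^{\sfr}\le \min\{\Theta_1(2a),\Theta_2(2a)\}+\tau_{m,b,a},
\]
evaluate the two terms separately, and then take the infimum over the feasible region.

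\textbf{First term.} We use the Brownian scaling $\Theta_i(\lambda y)\stackrel{d}{=}\lambda^2\Theta_i(y)$ jointly for the pair. The two coordinate processes are scaled by the same time change, so
\[
(\Theta_1(2a),\Theta_2(2a))\stackrel{d}{=}4a^2(\Theta_1(1),\Theta_2(1)).
\]
Hence $\E[\min\{\Theta_1(2a),\Theta_2(2a)\}]=4a^2M_\Theta(1)$.

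\textbf{Second term.} By the proof of Proposition~\ref{prop:two_stage_inradius_upper_bound}, $\widetilde{\bm W}$ is a planar Brownian motion started at $A=(0,a)$ and independent of $\mathcal F_{T_a}$. Therefore $\tau_{m,b,a}$ has the law of $\tau_{m,b}$ under $\p_{(0,a)}$. The point $(0,a)$ lies in $\mathbb H_{m,b}$, since $0-a^2/m^2<b^2$. Lemma~\ref{lem:torsion} applies because $m>1$, and gives
\[
\E_{(0,a)}[\tau_{m,b}]=\frac{m^2b^2+a^2}{m^2-1}.
\]
Adding the two terms yields $\E[\Theta^{\sfr}]\le\Psi_1(m,b,a)$, and taking the infimum over $\mathfrak F(1)$ gives the first inequality.

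\textbf{Numerical bound.} For the inequality $\inf\Psi_1\le 18.7460$ it suffices to exhibit one explicit feasible triple $(m,b,a)$ with $\Psi_1(m,b,a)\le 18.7460$. The value $M_\Theta(1)\le 0.346555$ is used as an upper bound inside $\Psi_1$. I would locate the triple by numerical minimization. The intuition for where it lies:
\begin{itemize}
\item Step~1 of the proposition forces $b>2$.
\item $a$ must be large enough that $a^2(x-2)-x>0$ near $x=b$.
\item $m$ must balance the factor $1/(m^2-1)$ against the $m^2(x-2)(x^2-b^2)$ term in $Q_{m,b,a}$. The leading $x^3$ coefficient of $Q_{m,b,a}$ is $a^2-1-m^2$, so $a^2\ge m^2+1$ is needed for nonnegativity at infinity.
\end{itemize}

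\textbf{Main obstacle: certifying feasibility.} The hard step is a rigorous proof that $Q_{m,b,a}(x)\ge0$ for all $x\ge b$ at the chosen parameters. Here $Q_{m,b,a}$ is a cubic in $x$, and at the optimum the constraint is likely nearly tight, with a double root or a touching point. To handle this, I would pick slightly rounded rational parameters with a small margin. I would then write $Q_{m,b,a}(b+s)$ as a cubic in $s\ge0$. Two routes could finish the check:
\begin{itemize}
\item show all its coefficients are nonnegative;
\item if that fails, compute its discriminant, or its minimum over $s\ge0$ via the critical points of the quadratic derivative, and show the minimum is positive.
\end{itemize}
After this, evaluating $\Psi_1$ at the chosen triple is a direct computation confirming the value is at most $18.7460$.
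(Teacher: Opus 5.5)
Your proposal follows the paper's proof. You take expectations in Proposition~\ref{prop:two_stage_inradius_upper_bound}, use scaling to get $4a^2M_\Theta(1)$ and Lemma~\ref{lem:torsion} at $(0,a)$ to get $(m^2b^2+a^2)/(m^2-1)$, and exhibit one feasible triple. The paper takes $m=1.908007$, $a=2.484217$, $b=2.386748$ and certifies $Q_{m,b,a}\ge 0$ on $[b,\infty)$ by your second route: it checks the values at $x=b$, at the two critical points in $(b,\infty)$, and as $x\to\infty$. Your first route (nonnegative coefficients of $Q_{m,b,a}(b+s)$) cannot work at that point, because $Q_{m,b,a}$ is nearly zero both at $x=b$ and at an interior local minimum.
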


\begin{proof}
We note by Proposition~\ref{prop:two_stage_inradius_upper_bound} and Lemma~\ref{lem:torsion}, that
$\E[\Theta^\sfr]\le 4a^2\E[\min\{\Theta_1,\Theta_2\}]+\E_{(0,a)}[\tau_{m,b}]=\Psi_1(m,b,a)$ for all $(m,b,a) \in \mathfrak{F}(1)$. Hence, it follows directly, that $\E[\Theta^\sfr] \le \inf_{(m,b,a) \in \mathfrak{F}(1)} \Psi_1(m,b,a)$. Taking $m = 1.908007$, $a = 2.484217$ and $b = 2.386748$ gives the desired bound. It is straightforward to check that this point is indeed in the feasibility region $\mathfrak{F}(1)$, by checking the value of $Q_{m, b, a}(x)$ at $x = b$, the two critical points in $(b, \infty)$, and as $x \to \infty$.
\end{proof}

\begin{proof}[Proof of Theorem~\ref{thm:up_low_exp_inv_inr}]
The proof of the lower bound follows directly from Lemma~\ref{lem:lb_exp_inv_inr}, and the proof of the upper bound follows directly from Lemma~\ref{lem:bound_first_moment_Inverse_range}.
\end{proof}

\section{Variance bounds for inradius and related inverse process}\label{sec:var_bounds}

In this section we develop bounds for the variance of $\sfr$ and $\Theta^{\sfr}$. As we already stressed, lower bounds should be treated more as explicit rigorous positive bounds, not as sharp estimates. The main idea for both lower bounds is to use Chebyshev's inequality. Recall that for a random variable $X$, and for any $\varepsilon \ge 0$ we have
\begin{equation*}
    \Var(X) = \E[(X - \E X)^2] \ge \varepsilon^2 \p(|X - \E X| \ge \varepsilon).
\end{equation*}
If $\E X \ge 0$, taking $\varepsilon = \alpha \E X$, for $\alpha > 0$, we obtain
\begin{equation}\label{eq:cheb_with_alpha}
    \Var(X) \ge \alpha^2 (\E X)^2 \mleft( \p(X \le (1 - \alpha)\E X) + \p(X \ge (1 + \alpha)\E X) \mright),
\end{equation}
and our lower bounds use whichever of the latter two probabilities is most convenient. For the upper bound on $\Var(\sfr)$, we further develop the approach used in the proof of Theorem~\ref{thm:ub_exp_inr}, and for the upper bound on $\Var(\Theta^{\sfr})$, we use the two-stage construction from Lemma~\ref{lem:bound_first_moment_Inverse_range}.

\subsection{Two--sided bounds on $\Var(\sfr)$}
In this subsection we prove Theorem~\ref{thm:up_low_var_inr} which claims that $2.7 \cdot 10^{-6} \le \Var(\sfr) \le 0.315520$.
\begin{remark}
    Monte Carlo estimation of the value $\Var(\sfr)$ is $8.4 \cdot 10^{-3}$.
\end{remark}

Denote by $F_R(x)$ the cumulative distribution function of the one-dimensional Brownian range. The explicit form of $F_R$ is known from~\cite{Jovalekic}, namely
\begin{equation}\label{eq:cdf-range}
    F_R(x) = 8\sum_{m=1}^\infty \mleft(\frac{1}{x^2}+\frac{1}{(2m-1)^2\pi^2}\mright) \exp  \mleft(-\frac{(2m-1)^2\pi^2}{2x^2}\mright), \quad \text{for all } x > 0.
\end{equation}

We first prove the following lemma.
\begin{lemma}\label{lem:lb_var_inr}
    For every $\alpha \in (0, 1)$ it holds that
    \begin{equation*}
        \Var(\sfr) \ge \alpha^2 \big(\E[\sfr]\big)^2 \mleft[ 1 - \big( 1 - F_R(2(1 - \alpha)\E[\sfr]) \big)^2 \mright].
    \end{equation*}
    Furthermore, taking only the first term of the series for $F_R(x)$, setting $\alpha = 0.1$, and using the lower bound $\E[\sfr]\ge 0.393$ from~\cite{CPS}, we obtain
    \begin{equation*}
        \Var(\sfr) \ge 2.7 \cdot 10^{-6}.
    \end{equation*}
\end{lemma}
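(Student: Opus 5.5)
We will apply the Chebyshev-type inequality \eqref{eq:cheb_with_alpha} to $X=\sfr$, keeping only the lower-tail probability:
\[
\Var(\sfr)\ge \alpha^2(\E[\sfr])^2\,\p\bigl(\sfr\le(1-\alpha)\E[\sfr]\bigr).
\]
To bound this probability from below we use Lemma~\ref{lem:r_bound_range}. Since $\sfr\le \min\{R_1,R_2\}/2$ a.s., we have the event inclusion
\[
\bigl\{\min\{R_1,R_2\}\le 2(1-\alpha)\E[\sfr]\bigr\}\subseteq\bigl\{\sfr\le (1-\alpha)\E[\sfr]\bigr\}.
\]
By independence and equality in law of $R_1,R_2$, we get $\p(\min\{R_1,R_2\}\le c)=1-\p(R_1>c)^2=1-(1-F_R(c))^2$. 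Here we use that $R$ has a continuous law, so $\p(R>c)=1-F_R(c)$. Taking $c=2(1-\alpha)\E[\sfr]$ gives the first displayed inequality of the lemma for every $\alpha\in(0,1)$.

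For the numerical bound, the right-hand side is increasing in $\E[\sfr]$ as long as we keep $\alpha$ fixed. Indeed, the factor $(\E[\sfr])^2$ is increasing, and $F_R$ is a CDF, hence nondecreasing. So we may replace $\E[\sfr]$ by the lower bound $0.393$ from~\cite{CPS} in both places. Next, every term of the series \eqref{eq:cdf-range} is nonnegative, so truncating to $m=1$ gives a lower bound
\[
F_R(x)\ge 8\Bigl(\tfrac1{x^2}+\tfrac1{\pi^2}\Bigr)e^{-\pi^2/(2x^2)}.
\]
Since $1-(1-F)^2$ is increasing in $F\in[0,1]$, this lower bound propagates.

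With $\alpha=0.1$ we evaluate at $x=2\cdot0.9\cdot0.393=0.7074$. Then $x^2\approx0.5004$, and the exponent is $\pi^2/(2x^2)\approx 9.86$, giving $e^{-9.86}\approx 5.2\cdot10^{-5}$. The prefactor is $8(1.998+0.101)\approx16.8$, so $F_R(x)\gtrsim 8.8\cdot10^{-4}$. Consequently $1-(1-F)^2\approx 2F\approx1.75\cdot10^{-3}$. Multiplying by $\alpha^2(0.393)^2\approx 1.54\cdot10^{-3}$ gives about $2.7\cdot10^{-6}$.

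The main obstacle is only the careful numerics, namely rounding in the right direction at each step. The monotonicity arguments justifying the substitutions are routine.
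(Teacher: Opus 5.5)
Your proof is correct and follows the paper's argument step for step: the lower-tail Chebyshev bound, the inclusion $\{\min\{R_1,R_2\}\le 2x\}\subseteq\{\sfr\le x\}$ from Lemma~\ref{lem:r_bound_range}, independence of the ranges, and monotonicity in $F_R$ to justify truncating the series at $m=1$ and substituting $\E[\sfr]\ge 0.393$ (you state this monotonicity step more explicitly than the paper does). One caution on the numerics: the margin is only about $0.2\%$, since the truncated bound is $\approx 2.7046\cdot 10^{-6}$, so your rounded figures do not by themselves certify $2.7\cdot10^{-6}$ (for instance $F_R\gtrsim 8.8\cdot 10^{-4}$ rounds the wrong way for a lower bound, and $1.54\times 1.75\approx 2.695$ falls below $2.7$), whereas carrying a few more digits ($\alpha^2(0.393)^2=1.54449\cdot10^{-3}$, $F_R\ge 8.759\cdot10^{-4}$) does.
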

\begin{proof}
    From~\eqref{eq:cheb_with_alpha} we clearly have
    \begin{equation}\label{eq:var_r-lb-step1}
        \Var(\sfr) \ge \alpha^2 (\E[\sfr])^2 \p(\sfr \le (1 - \alpha) \E[\sfr]).
    \end{equation}
    Recall that $R_1$ and $R_2$ are the coordinate-wise ranges, and, by Lemma~\ref{lem:r_bound_range}, that the inradius satisfies $\sfr \le \min\{R_1,R_2\}/2$ a.s. Consequently, for any $x>0$,
    \begin{equation*}
        \p(\sfr \le x) \ge \p(\min\{R_1, R_2\} \le 2x) = 1 - \p(R_1 > 2x, R_2 > 2x).
    \end{equation*}
    Since $R_1$ and $R_2$ are independent and identically distributed,
    \begin{equation}\label{eq:var_r-lb-step2}
        \p(\sfr \le x) \ge 1 - \mleft( 1 - F_R(2x) \mright)^2,
    \end{equation}
    where $F_R$ denotes the cumulative distribution function from~\eqref{eq:cdf-range}. Setting $x=(1-\alpha)\E[\sfr]$ in~\eqref{eq:var_r-lb-step2}, and plugging this into~\eqref{eq:var_r-lb-step1} yields
    \begin{equation*}
        \Var(\sfr) \ge \alpha^2 \big(\E[\sfr]\big)^2 \mleft[ 1 - \big( 1 - F_R(2(1 - \alpha)\E[\sfr]) \big)^2 \mright].
    \end{equation*}
    Using the lower bound $\E[\sfr]\ge 0.393$ from~\cite{CPS}, we obtain
    \begin{equation}\label{eq:var_r-lb-final}
        \Var(\sfr) \ge \alpha^2 (0.393)^2 \mleft[ 1 - \mleft( 1 - F_R(2(1 - \alpha)0.393) \mright)^2 \mright].
    \end{equation}
Notice that all terms of the series in~\eqref{eq:cdf-range} are positive. Truncating the series after finitely many terms therefore yields a lower bound on $F_R$, which is sufficient for~\eqref{eq:var_r-lb-final}, since the right--hand side is monotone increasing in $F_R$. Optimizing the right-hand side of~\eqref{eq:var_r-lb-final} over $\alpha\in(0,1)$ and retaining only the first term of the series for $F_R$ (the second term already being smaller than $10^{-30}$ at the optimizer $\alpha \approx 0.1$) gives the explicit bound
\begin{equation}\label{eq:var_r-lb}
    \Var(\sfr) \ge 0.1^2 \cdot (0.393)^2 \mleft[ 1 - \mleft(  1 - 8 \mleft( \frac{1}{(2\cdot 0.9 \cdot 0.393)^2} + \frac{1}{\pi^2} \mright) \exp{\mleft( -\frac{\pi^2}{2(2\cdot 0.9 \cdot 0.393)^2}\mright)}\mright)^2\mright] \ge 2.7 \cdot 10^{-6}. 
\end{equation}
\end{proof}

\begin{proposition}\label{prop:min-range-square}
For $m \ge 1$, let  
\begin{equation}
    A_m \coloneqq 
\frac{e^{-(2m-1)\pi}}{(1+e^{-(2m-1)\pi})^2}
+\frac{e^{-(2m-1)\pi}}{\pi(2m-1)(1+e^{-(2m-1)\pi})}
+\frac{\log(1+e^{-(2m-1)\pi})}{\pi^2(2m-1)^2}.
\end{equation} Then, it follows that
\begin{equation}\label{eq:bound_r_squared}
    \E[\sfr^2] \le \frac{1}{4}\E  \mleft[\min\{R_1,R_2\}^2\mright]
=
\frac{1}{2}\int_0^\infty x  \p(R_1\ge x)^2 \D x= 2\log 2
-16\sum_{m=1}^\infty A_m.
\end{equation} Moreover, since $A_m >0$ for all $m \ge 1$, it holds that 
\begin{equation}\label{eq:approx_min_suqared}
    \E  \mleft[\min\{R_1, R_2\}^2\mright]
\;\le\;
8\log 2-64\sum_{m=1}^5 A_m < 1.87988,
\end{equation} yielding that 
\begin{equation}\label{eq:var_bound}
    \Var(\sfr) < 0.315520.
\end{equation}
\end{proposition}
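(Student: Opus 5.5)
The inequality $\E[\sfr^2]\le \frac14\E[\min\{R_1,R_2\}^2]$ follows by squaring the almost sure bound $0\le\sfr\le\min\{R_1,R_2\}/2$ from Lemma~\ref{lem:r_bound_range} and taking expectations. For the integral representation I would argue as in the proof of Proposition~\ref{prop:first_bound_red}, using $\E[Y^2]=\int_0^\infty 2x\,\p(Y\ge x)\D x$ for $Y\ge0$. Independence of $R_1,R_2$ gives $\E[\min\{R_1,R_2\}^2]=2\int_0^\infty x\,\p(R\ge x)^2\D x$, and dividing by $4$ yields the middle expression in~\eqref{eq:bound_r_squared}. The real work is the closed-form evaluation
\[
\tfrac12\int_0^\infty x\,\p(R\ge x)^2\D x = 2\log2-16\sum_{m\ge1}A_m .
\]

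The exponentials $e^{-(2m-1)\pi}$ in $A_m$ indicate that the two standard representations of the range law should be mixed. I would write one factor $\p(R\ge x)$ through Feller's small-time series $8\sum_n(-1)^{n+1}n\,\overline\Phi(nx)$. The other factor I would write as $1-F_R(x)$, with $F_R$ the theta-transformed series~\eqref{eq:cdf-range}. This gives
\[
\int_0^\infty x\,\p(R\ge x)^2\D x=\int_0^\infty x\,\p(R\ge x)\D x-\int_0^\infty x\,\p(R\ge x)F_R(x)\D x .
\]
The first integral equals $\E[R^2]/2$, which is explicit in terms of $\log 2$ by Feller's moment formula~\cite{Feller1951}; this supplies the $\log2$ constant. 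In the cross term I would pair the $n$-th Feller term with the $m$-th term of~\eqref{eq:cdf-range}. An integration by parts turns $\overline\Phi(nx)$ into $\phi(nx)$, and the resulting integrals are of the form $\int_0^\infty x^{k}e^{-n^2x^2/2-(2m-1)^2\pi^2/(2x^2)}\D x$ with $k\in\{-3,-1,1\}$. These are elementary Bessel-$K_{1/2}$ type integrals, equal to explicit rational functions of $n$ and $2m-1$ times $e^{-n(2m-1)\pi}$. Summing over $n$ with the alternating sign, each $m$ contributes geometric-type series in $q=e^{-(2m-1)\pi}$:
\[
\sum_n(-1)^{n+1}n q^n=\frac{q}{(1+q)^2},\qquad \sum_n(-1)^{n+1}q^n=\frac{q}{1+q},\qquad \sum_n(-1)^{n+1}q^n/n=\log(1+q).
\]
These are exactly the three terms of $A_m$ with their $\pi(2m-1)$ prefactors. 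I would organise the bookkeeping so that the boundary terms from the integration by parts combine with $\E[R^2]/2$ to produce $2\log2$; if they do not, that signals an arithmetic slip to recheck.

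The main obstacle is justifying the interchange of $\int\D x$ with the double sum over $(n,m)$, because Feller's series is only conditionally convergent near $x=0$. I would first try to avoid the issue: for fixed $x$, the series~\eqref{eq:cdf-range} has positive terms, so Tonelli applies to that sum. For the Feller factor I would reuse the Abel regularisation $r^{n^2}$ and the uniform domination $|S_r(x)|\le C\1_{(0,1)}(x)+8\overline\Phi(x)\1_{[1,\infty)}(x)$ from Step~3 of the proof of Proposition~\ref{prop:square-tail-integral}. Multiplied by $x\,F_R(x)$, which is bounded and decays like $e^{-\pi^2/(2x^2)}$ at $0$, this gives an integrable majorant. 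After the $x$-integration, the sums over $n$ converge absolutely since $|q|<1$, so letting $r\to1^-$ is harmless.

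For~\eqref{eq:approx_min_suqared}, positivity of each $A_m$ is immediate since all three terms are positive. Dropping $m\ge6$ therefore only increases the right-hand side, and multiplying by $4$ gives $8\log2-64\sum_{m=1}^5A_m$. A numerical evaluation gives the bound $1.87988$, and the neglected tail is smaller than $e^{-11\pi}$. Finally, $\Var(\sfr)=\E[\sfr^2]-\E[\sfr]^2\le \frac14\cdot1.87988-(0.393)^2$, using the lower bound $\E[\sfr]\ge0.393$ from~\cite{CPS}, which gives~\eqref{eq:var_bound}.
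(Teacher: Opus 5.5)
Your reduction to $\frac12\int_0^\infty x\,\p(R\ge x)^2\D x$, the split $\int x(1-F_R)^2=\int x(1-F_R)-\int x(1-F_R)F_R$, the positivity/truncation argument and the final variance step are fine. The gap is in the evaluation of the cross term $\int_0^\infty x\,\p(R\ge x)F_R(x)\D x$, which does not work the way you describe.

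Pair the $n$-th Feller term $8(-1)^{n+1}n\overline\Phi(nx)$ with the $m$-th term of~\eqref{eq:cdf-range}, and integrate by parts so the derivative falls on $\overline\Phi(nx)$. You must then integrate $(x^{-1}+x/c^2)e^{-c^2/(2x^2)}$, with $c=(2m-1)\pi$. Its antiderivative is $\frac{x^2}{2c^2}e^{-c^2/(2x^2)}+\frac14E_1\bigl(\frac{c^2}{2x^2}\bigr)$, where $E_1(z)=\int_1^\infty e^{-zu}u^{-1}\D u$. The result is
\[
\int_0^\infty x\,\overline\Phi(nx)\Bigl(\frac1{x^2}+\frac1{c^2}\Bigr)e^{-c^2/(2x^2)}\D x=\frac{(1+nc)e^{-nc}}{4c^2n^2}+\frac{E_1(nc)}{4},
\]
which is not an elementary function of $n,c$ times $e^{-nc}$. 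Relatedly, the odd powers $x^{-3},x^{-1},x$ against $e^{-n^2x^2/2-c^2/(2x^2)}$ give the integer-order Bessel functions $K_1,K_0$, not $K_{1/2}$; only even powers are elementary.

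Summing over $n$, your cross term comes out as
\[
16\sum_{m\ge1}\Bigl[\frac{\log(1+q_m)}{c_m^2}+\frac{q_m}{c_m(1+q_m)}+\sum_{n\ge1}(-1)^{n+1}nE_1(nc_m)\Bigr],\qquad c_m=(2m-1)\pi,\ q_m=e^{-c_m}.
\]
What is needed is $32\sum_mA_m-2\log2$. The two agree numerically (both $\approx0.4464$; Tonelli justifies your termwise interchange). However:
\begin{itemize}
\item the term $q_m/(1+q_m)^2$ of $A_m$ never appears;
\item all termwise boundary terms vanish, so nothing produces the $-2\log2$;
\item converting the $E_1$ series into geometric sums plus $\log2$ is a nontrivial identity that your plan does not contain.
\end{itemize}
So this is not an arithmetic slip to be rechecked.

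The missing idea is to integrate by parts on the whole product before expanding into series. Equivalently, use the density $2f_R(1-F_R)$ of $\min\{R_1,R_2\}$, as the paper does. Since $\frac{\mathrm{d}}{\mathrm{d}x}\bigl[(1-F_R)F_R\bigr]=f_R(1-2F_R)$, you get
\[
\int_0^\infty x\,\bigl(1-F_R(x)\bigr)F_R(x)\D x=\int_0^\infty x^2f_R(x)F_R(x)\D x-\frac12\E[R^2].
\]
This supplies the second $\frac12\E[R^2]$ your $\log2$ bookkeeping was looking for, so that $\frac12\int_0^\infty x\,\p(R\ge x)^2\D x=\frac12\E[R^2]-\frac12\int_0^\infty x^2f_RF_R\D x$.

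Now expand $f_R=-\frac{\mathrm{d}}{\mathrm{d}x}\p(R\ge x)=8\sum_n(-1)^{n-1}n^2\phi(nx)$ (with $\phi$ the standard normal density) and $F_R$ by~\eqref{eq:cdf-range}. Then:
\begin{itemize}
\item only $\int_0^\infty e^{-n^2x^2/2-c^2/(2x^2)}\D x$ and $\int_0^\infty x^2e^{-n^2x^2/2-c^2/(2x^2)}\D x$ occur, and both are elementary multiples of $e^{-nc}$;
\item after integration the double series converges absolutely, with terms of order $ne^{-nc_m}$, so Fubini applies directly and no Abel regularisation is needed;
\item the three geometric-type sums give $\int_0^\infty x^2f_RF_R\D x=32\sum_mA_m$, hence the value $2\log2-16\sum_mA_m$.
\end{itemize}

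One numerical caveat: $\frac14\cdot1.87988-(0.393)^2=0.315521$, which does not yield $\Var(\sfr)<0.315520$. You need the truncated value to more digits, $8\log2-64\sum_{m\le5}A_m=1.8798749\ldots$.
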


\begin{proof}
Similarly to the proof of Proposition~\ref{prop:first_bound_red}, we know by Lemma~\ref{lem:r_bound_range} and a change of variables, that
\begin{equation}\label{eq:equal_first_proof}
    \E[\sfr^2] \le \frac{1}{4}\E  \mleft[\min\{R_1, R_2\}^2\mright]
=
\frac{1}{4}\int_0^\infty \p(\min\{R_1, R_2\}^2\ge u)\D u = \frac{1}{4}\int_0^\infty 2x \p(R \ge x)^2 \D x,
\end{equation}
which proves the first identity in~\eqref{eq:bound_r_squared}. We now evaluate explicitly the quantity in~\eqref{eq:equal_first_proof}. Let $F_R$ and $f_R$ denote the cumulative distribution
function and probability density function of $R$. Recall that the explicit expression for $F_R$ is already given in~\eqref{eq:cdf-range}. The following classical formula holds for $x>0$ by~\cite[Eq.~(3.6)]{Feller1951}:
\begin{equation}\label{eq:density-range}
    f_R(x) = \sqrt{\frac{8}{\pi}} \sum_{n=1}^\infty (-1)^{n-1}n^2 e^{-n^2x^2/2}.
\end{equation}

Since the variable $\min\{R_1, R_2\}$ is nonnegative, it follows that $\E[\min\{R_1, R_2\}^2]=\int_0^\infty 2x  \p(\min\{R_1, R_2\}\ge x)\D x$. Because $R_1,R_2$ are independent and identically distributed,
\[
\p(\min\{R_1, R_2\}\ge x)=\p(R_1\ge x,R_2\ge x)=\p(R\ge x)^2=(1-F_R(x))^2.
\]
Hence, $\E[\min\{R_1, R_2\}^2]
=
\int_0^\infty 2x  (1-F_R(x))^2\D x$. On the other hand, the distribution function of $\min\{R_1, R_2\}$ is $F_{\min\{R_1, R_2\}}(x)=1-(1-F_R(x))^2$, so, since $R$ has density $f_R$, the random variable $\min\{R_1, R_2\}$ has density
\[
f_{\min\{R_1, R_2\}}(x)=F_{\min\{R_1, R_2\}}'(x)=2f_R(x)(1-F_R(x)).
\]
Therefore, 
\[
\E[\min\{R_1, R_2\}^2]
=
\int_0^\infty x^2 f_{\min\{R_1, R_2\}}(x)\D x
=
2\int_0^\infty x^2 f_R(x)(1-F_R(x))\D x,
\] and hence, since $\E[R^2]=4 \log 2$ by~\cite[Eq.~(1.4)]{Feller1951}, it follows that
\[
\E[\min\{R_1, R_2\}^2]
=
2\E[R^2]
-
2\int_0^\infty x^2 f_R(x)F_R(x)\D x=8 \log(2)
-
2\int_0^\infty x^2 f_R(x)F_R(x)\D x.
\]

Next, write $q_m \coloneqq 2m-1$ and $\beta_m \coloneqq q_m^2 \pi^2 /2$. Substituting~\eqref{eq:density-range} and~\eqref{eq:cdf-range} into $I \coloneqq \int_0^\infty x^2 f_R(x)F_R(x)\D x$, and using Fubini's theorem, we obtain
\begin{equation}\label{eq:I_reduction_double_sum}
I
=
8\sqrt{\frac{8}{\pi}}
\sum_{m,n \ge 1} (-1)^{n-1}n^2
\int_0^\infty
\mleft(1+\frac{x^2}{q_m^2\pi^2}\mright)
\exp  \mleft(-\frac{n^2x^2}{2}-\frac{q_m^2\pi^2}{2x^2}\mright)\D x.
\end{equation}
We now use the ensuing standard identities, valid for $a,b>0$, which follow by~\cite[Eq.~3.471(9) \&~8.469(3)]{MR2360010}, and by differentiating the first identity below w.r.t. $a$: 
\begin{equation}\label{eq:bessel2}
\int_0^\infty e^{-ax^2-b/x^2}\D x
=
\frac{\sqrt{\pi}}{2\sqrt a}  e^{-2\sqrt{ab}}, \quad \text{and}\quad \int_0^\infty x^2 e^{-ax^2-b/x^2}\D x
=
\frac{\sqrt{\pi}}{4a^{3/2}}(1+2\sqrt{ab})e^{-2\sqrt{ab}}.
\end{equation}Applying~\eqref{eq:bessel2} with $a= n^2/2$, and $b=q_m^2\pi^2/2$, we have $2\sqrt{ab}=nq_m\pi$, and hence
\begin{equation}\label{eq:clas_equli_normal}
\begin{aligned}
\int_0^\infty e^{-n^2x^2/2-q_m^2\pi^2/(2x^2)}\D x
&=
\frac{\sqrt{2\pi}}{2n}e^{-nq_m\pi}, \quad \text{ and }
\\
\int_0^\infty x^2 e^{-n^2x^2/2-q_m^2\pi^2/(2x^2)}\D x
&=
\frac{\sqrt{2\pi}}{2n^3}(1+nq_m\pi)e^{-nq_m\pi}.
\end{aligned}
\end{equation}
Substituting~\eqref{eq:clas_equli_normal} into~\eqref{eq:I_reduction_double_sum} yields
\[
I
=
16\sum_{m=1}^\infty\sum_{n=1}^\infty (-1)^{n-1}
e^{-nq_m\pi}
\mleft(
n+\frac{1}{\pi q_m}+\frac{1}{\pi^2 q_m^2 n}
\mright).
\]

Let now $t_m\coloneqq e^{-q_m\pi}=e^{-(2m-1)\pi}$. Since $q_m=2m-1\ge 1$, we have $0<t_m=e^{-(2m-1)\pi}<1$. Hence, all power series below converge absolutely. First, by the geometric series formula, we have that $\sum_{n=0}^\infty (-t_m)^n=(1+t_m)^{-1}$. Subtracting the $n=0$ term gives
\begin{equation}\label{eq:geom_sum_1}
    \sum_{n=1}^\infty (-1)^{n-1} t_m^n
=
-\sum_{n=1}^\infty (-t_m)^n
=
1-\frac{1}{1+t_m}
=
\frac{t_m}{1+t_m}.
\end{equation} Next, since the power series $\sum_{n=0}^\infty (-t)^n=1/(1+t)$ has convergence radius $1$, it may be differentiated termwise for $t \in (-1,1)$. Thus, for $|t|<1$, $\sum_{n=1}^\infty n(-1)^n t^{n-1}
=
-(1+t)^{-2}$. Evaluating at $t=t_m$ and multiplying by $-t_m$, we obtain
\begin{equation}\label{eq:geom_sum_2}
    \sum_{n=1}^\infty (-1)^{n-1}n t_m^n
=
\frac{t_m}{(1+t_m)^2}.
\end{equation} Finally, again because the geometric series has radius of convergence $1$, it may be integrated termwise on compact subintervals of $(-1,1)$. So, since $\sum_{n=0}^\infty (-u)^n=(1+u)^{-1}$ for all $|u|<1$, we may integrate from $0$ to $t_m$, to get $\sum_{n=0}^\infty (-1)^n\int_0^{t_m}u^n\D u
=
\int_0^{t_m}(1+u)^{-1} \D u$. That is, $\sum_{n=0}^\infty (-1)^nt_m^{n+1}(n+1)^{-1}
=
\log(1+t_m)$.
Reindexing with $k=n+1$ yields
\begin{equation}\label{eq:geom_sum_3}
    \sum_{k=1}^\infty (-1)^{k-1}\frac{t_m^k}{k}
=
\log(1+t_m).
\end{equation}

Altogether, by~\eqref{eq:geom_sum_1},~\eqref{eq:geom_sum_2} and~\eqref{eq:geom_sum_3}, it holds that
\begin{align*}
\sum_{n=1}^\infty (-1)^{n-1}n t_m^n
&=
\frac{t_m}{(1+t_m)^2}, \quad \sum_{n=1}^\infty (-1)^{n-1} t_m^n =
\frac{t_m}{1+t_m}, \quad \text{and}\quad \sum_{n=1}^\infty (-1)^{n-1}\frac{t_m^n}{n}=
\log(1+t_m),
\end{align*}
concluding that
\[
I
=
16\sum_{m=1}^\infty
\mleft[
\frac{t_m}{(1+t_m)^2}
+\frac{t_m}{\pi q_m(1+t_m)}
+\frac{\log(1+t_m)}{\pi^2 q_m^2}
\mright].
\]
Since $\E[\min\{R_1, R_2\}^2]
=
2\E[R^2]-2I$, and $\E[R^2]=4\log 2$, we conclude~\eqref{eq:bound_r_squared}, i.e., that
\begin{align*}
    \E[\min\{R_1, R_2\}^2]
&=
8\log 2
-64\sum_{m=1}^\infty A_m, \quad \text{ where}\\
A_m &\coloneqq 
\frac{e^{-(2m-1)\pi}}{(1+e^{-(2m-1)\pi})^2}
+\frac{e^{-(2m-1)\pi}}{\pi(2m-1)(1+e^{-(2m-1)\pi})}
+\frac{\log(1+e^{-(2m-1)\pi})}{\pi^2(2m-1)^2}.
\end{align*}

Since each $A_m>0$, taking any finite truncation of $I$ yields an upper bound, and hence~\eqref{eq:approx_min_suqared} follows:
\[
\E[\min\{R_1, R_2\}^2]
\le
8\log 2-64\sum_{m=1}^5 A_m=1.8798749620\ldots 
<1.8798749621.
\]

Finally, we will now conclude the variance bound~\eqref{eq:var_bound}. Indeed, using lower bound for $\E[\sfr]$ from~\cite{CPS} we get $\Var(\sfr) = \E[\sfr^2] - \E[\sfr]^2 \le \E[\sfr^2] - (0.393)^2$. Hence, together with~\eqref{eq:approx_min_suqared}, it follows that 
\begin{equation*}
     \Var(\sfr) <  \frac{1.8798749621}{4} - (0.393)^2 = 0.315519740525.\qedhere
\end{equation*}
\end{proof}

\begin{proof}[Proof of Theorem~\ref{thm:up_low_var_inr}]
The proof of the lower bound follows directly from Lemma~\ref{lem:lb_var_inr}, and the proof of the upper bound follows directly from Proposition~\ref{prop:min-range-square}. 
\end{proof}

\subsection{Two--sided bounds on $\Var(\Theta^{\sfr})$}
In this subsection we prove Theorem~\ref{thm:up_low_var_inv_inr} which claims $5.2 \cdot 10^{-11} \le \Var(\Theta^{\sfr}) \le 709.599$.

\begin{remark}
    Monte Carlo estimation of the value $\Var(\Theta^{\sfr})$ is $2.234$.
\end{remark}

\begin{lemma}\label{lem:lb_var_inv_inr}
    For every $\alpha \in (0, 1)$ it holds that
    \begin{equation*}
        \Var(\Theta^\sfr) \ge \alpha^2 \big( \E[\Theta^\sfr] \big)^2 \mleft[ 1 - \mleft( 1 - F_R\mleft( \frac{2}{\sqrt{(1+\alpha) \E[\Theta^\sfr]}} \mright) \mright)^2 \mright].
    \end{equation*}
    Furthermore, taking only the first term of the series for $F_R(x)$, setting $\alpha = 0.09$, and using bounds from Theorem~\ref{thm:up_low_exp_inv_inr}, we obtain
    \begin{equation*}
        \Var(\Theta^\sfr) \ge 5.2 \cdot 10^{-11}.
    \end{equation*}
\end{lemma}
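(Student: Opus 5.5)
Since $\Theta^{\sfr}\ge 0$ has positive mean, we apply \eqref{eq:cheb_with_alpha} to $X=\Theta^{\sfr}$ and keep only the upper-tail term:
\[
\Var(\Theta^{\sfr})\ge \alpha^2\big(\E[\Theta^{\sfr}]\big)^2\,\p\big(\Theta^{\sfr}\ge (1+\alpha)\E[\Theta^{\sfr}]\big).
\]
We use the upper tail because small inradius at a fixed time is easy to force through the ranges. We then convert this event about the inverse process into one about the inradius at a deterministic time. Set $t_\alpha:=(1+\alpha)\E[\Theta^{\sfr}]$. If $\sfr(t_\alpha)\le 1$, then by monotonicity of $\sfr(\cdot)$ and the definition \eqref{eq:gen_inv_proc}, we have $\sfr(s)\le 1$ for all $s\le t_\alpha$, and hence $\Theta^{\sfr}\ge t_\alpha$. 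Therefore
\[
\p(\Theta^{\sfr}\ge t_\alpha)\ge \p(\sfr(t_\alpha)\le 1).
\]

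Next we use Brownian scaling, $\sfr(t)\stackrel{d}{=}\sqrt t\,\sfr$, to get $\p(\sfr(t_\alpha)\le 1)=\p(\sfr\le 1/\sqrt{t_\alpha})$. We then invoke the range bound of Lemma~\ref{lem:r_bound_range}, exactly as in the proof of Lemma~\ref{lem:lb_var_inr}. With $x=1/\sqrt{t_\alpha}$, inequality \eqref{eq:var_r-lb-step2} gives
\[
\p(\sfr\le x)\ge 1-\big(1-F_R(2x)\big)^2 .
\]
Combining the three displays yields the stated inequality with argument $2/\sqrt{(1+\alpha)\E[\Theta^{\sfr}]}$.

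For the numerical part, the right-hand side depends on the unknown $\E[\Theta^{\sfr}]$ in two places. The factor $(\E[\Theta^{\sfr}])^2$ is bounded below using $\E[\Theta^{\sfr}]\ge 2.61378$ from Theorem~\ref{thm:up_low_exp_inv_inr}. The bracket is increasing in $F_R$, and $F_R$ is increasing in its argument, which decreases in $\E[\Theta^{\sfr}]$. So we bound the bracket below by inserting the upper bound $\E[\Theta^{\sfr}]\le 18.7460$ into the argument. Finally, every term of \eqref{eq:cdf-range} is positive, so truncating to the $m=1$ term gives a valid lower bound on $F_R$. With $\alpha=0.09$ this is an explicit number: the argument is $2/\sqrt{1.09\cdot 18.746}\approx 0.442$, and the computation should give $\ge 5.2\cdot 10^{-11}$.

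The main point needing care is the monotonicity bookkeeping. We must check that replacing $\E[\Theta^{\sfr}]$ by its lower bound in the prefactor and by its upper bound inside $F_R$ both move the expression in the right direction. We must also confirm the numerical evaluation, which is tiny because $F_R$ at $\approx 0.44$ is dominated by $e^{-\pi^2/(2x^2)}$.
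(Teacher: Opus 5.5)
Your proposal is correct and follows essentially the paper's argument: upper-tail Chebyshev, reduction to the event that the ranges (in your version, the inradius at the deterministic time $(1+\alpha)\E[\Theta^{\sfr}]$) are still small, Brownian scaling, the bound $\sfr\le\min\{R_1,R_2\}/2$, then the lower bound $2.61378$ in the prefactor and the upper bound $18.7460$ inside $F_R$. The only cosmetic difference is that the paper passes through the range hitting time from \eqref{eq:Thetar_lb_step1} and scales the ranges, whereas you scale $\sfr$ and reuse \eqref{eq:var_r-lb-step2}; your monotonicity bookkeeping is right, and the one-term evaluation indeed gives about $5.201\cdot 10^{-11}$.
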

\begin{proof}
    From~\eqref{eq:cheb_with_alpha} we clearly have
\begin{equation}\label{eq:var-theta-step1}
    \Var(\Theta^{\sfr}) \ge \alpha^2 (\E[\Theta^{\sfr}])^2 \p \mleft( \Theta^{\sfr} \ge (1 + \alpha) \E[\Theta^{\sfr}] \mright).
\end{equation}
From~\eqref{eq:Thetar_lb_step1} we know that $\Theta^{\sfr} \ge \inf\{t\ge 0 : \min\{R_1(t), R_2(t)\} > 2\}$. Hence,
\begin{align*}
    \p(\Theta^{\sfr} \ge u)
    & \ge \p(\min\{R_1(u), R_2(u)\} \le 2) = 1 - \p(\min\{R_1(u), R_2(u)\} > 2) \\
    & = 1 - \p(R_1(u) > 2)^2 = 1 - \p(\sqrt{u}R_1 > 2)^2 = 1 - \p \mleft( R_1 > \frac{2}{\sqrt{u}} \mright)^2, \quad \text{for all } u > 0,
\end{align*}
where we used that coordinate-wise range processes are independent and identically distributed, and the fact that standard Brownian scaling gives the same scaling for the corresponding range process. Therefore,
\begin{equation}\label{eq:var-theta-step2}
    \p(\Theta^{\sfr} \ge u) \ge 1 - \mleft( 1 - F_R\mleft( \frac{2}{\sqrt{u}} \mright) \mright)^2,
\end{equation}
where $F_R$ again denotes the cumulative distribution function from~\eqref{eq:cdf-range}. Setting $u=(1+\alpha)\E[\Theta^\sfr]$ in~\eqref{eq:var-theta-step2}, and plugging this into~\eqref{eq:var-theta-step1} yields
\begin{equation*}
    \Var(\Theta^{\sfr}) \ge \alpha^2 \big( \E[\Theta^\sfr] \big)^2 \mleft[ 1 - \mleft( 1 - F_R \mleft( \frac{2}{\sqrt{(1+\alpha)\E[\Theta^\sfr]}} \mright) \mright)^2 \mright].
\end{equation*}
Using the precise bounds from Theorem~\ref{thm:up_low_exp_inv_inr} we get
\begin{equation}\label{eq:var-theta-final}
    \Var(\Theta^{\sfr}) \ge \alpha^2 (2.61378)^2 \mleft[ 1 - \mleft( 1 - F_R \mleft( \frac{2}{\sqrt{18.7460(1+\alpha)}} \mright) \mright)^2 \mright].
\end{equation}

Recall that all the terms in the explicit series representation of $F_R$ are positive, and therefore truncating the series after finitely many terms yields a lower bound on the distribution function. Since the right-hand side of~\eqref{eq:var-theta-final} is monotone increasing in $F_R$, the resulting expression remains a valid lower bound for the variance. Optimizing the right-hand side of~\eqref{eq:var-theta-final} over $\alpha\in(0,1)$ and retaining only the first term of the series (the second term already being smaller than $10^{-90}$ at the optimizer $\alpha \approx 0.09$) gives the explicit bound
\begin{align*}
        \Var(\Theta^{\sfr})
    & \ge (0.09)^2 \cdot (2.61378)^2 \mleft[ 1 - \mleft( 1 - 8 \mleft( \frac{18.7460\cdot 1.09}{4} + \frac{1}{\pi^2} \mright) \exp{\mleft( -\frac{\pi^2\cdot 18.7460\cdot 1.09}{8}\mright)} \mright)^2 \mright] \\
    & \ge 5.2 \cdot 10^{-11}. 
\end{align*}
\end{proof}

Similar to Section~\ref{sec:up_bound_inrad_constr}, we will use a two-stage construction to find an upper bound for the variance of the inverse inradius. To prove such an upper bound, we consider the second moment objective function given by
\begin{equation}\label{eq:defini_object_func_var_inra}
        \Psi_2(m,b,a) \coloneqq  16a^4 M_\Theta(2)
+
8a^2 M_\Theta(1)\frac{m^2 b^2+a^2}{m^2-1}  +
\frac{(m^2 b^2+a^2)(5m^2-1)}
{3(m^4-6m^2+1)(m^2-1)}
\mleft(
\frac{m^2(m^2-5)}{m^2-1}b^2
+
a^2
\mright),
\end{equation} over the feasible region $\mathfrak{F}(2)$, where $M_\Theta(1)\coloneqq \E[\min\{\Theta_1(1),\Theta_2(1)\}]$ and $M_\Theta(2)\coloneqq \E[\min\{\Theta_1(1),\Theta_2(1)\}^2]$.

\begin{lemma}\label{lem:ub_var_inv_inr}
Consider the objective function $\Psi_2$ from~\eqref{eq:defini_object_func_var_inra} over the feasible region $\mathfrak{F}(2)$. Then, it follows that
\[
\E[(\Theta^\sfr)^2] \le \inf_{(m,b,a) \in \mathfrak{F}(2)}\Psi_2(m,b,a)\le 716.430.
\] Specifically, it follows that 
\begin{equation}\label{eq:var_bound_theta_sfr}
    \Var(\Theta^\sfr) \le \inf_{(m,b,a) \in \mathfrak{F}(2)}\Psi_2(m,b,a) - \mleft(2 + \frac{8}{\pi} \int_0^{\infty} \mleft( 1 - \frac{4}{(\cos t + \cosh t)^2} \mright) \frac{1}{t^3} \D t\mright)^2 \le 709.599.
\end{equation}
\end{lemma}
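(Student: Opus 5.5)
Fix $(m,b,a)\in\mathfrak F(2)$. Since $\cot(\pi/8)=1+\sqrt2>1$, we have $\mathfrak F(2)\subset\mathfrak F(1)$, so Proposition~\ref{prop:two_stage_inradius_upper_bound} applies and gives $\Theta^{\sfr}\le T_a+\tau_{m,b,a}$ a.s., with $T_a=\min\{\Theta_1(2a),\Theta_2(2a)\}$. Both sides are nonnegative, so we may square and take expectations:
\[
\E[(\Theta^\sfr)^2]\le \E[T_a^2]+2\E[T_a\,\tau_{m,b,a}]+\E[\tau_{m,b,a}^2].
\]
We then evaluate the three terms separately.

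\textbf{Step 1: the inverse range term.} By the scaling $\Theta_i(\lambda y)\stackrel{d}{=}\lambda^2\Theta_i(y)$, applied jointly to the independent pair, $T_a\stackrel{d}{=}4a^2\min\{\Theta_1,\Theta_2\}$. Hence $\E[T_a]=4a^2M_\Theta(1)$ and $\E[T_a^2]=16a^4M_\Theta(2)$.

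\textbf{Step 2: the cross term.} In the proof of Proposition~\ref{prop:two_stage_inradius_upper_bound}, the strong Markov property and rotational invariance showed that $\widetilde{\bm W}$ is a planar Brownian motion started at $(0,a)$ and independent of $\mathcal F_{T_a}$. Therefore $\tau_{m,b,a}$ is independent of $T_a$ and has the law of $\tau_{m,b}$ under $\p_{(0,a)}$. This gives
\[
\E[T_a\tau_{m,b,a}]=\E[T_a]\,\E_{(0,a)}[\tau_{m,b}]=4a^2M_\Theta(1)\,\frac{m^2b^2+a^2}{m^2-1}
\]
by Lemma~\ref{lem:torsion}.

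\textbf{Step 3: the exit time term.} The condition $m>1+\sqrt2$ is built into $\mathfrak F(2)$, so Lemma~\ref{lem:second_moment} applies at $(x,y)=(0,a)$. It gives
\[
\E_{(0,a)}[\tau_{m,b}^2]=\frac{m^2b^2+a^2}{3(m^2-1)(m^4-6m^2+1)}\Bigl(\tfrac{m^2(m^2-5)}{m^2-1}(5m^2-1)b^2+(5m^2-1)a^2\Bigr).
\]
This is exactly the last summand of $\Psi_2$. Adding the three terms yields $\E[(\Theta^\sfr)^2]\le\Psi_2(m,b,a)$, and taking the infimum over $\mathfrak F(2)$ gives the first inequality.

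\textbf{Step 4: numerics.} To get the numerical bound $716.430$ I would exhibit an explicit feasible point with $m$ somewhat above $1+\sqrt2$, with $a$ and $b$ tuned near the boundary of the $Q$ condition. Feasibility is checked as in Lemma~\ref{lem:bound_first_moment_Inverse_range}: evaluate $Q_{m,b,a}$ at $x=b$, at its critical points in $(b,\infty)$, and examine the leading coefficient as $x\to\infty$.

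\textbf{Main obstacle.} I expect this step to be the hardest: $\Psi_2$ requires an upper bound on $M_\Theta(2)=\E[\min\{\Theta_1,\Theta_2\}^2]$. I would first try to write
\[
M_\Theta(2)=\int_0^\infty 2t\,\p(\Theta_1>t)^2\,\D t
\]
and bound $\p(\Theta_1>t)=\p(R(t)\le1)=F_R(t^{-1/2})$ using the series~\eqref{eq:cdf-range}. Alternatively I would use the Laplace transform from~\cite{Imhof}, analogously to~\eqref{eq:inv_range_first_mom}, and evaluate numerically with a rigorous truncation of the series. A crude fallback is $M_\Theta(2)\le\E[\Theta_1\Theta_2]=1/4$, but this loses accuracy.

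\textbf{Step 5: the variance.} Finally, $\Var(\Theta^\sfr)=\E[(\Theta^\sfr)^2]-\E[\Theta^\sfr]^2$. Lemma~\ref{lem:lb_exp_inv_inr} bounds $\E[\Theta^\sfr]$ from below by the stated integral expression, which is at least $2.61378$. Subtracting its square from the infimum gives~\eqref{eq:var_bound_theta_sfr} and the value $709.599$.
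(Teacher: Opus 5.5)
Your proposal follows the paper's proof: square the two-stage bound from Proposition~\ref{prop:two_stage_inradius_upper_bound}, use scaling for $T_a$ and independence of $T_a$ and $\tau_{m,b,a}$ for the cross term, apply Lemmas~\ref{lem:torsion} and~\ref{lem:second_moment} at $(0,a)$, evaluate at an explicit feasible point (the paper uses $m=2.706494$, $a=3.330452$, $b=2.198179$), and subtract the square of the lower bound from Lemma~\ref{lem:lb_exp_inv_inr}. Your route to $M_\Theta(2)$ via $\int_0^\infty 2t\,\p(\Theta_1>t)^2\,\D t$ and the theta series is exactly Lemma~\ref{lem:est_sec_mom_invers_range}, with one caveat: truncating these positive series only bounds $M_\Theta(2)$ from below, so you need explicit tail control or, as the paper does, to sum the inner series in closed form to get $M_\Theta(2)=1/3-\mathcal C$ with $\mathcal C$ having positive summands, which yields $M_\Theta(2)\le 0.149321$.
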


\begin{proof}
Recall the definition of $\tau_{m,b,a}$ from Proposition~\ref{prop:two_stage_inradius_upper_bound}, which also implies, for all $(m,b,a) \in \mathfrak{F}(2)$, that
\begin{equation}
    \E[(\Theta^\sfr)^2]\le \E\mleft[ \mleft( 4a^2 \min\{\Theta_1,\Theta_2\}+\tau_{m,b,a} \mright)^2\mright]
    = 16a^4 M_\Theta(2)+\E\mleft[\tau_{m,b,a}^2\mright]+8a^2M_\Theta(1)\E\mleft[\tau_{m,b,a}\mright],
\end{equation} where,  $\E[\tau_{m,b,a}]=(m^2b^2+a^2)/(m^2-1)$ follows from Lemma~\ref{lem:torsion}, and Lemma~\ref{lem:second_moment} moreover implies that
\begin{equation*}
\E\mleft[\tau_{m,b,a}^2\mright] =\frac{\mleft(\frac{m^2 b^2+a^2}{m^2-1}\mright)}{3(m^4-6m^2+1)}\mleft(\frac{m^2(m^2-5)}{m^2-1}(5m^2-1)b^2+(5m^2-1)a^2\mright).
\end{equation*} Hence, it follows directly, that $\E[(\Theta^\sfr)^2] \le \inf_{(m,b,a) \in \mathfrak{F}(2)}\Psi_2(m,b,a)$.

Now, we use $M_\Theta(1)\le 0.346555$ and $M_\Theta(2)\le 0.149321$. The approximation of $M_\Theta(2)$ follows from Lemma~\ref{lem:est_sec_mom_invers_range} below, while the approximation of $M_\Theta(1)$ follows from~\eqref{eq:inv_range_first_mom} (see also~\cite[Lemma 1.7]{CPS}). Taking $m = 2.706494$, $a = 3.330452$ and $b = 2.198179$ together with the bounds on $M_{\Theta}(i)$ gives the desired bound. It is straightforward to check that this point is indeed in the feasibility region $\mathfrak{F}(2)$, by checking the value of $Q_{m, b, a}(x)$ at $x = b$, the two critical points in $(b, \infty)$, and as $x \to \infty$.

Finally, applying this upper bound for $\E[(\Theta^\sfr)^2]$ together with the lower bound of $\E[\Theta^\sfr]$ in Lemma~\ref{lem:lb_exp_inv_inr}, we conclude the bound in~\eqref{eq:var_bound_theta_sfr}.
\end{proof}

\begin{lemma}\label{lem:est_sec_mom_invers_range}
Let \(W_i=(W_i(t):t\ge  0)\) be a standard one-dimensional Brownian motion, let $R_i$ be its range, and $\Theta_i$ be the inverse range. Then,
\[
\E\!\mleft[\min\{\Theta_1,\Theta_2\}^2\mright]
=
\frac13-\mathcal C, \quad \text{where}\quad \mathcal C
\coloneqq 
\frac{16}{\pi^6}
\sum_{\substack{a\ge  1\\ a\ {\rm odd}}}
\mleft[
\frac{20u_a}{a^6}
+
\frac{12\pi v_a u_a}{a^5}
+
\frac{\pi^2(2u_a-3u_a^2)}{a^4}
\mright],
\] with $u_a=\sech^2(\pi a/2)$, and $v_a=\tanh(\pi a/2)$. Moreover, every summand in the series defining \(\mathcal C\) is positive, yielding for any $N \in \N$, that 
\begin{equation*}
    \E\!\mleft[\min\{\Theta_1,\Theta_2\}^2\mright]
\le 
\frac13-\mathcal C_N , \quad \text{for}\quad \mathcal C_N
\coloneqq 
\frac{16}{\pi^6}
\sum_{\substack{1\le  a\le  N\\ a\ {\rm odd}}}
\mleft[
\frac{20u_a}{a^6}
+
\frac{12\pi v_a u_a}{a^5}
+
\frac{\pi^2(2u_a-3u_a^2)}{a^4}
\mright].
\end{equation*} Numerically, $\E\!\mleft[\min\{\Theta_1,\Theta_2\}^2\mright]
=
0.1493203187\ldots 
$.
\end{lemma}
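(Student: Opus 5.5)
We would start from the tail representation $\E[X^2]=\int_0^\infty 2t\,\p(X>t)\D t$ with $X=\min\{\Theta_1,\Theta_2\}$. By independence, $\p(X>t)=\p(\Theta_1>t)^2$. Since $\{\Theta_1>t\}=\{R_1(t)\le 1\}$ and the range scales like $\sqrt t$, this gives $\p(\Theta_1>t)=F_R(t^{-1/2})$.

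For the tail of $\Theta_1$ we would use the small-range (eigenfunction) expansion rather than Feller's large-$x$ series. From Imhof's Laplace transform, $\E[e^{-\lambda\Theta_1}]=\sech^2(\sqrt{\lambda/2})$, the poles lie at $\lambda=-(2k-1)^2\pi^2/2$. These are double poles, so the tail has the form
\[
\p(\Theta_1>t)=\sum_{k\ge1}(\alpha_k+\gamma_k t)\,e^{-\lambda_k t},\qquad \lambda_k=\tfrac{(2k-1)^2\pi^2}{2}.
\]
This is equivalent to \eqref{eq:cdf-range} at $x=t^{-1/2}$, namely $\alpha_k=8/((2k-1)^2\pi^2)$ and $\gamma_k=8$.

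Squaring and integrating termwise over $t$ would give a double sum over odd $a,b$ of elementary integrals of $t(\alpha+\gamma t)(\alpha'+\gamma' t)e^{-(\lambda_a+\lambda_b)t}$. These are rational functions of $a^2,b^2$. Termwise integration is justified by absolute convergence, since the series converge uniformly for $t\ge t_0>0$. Near $t=0$ we would not expand; instead we would split as $\p(\Theta_1>t)^2=\p(\Theta_1>t)-\p(\Theta_1>t)\p(\Theta_1\le t)$. The diagonal part contributes $\E[\Theta_1^2]$, which comes from the Laplace transform: $\E[\Theta_1^2]=\tfrac13$ after checking the expansion $\sech^2(\sqrt{\lambda/2})=1-\lambda/2+\lambda^2/6-\cdots$. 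This accounts for the constant $\tfrac13$.

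The cross term is $\int 2t\,\p(\Theta_1>t)\p(\Theta_1\le t)\D t$. We would write it as $\E$ of a functional of an independent copy, i.e.\ $2\int t\,\p(\Theta_1>t)\,\p(\Theta_2\le t)\D t=\E[g(\Theta_2)]$ with
\[
g(s)=\int_s^\infty 2t\,\p(\Theta_1>t)\D t,
\]
which is an explicit finite combination of $e^{-\lambda_a s}$ times polynomials in $s$. Evaluating $\E[e^{-\lambda_a\Theta_2}]$ and its $\lambda$-derivatives via Imhof's formula at $\lambda=\lambda_a$ gives $\sech^2(\pi a/2)=u_a$. Its derivatives produce the factors $\tanh(\pi a/2)=v_a$ and $u_a^2$. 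This collapses the double sum to a single sum over odd $a$, yielding $\mathcal C$ with the stated coefficients $20/a^6$, $12\pi v_a/a^5$, and $\pi^2(2u_a-3u_a^2)/a^4$. The prefactor $16/\pi^6$ arises from $\alpha_a\gamma$ and $\gamma/\lambda_a^2$ combinations.

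Positivity of each summand is immediate except for the last term. There $2u_a-3u_a^2=u_a(2-3u_a)$, and $u_a\le\sech^2(\pi/2)<2/3$, so that term is positive too. The truncation bound then follows, and the numerics come from summing a few terms, which converge geometrically.

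The main obstacle is bookkeeping in the $g(s)$ step: differentiating $\sech^2(\sqrt{\lambda/2})$ up to second order in $\lambda$ at the negative points $\lambda=-\lambda_a$ requires continuing analytically. There $\sqrt{\lambda/2}=i\pi a/2$, so $\sech$ becomes $\sec$, and one must carefully track signs. We expect it may be cleaner to evaluate $\E[\Theta_2^j e^{\lambda_a\Theta_2}]$ directly. Integrability is not guaranteed: $\p(\Theta_2>t)\sim C t e^{-\lambda_1 t}$, so we must check $\lambda_a$ against $\lambda_1$. For $a=1$ the factor $e^{-\lambda_1 s}$ paired with $e^{+\lambda_1 s}$ makes the naive identity diverge. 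If that happens, we will instead integrate the double series $\sum_{a,b}$ directly and sum over $b$ in closed form using partial fractions, i.e.\ $\sum_b 1/(a^2+b^2)$-type sums giving $\tanh$ and $\sech^2$ via Mittag-Leffler expansions, which is likely the paper's actual route.
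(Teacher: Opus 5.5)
Your main line is correct, and it takes a genuinely different route from the paper's.

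\textbf{The paper's route.} The paper squares the survival series $\p(\Theta_1>t)=\sum_{a\ \mathrm{odd}}(8t+4/\lambda_a)e^{-\lambda_a t}$. It integrates termwise to get a double sum over odd $(a,b)$ of rational functions of $a^2,b^2$. It then collapses the $b$-sum using partial fractions and the Mittag-Leffler sums $A_k(a)=\sum_{b\ \mathrm{odd}}(a^2+b^2)^{-k}$. The constant $\tfrac13$ only appears at the end, as $\tfrac{320}{\pi^6}\sum_{a\ \mathrm{odd}}a^{-6}$.

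\textbf{Your route.} Your splitting $\p(\Theta_1>t)^2=\p(\Theta_1>t)-\p(\Theta_1>t)\p(\Theta_2\le t)$ identifies $\tfrac13=\E[\Theta_1^2]$ and $\mathcal C=\E[g(\Theta_2)]$ from the outset. Imhof's closed form $L(\lambda)=\sech^2(\sqrt{\lambda/2})$ then replaces the $A_k$ computations, which in effect rebuild the partial-fraction expansion of $L$. The bookkeeping does close:
\[
g(s)=\sum_{a\ \mathrm{odd}}e^{-\lambda_a s}\Bigl(\frac{16s^2}{\lambda_a}+\frac{40s}{\lambda_a^2}+\frac{40}{\lambda_a^3}\Bigr).
\]
Use $\E[e^{-\lambda_a\Theta_2}]=u_a$, $\E[\Theta_2e^{-\lambda_a\Theta_2}]=-L'(\lambda_a)=u_av_a/(\pi a)$, and $\E[\Theta_2^2e^{-\lambda_a\Theta_2}]=L''(\lambda_a)=u_a(2-3u_a)/(2\pi^2a^2)+u_av_a/(\pi^3a^3)$. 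With these, the $a$-th term of $\E[g(\Theta_2)]$ is exactly the $a$-th summand of $\mathcal C$.

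\textbf{What your route buys.} Positivity comes for free: each summand of $\mathcal C$ is the expectation of a positive random variable, so the estimate $\sech^2(\pi/2)<2/3$ is not needed. All interchanges of sums, integrals and expectations follow from Tonelli, because every term of the survival series is nonnegative. So you need neither uniform convergence on $[t_0,\infty)$ nor any special treatment near $t=0$.

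\textbf{The misdiagnosed obstacle.} Your last paragraph describes an obstacle that does not exist, and you should drop it. Since $\lambda_a=\pi^2a^2/2>0$ and $g(s)$ carries $e^{-\lambda_a s}$, you need $L$ and its derivatives at $+\lambda_a$, where $\sqrt{\lambda_a/2}=\pi a/2$ is real. Your previous paragraph already says exactly this. The points $-\lambda_a$ are the double poles of $L$ and only govern the eigenfunction expansion of the tail. No factor $e^{+\lambda_1 s}$ ever appears, and $\E[\Theta_2^je^{-\lambda_a\Theta_2}]$ is finite for every $a$, including $a=1$. You need no analytic continuation and no fallback to the double-sum route, which is the paper's proof.
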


\begin{proof}
We use the known density of \(\Theta_i\) (see~\cite[Eq.\ (1.8) \& Tab.\ 1]{theta_laws} and~\cite[Eqs.\ (20)--(22)]{CPS}), namely
\[
f_{\Theta_i}(t)
=
4\sum_{n=0}^{\infty}
\mleft((2n+1)^2\pi^2t-1\mright)
\exp\mleft(-\frac{(2n+1)^2\pi^2}{2}t\mright),
\qquad \text{for all }t>0.
\] 
Integrating term by term gives
\begin{equation}{\label{eq:survival_expansion}}
    \p(\Theta_i>t)
=
4\sum_{n=0}^{\infty}
\mleft(2t+\lambda_n^{-1}\mright)e^{-\lambda_n t}, \quad \text{ for }\lambda_n=\frac{\pi^2(2n+1)^2}{2}.
\end{equation}

Let $Y=\min\{\Theta_1,\Theta_2\}$, and since \(\Theta_1\) and \(\Theta_2\) are independent, it follows that $\p(Y>t)=\p(\Theta_1>t)^2$. For every nonnegative random variable \(X\), $\E[X^2]
=
2\int_0^\infty t \p(X>t)\D t$.
Therefore, $\E[Y^2] = 2\int_0^\infty t \p(X>t)^2\D t$. Substituting the survival expansion~\eqref{eq:survival_expansion} and applying Tonelli's theorem, gives
\[
\E[\min\{\Theta_1,\Theta_2\}^2]
=
32
\sum_{m,n=0}^\infty 
\int_0^\infty
t
\mleft(2t+\lambda_m^{-1}\mright)
\mleft(2t+\lambda_n^{-1}\mright)
e^{-(\lambda_m+\lambda_n)t}
\D t .
\]
Using the Gamma integral $\int_0^\infty t^k e^{-\alpha t}\D t
= k!/\alpha^{k+1}$, for $k=1,2,3$ and $\alpha>0$, we obtain
\begin{equation}\label{eq:bound_lambda_sum_Y_expansion}
    \E[\min\{\Theta_1,\Theta_2\}^2]
=
\sum_{m,n=0}^\infty
\mleft[
\frac{768}{(\lambda_m+\lambda_n)^4}
+
\frac{128(\lambda_m^{-1}+\lambda_n^{-1})}{(\lambda_m+\lambda_n)^3}
+
\frac{32}{\lambda_m\lambda_n(\lambda_m+\lambda_n)^2}
\mright].
\end{equation}

Now write $a=2m+1$ and $b=2n+1$. Thus, \(a,b\) range over the positive odd integers and let $\lambda_m=\pi^2a^2/2$, $\lambda_n=\pi^2b^2/2$. A direct substitution into~\eqref{eq:bound_lambda_sum_Y_expansion}, gives
\begin{equation}\label{eq:bound_lambda_sum_Y_expansion_2}
    \E[\min\{\Theta_1,\Theta_2\}^2]
=
\frac1{\pi^8}
\sum_{\substack{a,b\ge  1\\ a,b\ {\rm odd}}}
\mleft[
\frac{12288}{(a^2+b^2)^4}
+
\frac{2048(a^{-2}+b^{-2})}{(a^2+b^2)^3}
+
\frac{512}{a^2b^2(a^2+b^2)^2}
\mright].
\end{equation}

For \(k\ge  1\), define $A_k(a)=
\sum_{b\ge  1\,:\, b\ {\rm odd}}
(a^2+b^2)^{-k}$, and note that $\sum_{b\ge  1\,:\, b\ {\rm odd}} b^{-2}
= (1-2^{-2})\zeta(2)=\pi^2/8$. For fixed \(a\), the inner sum of~\eqref{eq:bound_lambda_sum_Y_expansion_2} over \(b\), may be written as
\[
12288A_4(a)
+
\frac{2048}{a^2}A_3(a)
+
2048
\sum_{\substack{b\ge  1\\ b\ {\rm odd}}}
\frac1{b^2(a^2+b^2)^3}
+
\frac{512}{a^2}
\sum_{\substack{b\ge  1\\ b\ {\rm odd}}}
\frac1{b^2(a^2+b^2)^2}.
\] Next, using the identity
\[
\frac1{b^2(a^2+b^2)^k}
=
\frac1{a^2}
\mleft[
\frac1{b^2(a^2+b^2)^{k-1}}
-
\frac1{(a^2+b^2)^k}
\mright], \quad \text{ for }k=2,3, \text{ and for all }b\ge 1 \text{ odd}, 
\]
implies that 
\[
\sum_{\substack{b\ge  1\\ b\ {\rm odd}}}
\frac1{b^2(a^2+b^2)^2}
=
\frac{\pi^2/8-A_1(a)}{a^4}
-
\frac{A_2(a)}{a^2}, \quad \text{and}\quad 
\sum_{\substack{b\ge  1\\ b\ {\rm odd}}}
\frac1{b^2(a^2+b^2)^3}
=
\frac{\pi^2/8-A_1(a)}{a^6}
-
\frac{A_2(a)}{a^4}
-
\frac{A_3(a)}{a^2}.
\]
Substitution cancels the \(A_3(a)\)-terms, and~\eqref{eq:bound_lambda_sum_Y_expansion_2} reduces to
\begin{equation}\label{eq:summanda_theta_exp_a_k}
    \E[\min\{\Theta_1,\Theta_2\}^2]
=
\frac1{\pi^8}
\sum_{\substack{a\ge  1\\ a\ {\rm odd}}}
\mleft[
\frac{320\pi^2}{a^6}
-
\frac{2560A_1(a)}{a^6}
-
\frac{2560A_2(a)}{a^4}
+
12288A_4(a)
\mright].
\end{equation}

We will now evaluate \(A_k(a)\). The Mittag--Leffler expansion $\sum_{j\in\mathbb Z}((j+1/2)^2+x^2)^{-1} =
\pi\tanh(\pi x)/x$ (follows from~\cite[Eq.~4.22.3]{NIST:DLMF} applied at $z=1/2+ix$ and then taking the imaginary part), implies, after taking \(x=a/2\), that $A_1(a)= \pi\tanh(\pi a/2)/(4a)$. Moreover, differentiating term by term gives the recurrence $A_{k+1}(a)
=
-(2ka)^{-1}\tfrac{d}{da}A_k(a)$. Put $v_a=\tanh(\pi a/2)$, and $u_a=\sech^2(\pi a/2)$, then $\tfrac{d}{da}v_a =\pi u_a/2$ and $\tfrac{d }{da} u_a= -\pi u_av_a$. Using the recurrence for $A_k$, it follows that 
\[
A_2(a)
=
\frac{\pi}{16a^3}
\mleft(2v_a-\pi a u_a\mright), \quad \text{and}\quad A_4(a)
=
\frac{\pi}{768a^7}
\mleft(
60v_a
-
30\pi a u_a
-
12\pi^2a^2u_av_a
-
2\pi^3a^3u_a
+
3\pi^3a^3u_a^2
\mright).
\]
Substituting these expressions into the summands from~\eqref{eq:summanda_theta_exp_a_k}, yields the simplification
\[
\frac{320\pi^2}{a^6}
-
\frac{2560A_1(a)}{a^6}
-
\frac{2560A_2(a)}{a^4}
+
12288A_4(a)
=
\frac{320\pi^2}{a^6}
-
16\pi^2
\mleft[
\frac{20u_a}{a^6}
+
\frac{12\pi v_a u_a}{a^5}
+
\frac{\pi^2(2u_a-3u_a^2)}{a^4}
\mright].
\]
Hence, by~\eqref{eq:summanda_theta_exp_a_k}, it follows that
\[
\E[\min\{\Theta_1,\Theta_2\}^2]
=
\frac{320}{\pi^6}
\sum_{\substack{a\ge  1\\ a\ {\rm odd}}}\frac1{a^6}
-
\frac{16}{\pi^6}
\sum_{\substack{a\ge  1\\ a\ {\rm odd}}}
\mleft[
\frac{20u_a}{a^6}
+
\frac{12\pi v_a u_a}{a^5}
+
\frac{\pi^2(2u_a-3u_a^2)}{a^4}
\mright]=\sum_{\substack{a\ge  1\\ a\ {\rm odd}}}\frac1{a^6}
-\mathcal C.
\]
By definition of the Riemann zeta function~\eqref{eq_riemann_zeta_function_def}, it holds that
\[
\frac{320}{\pi^6} \sum_{\substack{a\ge  1\\ a\ {\rm odd}}}\frac1{a^6} = \frac{320}{\pi^6} \mleft(\zeta(6)-2^{-6}\sum_{n=1}^\infty \frac{1}{n^6}\mright)
=
\frac{320}{\pi^6}\mleft(1-2^{-6}\mright)\zeta(6)
=
\frac{320}{\pi^6} \mleft(\frac{63}{64}\cdot \frac{\pi^6}{945}\mright)=\frac{1}{3}.
\]
This proves $\E[\min\{\Theta_1,\Theta_2\}^2]
=
1/3-\mathcal C$, which was the first result of the statement. 

It remains to prove that every summand in the definition of \(\mathcal C\) is
positive. Fix an odd integer \(a\ge  1\). Recall that $v_a=\tanh(\pi a/2)$, and $u_a=\sech^2(\pi a/2)$, for which it holds that $0<u_a\le  \sech^2(\pi/2)$ and $0 < v_a <1$, since \(a\ge  1\). Moreover, $\sech^2(\pi/2)<2/3$, implying directly, that $0<u_a<2/3$. Consequently $2u_a-3u_a^2=u_a(2-3u_a)>0$. Hence, each of the three terms $20u_a/a^6$, $12\pi v_a u_a/a^5$, and $\pi^2(2u_a-3u_a^2)/a^4$ is strictly positive, implying that each summand of $\mathcal C$ is strictly positive as claimed. Since each summand is strictly positive, and the series $\mathcal C$ is absolutely convergent, it follows directly for all $N \in \N$, that $\mathcal C_N\le  \mathcal C$, and hence $\E\!\mleft[\min\{\Theta_1,\Theta_2\}^2\mright]
\le 1/3-\mathcal C \le 
1/3-\mathcal C_N$.
\end{proof}

\begin{proof}[Proof of Theorem~\ref{thm:up_low_var_inv_inr}]
    The proof of the lower bound follows directly from Lemma~\ref{lem:lb_var_inv_inr}, and the proof of the upper bound follows directly from Lemma~\ref{lem:ub_var_inv_inr}.
\end{proof}

\section*{Acknowledgements}
We thank Mateusz Kwa\'{s}nicki for suggesting the use of a hyperbola-shaped domain to compute the upper bound for $\E[\Theta^\sfr]$ that appears Theorem~\ref{thm:up_low_exp_inv_inr}.

DKB is supported by AUFF NOVA grant AUFF-E-2022-9-39. S\v{S} is supported by the \emph{Croatian Science Foundation} grant IP-2022-10-2277. This research was also funded by the European union-NextGenerationEU through the National Recovery and Resilience Plan 2021-2026 Institutional grant of University of Zagreb Faculty of Electrical Engineering and Computing (VALOR). This work was carried out within a project DIGIT.2.1.02.016 funded by the Digital, Innovation, and Green Technology Project – DIGIT Project (IBRD Loan No. 9558‑HR).

\printbibliography

\end{document}